\def\ps@pprintTitle{%
 \let\@oddhead\@empty
 \let\@evenhead\@empty
 \def\@oddfoot{}%
 \let\@evenfoot\@oddfoot}
\let\Hy@linktoc\Hy@linktoc@none
\newtheorem{theorem}{Theorem}[section]
\newtheorem{corollary}[theorem]{Corollary}
\newtheorem{proposition}[theorem]{Proposition}
\newtheorem{lemma}[theorem]{Lemma}
\makeatletter \@addtoreset{equation}{section} \makeatother
\let\c@table\c@figure 
\let\ftype@table\ftype@figure 
\newcommand{\R}{\mathbb{R}}
\newcommand{\EE}{\mathbb{E}}
\newcommand{\BB}{\mathbb{B}\mathrm{ias}}
\newcommand{\VV}{\mathbb{V}\mathrm{ar}}
\newcommand{\nvert}[0]{\, \vert \, }
\newcommand{\OO}{\mathcal O}
\newcommand{\oo}{\mathrm{o}}
\newcommand{\leqdef}{\vcentcolon=}
\newcommand{\rd}{{\rm d}}
\newcommand{\ind}{\mathds{1}}
\newcommand{\e}{\varepsilon}
\newcommand{\ii}{\mathrm{i}}
\begin{document}

\begin{frontmatter}

    \title{A study of seven asymmetric kernels for the estimation of cumulative distribution functions}%

    \author[a1]{Pierre Lafaye de Micheaux}%
    \author[a2]{Fr\'ed\'eric Ouimet\texorpdfstring{\corref{cor1}}{)}}%

    \address[a1]{School of Mathematics and Statistics, UNSW Sydney, Australia.}%
    \address[a2]{PMA Department, California Institute of Technology, Pasadena, USA.}%

    \cortext[cor1]{Corresponding author}%
    \ead{ouimetfr@caltech.edu}%


    \begin{abstract}
        In \cite{Mombeni_et_al_2019_accepted}, Birnbaum-Saunders and Weibull kernel estimators were introduced for the estimation of cumulative distribution functions (c.d.f.s) supported on the half-line $[0,\infty)$.
        They were the first authors to use asymmetric kernels in the context of c.d.f.\ estimation.
        Their estimators were shown to perform better numerically than traditional methods such as the basic kernel method and the boundary modified version from \cite{MR3072469}.
        In the present paper, we complement their study by introducing five new asymmetric kernel c.d.f.\ estimators, namely the Gamma, inverse Gamma, lognormal, inverse Gaussian and reciprocal inverse Gaussian kernel c.d.f.\ estimators.
        For these five new estimators, we prove the asymptotic normality and we find asymptotic expressions for the following quantities: bias, variance, mean squared error and mean integrated squared error. A numerical study then compares the performance of the five new c.d.f.\ estimators against traditional methods and the Birnbaum-Saunders and Weibull kernel c.d.f.\ estimators from \cite{Mombeni_et_al_2019_accepted}. By using the same experimental design, we show that the lognormal and Birnbaum-Saunders kernel c.d.f.\ estimators perform the best overall, while the other asymmetric kernel estimators are sometimes better but always at least competitive against the boundary kernel method.
    \end{abstract}

    \begin{keyword}
        asymmetric kernels \sep asymptotic statistics \sep nonparametric statistics \sep Gamma \sep inverse Gamma \sep lognormal \sep inverse Gaussian \sep reciprocal inverse Gaussian \sep Birnbaum-Saunders \sep Weibull \sep bias \sep variance \sep mean squared error \sep mean integrated squared error \sep asymptotic normality
        \MSC[2010]{Primary: 62G05 Secondary: 60F05, 62G20}
    \end{keyword}

\end{frontmatter}

\vspace{-2mm}
\section{Introduction}\label{sec:intro}

In the context of density estimation, asymmetric kernel estimators were introduced by \cite{doi:10.2307/2347365} on the simplex and studied theoretically for the first time by \cite{MR1718494} on $[0,1]$ (using a Beta kernel), and by \cite{MR1794247} on $[0,\infty)$ (using a Gamma kernel).
These estimators are designed so that the bulk of the kernel function varies with each point $x$ in the support of the target density.
More specifically, the parameters of the kernel function can vary in a way that makes the mode, the median or the mean equal to $x$.
This variable smoothing allows asymmetric kernel estimators to behave better than traditional kernel estimators (see, e.g., \cite{MR79873} and \cite{MR143282}) near the boundary of the support.
Since the variable smoothing is integrated directly in the parametrization of the kernel function, asymmetric kernel estimators are also usually simpler to implement than boundary kernel methods (see, e.g., \cite{doi:10.1007/BFb0098489}, \cite{MR745507}, \cite{MR816088}, \cite{MR1130920} and \cite{MR1649872,MR1752313}).
For these two reasons, asymmetric kernel estimators are, by now, well known solutions to the boundary bias problem from which traditional kernel estimators suffer. In the past twenty years, various asymmetric kernels have been considered in the literature on density estimation:
\vspace{-1.5mm}
\begin{itemize}\setlength\itemsep{-1mm}
    \item Beta, when the target density is supported on $[0,1]$, see, e.g., \cite{MR1718494}, \cite{MR1985506}, \cite{doi:10.1016/j.jbankfin.2003.10.018}, \cite{MR2206532}, \cite{MR2756441}, \cite{MR2568128}, \cite{MR2598955}, \cite{MR2775207}, \cite{MR3333996}, \cite{MR3463548};
    \item Gamma, inverse Gamma, lognormal, inverse Gaussian, reciprocal inverse Gaussian, Birnbaum-Saunders and Weibull, when the target density is supported on $[0,\infty)$, see, e.g.,
        \cite{MR1794247}, \cite{Jin_Kawczak_2003}, \cite{MR2053071}, \cite{MR2179543}, \cite{MR2206532}, \cite{MR2454617,MR2568128,MR2756423}, \cite{MR3131281,MR3843043}, \cite{doi:10.2139/ssrn.2514882}, \cite{MR3540109}, \cite{MR3456321}, \cite{MR3648359}, \cite{MR3713468}, \cite{MR3819800}, \cite{MR2595129}, \cite{MR4096263};
    \item Dirichlet, when the target density is supported on the $d$-dimensional unit simplex, see \cite{doi:10.2307/2347365} and the first theoretical study in \cite{arXiv:2002.06956}. \vspace{-6.5mm}
\end{itemize}
The interested reader is referred to \cite{MR3821525} and Section 2 in \cite{arXiv:2002.06956} for a review of some of these papers and an extensive list of papers dealing with asymmetric kernels in other settings.

\vspace{2mm}
In contrast, there are almost no papers dealing with the estimation of cumulative distribution functions (c.d.f.s) in the literature on asymmetric kernels.
In fact, to the best of our knowledge, \cite{Mombeni_et_al_2019_accepted} seems to be the first (and only) paper in this direction if we exclude the closely related theory of Bernstein estimators.%
\footnote{In the setting of Bernstein estimators, c.d.f.\ estimation on compact sets was tackled for example by \cite{MR1910059}, \cite{MR2488150}, \cite{MR2960952}, \cite{MR2925964}, \cite{MR3473628}, \cite{MR3740720}, \cite{MR3899096} and \cite{MR3950592} in the univariate setting, and by \cite{MR2270097}, \cite{MR3474765}, \cite{doi:10.1080/03610926.2020.1734832} and \cite{arXiv:2002.07758,arXiv:2006.11756} in the multivariate setting. In \cite{arXiv:2005.09994}, the authors introduced Bernstein estimators with Poisson weights (also called Szasz estimators) for the estimation of c.d.f.s that are supported on $[0,\infty)$, see also \cite{arXiv:2010.05146}.}

\vspace{2mm}
In the present paper, we complement the study in \cite{Mombeni_et_al_2019_accepted} by introducing five new asymmetric kernel c.d.f.\ estimators, namely the Gamma, inverse Gamma, lognormal, inverse Gaussian and reciprocal inverse Gaussian kernel c.d.f.\ estimators.
Our goal is to prove several asymptotic properties for these five new estimators (bias, variance, mean squared error, mean integrated squared error and asymptotic normality) and compare their numerical performance against traditional methods and against the Birnbaum-Saunders and Weibull kernel c.d.f.\ estimators from \cite{Mombeni_et_al_2019_accepted}.
As we will see in the discussion of the results (Section~\ref{sec:discussion}), the lognormal and Birnbaum-Saunders kernel c.d.f.\ estimators perform the best overall, while the other asymmetric kernel estimators are sometimes better but always at least competitive against the boundary kernel method from \cite{MR3072469}.

\section{The models}\label{sec:models}

Let $X_1, X_2, \dots, X_n$ be a sequence of i.i.d.\ observations from an unknown cumulative distribution function $F$ supported on the half-line $[0,\infty)$.
We consider the following seven asymmetric kernel estimators (the first five are new):
\begin{align}
    &\hat{F}_{n,b}^{\mathrm{Gam}}(x) \leqdef \frac{1}{n} \sum_{i=1}^n \overline{K}_{\mathrm{Gam}}(X_i \nvert b^{-1} x + 1, b), \label{def:G.kernel.estimator} \\[-0.5mm]
    &\hat{F}_{n,b}^{\mathrm{IGam}}(x) \leqdef \frac{1}{n} \sum_{i=1}^n \overline{K}_{\mathrm{IGam}}(X_i \nvert b^{-1} + 1, x^{-1} b), \label{def:IGam.kernel.estimator} \\[-0.5mm]
    &\hat{F}_{n,b}^{\mathrm{LN}}(x) \leqdef \frac{1}{n} \sum_{i=1}^n \overline{K}_{\mathrm{LN}}(X_i \nvert \log x, \sqrt{b}), \label{def:LN.kernel.estimator} \\[-0.5mm]
    &\hat{F}_{n,b}^{\mathrm{IGau}}(x) \leqdef \frac{1}{n} \sum_{i=1}^n \overline{K}_{\mathrm{IGau}}(X_i \nvert x, b^{-1} x), \label{def:IGau.kernel.estimator} \\[-0.5mm]
    &\hat{F}_{n,b}^{\mathrm{RIG}}(x) \leqdef \frac{1}{n} \sum_{i=1}^n \overline{K}_{\mathrm{RIG}}(X_i \nvert x^{-1} (1 - b)^{-1}, x^{-1} b^{-1}), \label{def:RIG.kernel.estimator} \\[-0.5mm]
    &\hat{F}_{n,b}^{\mathrm{B-S}}(x) \leqdef \frac{1}{n} \sum_{i=1}^n \overline{K}_{\mathrm{B-S}}(X_i \nvert x, \sqrt{b}), \label{def:BS.kernel.estimator} \\[-0.5mm]
    &\hat{F}_{n,b}^{\mathrm{W}}(x) \leqdef \frac{1}{n} \sum_{i=1}^n \overline{K}_{\mathrm{W}}(X_i \nvert x / \Gamma(1 + b), b^{-1}), \label{def:W.kernel.estimator}
\end{align}
where $b > 0$ is a {\it smoothing} (or {\it bandwidth}) parameter, and
\begin{alignat*}{3}
    &\overline{K}_{\mathrm{Gam}}(t \nvert \alpha, \theta) \leqdef \frac{\Gamma(\alpha, t / \theta)}{\Gamma(\alpha)}, \quad && \alpha, \theta > 0, \\[-0.5mm]
    &\overline{K}_{\mathrm{IGam}}(t \nvert \alpha, \theta) \leqdef 1 - \frac{\Gamma(\alpha, 1 / (t \theta))}{\Gamma(\alpha)}, \quad && \alpha, \theta > 0, \\[-0.5mm]
    &\overline{K}_{\mathrm{LN}}(t \nvert \mu, \sigma) \leqdef 1 - \Phi\bigg(\frac{\log t - \mu}{\sigma}\bigg), \quad && \mu, \sigma > 0, \\[-0.5mm]
    &\overline{K}_{\mathrm{IGau}}(t \nvert \mu, \lambda) \leqdef 1 - \Phi\bigg(\sqrt{\frac{\lambda}{t}} \Big(\frac{t}{\mu} - 1\Big)\bigg) - e^{2\lambda / \mu} \Phi\bigg(-\sqrt{\frac{\lambda}{t}} \Big(\frac{t}{\mu} + 1\Big)\bigg), \quad && \mu, \lambda > 0, \\[0.5mm]
    &\overline{K}_{\mathrm{RIG}}(t \nvert \mu, \lambda) \leqdef \Phi\bigg(\sqrt{\lambda t} \Big(\frac{1}{t \mu} - 1\Big)\bigg) + e^{2\lambda / \mu} \Phi\bigg(-\sqrt{\lambda t} \Big(\frac{1}{t \mu} + 1\Big)\bigg), \quad && \mu, \lambda > 0, \\[-0.5mm]
    &\overline{K}_{\mathrm{B-S}}(t \nvert \beta, \alpha) \leqdef 1 - \Phi\bigg(\frac{1}{\alpha} \Big(\sqrt{\frac{t}{\beta}} - \sqrt{\frac{\beta}{t}}\Big)\bigg), \quad && \beta, \alpha > 0, \\[-0.5mm]
    &\overline{K}_{\mathrm{W}}(t \nvert \lambda, k) \leqdef \exp\bigg(-\Big(\frac{t}{\lambda}\Big)^k\bigg), \quad && \lambda, k > 0,
\end{alignat*}
denote, respectively, the survival function of the
\begin{itemize}\setlength\itemsep{0mm}
    \item $\text{Gamma}\hspace{0.2mm}(\alpha,\theta)$ distribution (with shape/scale parametrization),
    \item $\text{InverseGamma}\hspace{0.2mm}(\alpha,\theta)$ distribution (with shape/scale parametrization),
    \item $\text{LogNormal}\hspace{0.2mm}(\mu,\sigma)$ distribution,
    \item $\text{InverseGaussian}\hspace{0.2mm}(\mu,\lambda)$ distribution,
    \item $\text{ReciprocalInverseGaussian}\hspace{0.2mm}(\mu,\lambda)$ distribution,
    \item $\text{Birnbaum-Saunders}\hspace{0.2mm}(\beta,\alpha)$ distribution,
    \item $\text{Weibull}\hspace{0.2mm}(\lambda,k)$ distribution.
\end{itemize}
The function $\Gamma(\alpha,z) \leqdef \int_z^{\infty} t^{\alpha - 1} e^{-t} \rd t$ denotes the upper incomplete gamma function (where $\Gamma(\alpha) \leqdef \Gamma(\alpha,0)$), and $\Phi$ denotes the c.d.f.\ of the standard normal distribution.
The parametrizations are chosen so that
\begin{itemize}\setlength\itemsep{0mm}
    \item The mode of the kernel function in \eqref{def:G.kernel.estimator} is $x$;
    \item The median of the kernel function in \eqref{def:LN.kernel.estimator} and \eqref{def:BS.kernel.estimator} is $x$;
    \item The mean of the kernel function in \eqref{def:IGam.kernel.estimator}, \eqref{def:IGau.kernel.estimator}, \eqref{def:RIG.kernel.estimator} and \eqref{def:W.kernel.estimator} is $x$.
\end{itemize}

\vspace{2mm}
In this paper, we will compare the numerical performance of the above seven asymmetric kernel c.d.f.\ estimators against the following three traditional estimators ($K$ here is the c.d.f.\ of a kernel function):
\begin{align}
    &\hat{F}_{n,b}^{\mathrm{OK}}(x) \leqdef \frac{1}{n} \sum_{i=1}^n K\Big(\frac{x - X_i}{b}\Big), \label{def:OK.estimator} \\
    &\hat{F}_{n,b}^{\mathrm{BK}}(x) \leqdef \frac{1}{n} \sum_{i=1}^n \bigg\{K\Big(\frac{x - X_i}{b}\Big) \ind_{[b,\infty)}(x) + K\Big(\frac{x - X_i}{x}\Big) \ind_{(0,b)}(x)\bigg\}, \label{def:BK.estimator} \\
    &\hat{F}_n^{\mathrm{EDF}}(x) \leqdef \frac{1}{n} \sum_{i=1}^n \ind_{[X_i,\infty)}(x). \label{def:EDF.estimator}
\end{align}
which denote, respectively, the ordinary kernel (OK) c.d.f.\ estimator (from \cite{Tiago_1963}, \cite{MR0166862} or \cite{MR184336}), the boundary kernel (BK) c.d.f.\ estimator (from Example 2.3 in \cite{MR3072469}) and the empirical c.d.f.\ (EDF) estimator.

\section{Outline, assumptions and notation}\label{sec:outline.assumptions.notation}

    \subsection{Outline}

    In Section~\ref{sec:G.kernel.results},~\ref{sec:IGam.kernel.results},~\ref{sec:LN.kernel.results},~\ref{sec:IGau.kernel.results} and~\ref{sec:RIG.kernel.results}, the asymptotic normality, and the asymptotic expressions for the bias, variance, mean squared error (MSE) and mean integrated squared error (MISE), are stated for the $\mathrm{Gam}$, $\mathrm{IGam}$, $\mathrm{LN}$, $\mathrm{IGau}$ and $\mathrm{RIG}$ kernel c.d.f.\ estimators, respectively.
    The proofs can be found in Appendix~\ref{sec:proof.results.G.kernel},~\ref{sec:proof.results.IGam.kernel},~\ref{sec:proof.results.LN.kernel},~\ref{sec:proof.results.IGau.kernel} and~\ref{sec:proof.results.RIG.kernel}, respectively.
    Aside from the asymptotic normality (which can easily be deduced), these results were obtained for the Birnbaum-Saunders and Weibull kernel c.d.f.\ estimators in \cite{Mombeni_et_al_2019_accepted}.
    In Section~\ref{sec:numerical.study}, we compare the performance of all seven asymmetric kernel estimators above with the three traditional estimators $\mathrm{OK}$, $\mathrm{BK}$ and $\mathrm{EDF}$, defined in \eqref{def:OK.estimator}, \eqref{def:BK.estimator} and \eqref{def:EDF.estimator}.
    A discussion of the results and our conclusion follow in Section~\ref{sec:discussion} and Section~\ref{sec:conclusion}.
    Technical calculations for the proofs of the asymptotic results are gathered in Appendix~\ref{sec:tech.lemmas}.

    \subsection{Assumptions}

    Throughout the paper, we make the following two basic assumptions:
    \begin{enumerate}
        \item The target c.d.f.\ $F$ has two continuous and bounded derivatives;
        \item The {\it smoothing} (or {\it bandwidth}) parameter $b = b_n > 0$ satisfies $b\to 0$ as $n\to \infty$.
    \end{enumerate}

    \subsection{Notation}

    Throughout the paper, the notation $u = \OO(v)$ means that $\limsup |u / v| < C < \infty$ as $n\to \infty$.
    The positive constant $C$ can depend on the target c.d.f.\ $F$, but no other variable unless explicitly written as a subscript.
    For example, if $C$ depends on a given point $x\in (0,\infty)$, we would write $u = \OO_x(v)$.
    Similarly, the notation $u = \oo(v)$ means that $\lim |u / v| = 0$ as $n\to \infty$.
    The same rule applies for the subscript.
    The expression `$\stackrel{\scriptscriptstyle \mathscr{D}}{\longrightarrow}$' will denote the convergence in law (or distribution).

\section{Asymptotic properties of the c.d.f.\ estimator with Gam kernel}\label{sec:G.kernel.results}

In this section, we find the asymptotic properties of the Gamma ($\mathrm{Gam}$) kernel estimator defined in \eqref{def:G.kernel.estimator}.

\begin{lemma}[{\color{blue} Bias and variance}]\label{lem:bias.variance.G.kernel}
    For any given $x\in (0,\infty)$,
    \begin{align}
        &\BB[\hat{F}_{n,b}^{\mathrm{Gam}}(x)] \leqdef \EE[\hat{F}_{n,b}^{\mathrm{Gam}}(x)] - F(x) = b \cdot (f(x) + \frac{x}{2} f'(x)) + \oo_x(b), \\
        &\VV(\hat{F}_{n,b}^{\mathrm{Gam}}(x)) = n^{-1} F(x) (1 - F(x)) - n^{-1} b^{1/2} \cdot \frac{\sqrt{x} f(x)}{\sqrt{\pi}} + \OO_x(n^{-1} b).
    \end{align}
\end{lemma}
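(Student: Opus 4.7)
Both formulas rest on the Fubini (one-step integration by parts) identity
\begin{equation*}
\EE[\overline{K}_{\mathrm{Gam}}(X_1 \nvert b^{-1} x + 1, b)] = \int_0^\infty \overline{K}_{\mathrm{Gam}}(t \nvert b^{-1}x+1, b)\, f(t)\, \rd t = \EE[F(Y_x)],
\end{equation*}
where $Y_x \sim \mathrm{Gamma}(b^{-1}x+1, b)$ is an auxiliary random variable with $\EE[Y_x] = x + b$, $\VV(Y_x) = b x + b^2$, and third absolute central moment of order $b^{3/2}$. For the bias, a second-order Taylor expansion of $F$ around $x$ (legitimate since $f'$ is continuous and bounded) gives
\begin{equation*}
\EE[F(Y_x)] - F(x) = f(x)\, b + \tfrac{1}{2} f'(x)(b x + 2 b^2) + \OO_x(b^{3/2}) = b\bigl(f(x) + \tfrac{x}{2} f'(x)\bigr) + \oo_x(b),
\end{equation*}
which is the announced expansion.

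For the variance I would start from $n \cdot \VV(\hat{F}_{n,b}^{\mathrm{Gam}}(x)) = \EE[\overline{K}(X_1)^2] - (\EE[\overline{K}(X_1)])^2$, abbreviating $\overline{K} \reqdef \overline{K}_{\mathrm{Gam}}(\cdot \nvert b^{-1}x+1, b)$; the squared second term equals $F(x)^2 + \OO_x(b)$ by the bias computation. Since $2 \overline{K}(t)\, K_{\mathrm{Gam}}(t \nvert b^{-1}x+1, b)$ is the density of $M_x \leqdef \min(Y_x, Y_x')$ for an independent copy $Y_x'$ of $Y_x$, an integration by parts yields
\begin{equation*}
\EE[\overline{K}(X_1)^2] = 2 \int_0^\infty F(t)\, \overline{K}(t)\, K_{\mathrm{Gam}}(t \nvert b^{-1} x + 1, b)\, \rd t = \EE[F(M_x)].
\end{equation*}
A Taylor expansion of $F$ around $x$, together with the crude bound $\EE[(M_x - x)^2] \leq 2\EE[(Y_x - x)^2] = \OO_x(b)$, then reduces the problem to pinning down $\EE[M_x - x]$ to order $b^{1/2}$.

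This last asymptotic is the crux of the argument and the main obstacle I anticipate. Starting from the identity $\EE[M_x] = (x + b) - \tfrac{1}{2}\EE|Y_x - Y_x'|$, I would invoke a quantitative central limit theorem: as $b \to 0$ the shape parameter $\alpha = b^{-1} x + 1$ diverges, so $(Y_x - Y_x')/\sqrt{2(bx+b^2)}$ approaches a standard normal, and a Berry--Esseen-type control on the first absolute moment (tailored to Gamma sums and presumably packaged as one of the technical lemmas in Appendix~\ref{sec:tech.lemmas}) gives $\EE|Y_x - Y_x'| = 2\sqrt{(bx + b^2)/\pi} + \OO_x(b)$. Expanding $\sqrt{(bx+b^2)/\pi} = \sqrt{bx/\pi} + \OO_x(b^{3/2})$ then yields $\EE[M_x - x] = b - \sqrt{bx/\pi} + \OO_x(b)$. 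Substituting back, $\EE[F(M_x)] = F(x) + b f(x) - b^{1/2} \sqrt{x}\, f(x)/\sqrt{\pi} + \OO_x(b)$, and subtracting $F(x)^2 + \OO_x(b)$ and dividing by $n$ delivers the stated variance expansion.
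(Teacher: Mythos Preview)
Your architecture is the paper's: both identities $\EE[\overline K(X_1)]=\EE[F(T)]$ and $\EE[\overline K(X_1)^2]=\EE[F(\min(T_1,T_2))]$ via integration by parts, followed by a Taylor expansion of $F$ about $x$, are exactly what the proof does. The divergence is in the evaluation of $\EE[\min(T_1,T_2)-x]$. The paper does \emph{not} use a CLT: Lemma~\ref{lem:tech.var.G.prelim} computes $\EE[(\min\{T_1,T_2\})^j]$ for $j=1,2$ in closed form by replacing $\ind_{[0,\infty)}(u-v)$ with its Fourier integral representation, factoring the resulting Gamma integrals, and evaluating the leftover one-dimensional integral via Ramanujan's master theorem; this yields $\EE[\min\{T_1,T_2\}]=\theta\alpha-\tfrac{\theta}{\sqrt\pi}\,\Gamma(\alpha+\tfrac12)/\Gamma(\alpha)$, and the asymptotic $-\sqrt{bx/\pi}+b+\OO_x(b^{3/2})$ then drops out of Stirling (Corollary~\ref{cor:tech.var.G}). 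Your route via $\EE[M_x]=(x+b)-\tfrac12\EE|Y_x-Y_x'|$ and a normal approximation is natural --- it is in fact how the paper handles the IGau and RIG kernels, where no closed form is available --- but be aware that a plain Berry--Esseen bound on the distribution function, integrated over $[0,\infty)$, will not by itself deliver the sharp $\OO_x(b)$ remainder in $\EE|Y_x-Y_x'|$; you would need an Edgeworth-type refinement (the symmetry of $Y_x-Y_x'$ kills the leading correction) or a direct moment calculation to reach the stated error. A smaller point: under the paper's standing assumptions ($f'$ continuous and bounded, no Lipschitz condition), the bias remainder is controlled as $\oo_x(\EE[(T-x)^2])$ via continuity of $f'$, not as $\OO_x(b^{3/2})$ via a third-moment bound.
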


\begin{corollary}[{\color{blue} Mean squared error}]\label{cor:MSE.G.kernel}
    For any given $x\in (0,\infty)$,
    \begin{align}
        \mathrm{MSE}(\hat{F}_{n,b}^{\mathrm{Gam}}(x))
        &= \VV(\hat{F}_{n,b}^{\mathrm{Gam}}(x)) + \big(\BB[\hat{F}_{n,b}^{\mathrm{Gam}}(x)]\big)^2 \notag \\
        &= n^{-1} F(x) (1 - F(x)) - n^{-1} b^{1/2} \cdot \frac{\sqrt{x} f(x)}{\sqrt{\pi}} \notag \\[-1mm]
        &\quad+ b^2 \cdot (f(x) + \frac{x}{2} f'(x))^2 + \OO_x(n^{-1} b) + \oo_x(b^2).
    \end{align}
    In particular, if $f(x) \cdot (f(x) + x f'(x)) \neq 0$, the asymptotically optimal choice of $b$, with respect to $\mathrm{MSE}$, is
    \begin{equation}
        b_{\mathrm{opt}} = n^{-2/3} \left[\frac{4 \cdot (f(x) + \frac{x}{2} f'(x))^2}{\sqrt{x} f(x) / \sqrt{\pi}}\right]^{-2/3}
    \end{equation}
    with
    \begin{equation}
        \begin{aligned}
            \mathrm{MSE}(\hat{F}_{n,b_{\mathrm{opt}}}^{\mathrm{Gam}}(x))
            &= n^{-1} F(x) (1 - F(x)) \\[-1.5mm]
            &\quad- n^{-4/3} \, \frac{3}{4} \left[\frac{(\sqrt{x} f(x) / \sqrt{\pi})^4}{4 \cdot (f(x) + \frac{x}{2} f'(x))^2}\right]^{1/3} \hspace{-2mm} + \oo_x(n^{-4/3}).
        \end{aligned}
    \end{equation}
\end{corollary}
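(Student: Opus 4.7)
The plan is to combine the standard bias-variance decomposition $\mathrm{MSE}(\hat{F}_{n,b}^{\mathrm{Gam}}(x)) = \VV(\hat{F}_{n,b}^{\mathrm{Gam}}(x)) + \big(\BB[\hat{F}_{n,b}^{\mathrm{Gam}}(x)]\big)^2$ with the expansions already established in Lemma~\ref{lem:bias.variance.G.kernel}, and then carry out an elementary one-variable optimization in $b$. First, I would square the bias: writing $b \cdot (f(x) + \tfrac{x}{2} f'(x)) + \oo_x(b) = a + r$ with $a = \OO_x(b)$ and $r = \oo_x(b)$, the identity $(a+r)^2 = a^2 + 2ar + r^2$ shows that the cross and remainder terms are $\oo_x(b^2)$, so $\big(\BB[\hat{F}_{n,b}^{\mathrm{Gam}}(x)]\big)^2 = b^2 (f(x) + \tfrac{x}{2} f'(x))^2 + \oo_x(b^2)$. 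Adding this to the variance expansion from Lemma~\ref{lem:bias.variance.G.kernel} immediately yields the claimed MSE formula.

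Next, I would optimize in $b$. Set $A \leqdef \sqrt{x}\, f(x)/\sqrt{\pi}$ and $B \leqdef (f(x) + \tfrac{x}{2} f'(x))^2$, so that the $b$-dependent leading part of the $\mathrm{MSE}$ is $g(b) = -n^{-1} A\, b^{1/2} + B\, b^2$. Under the assumption $f(x) \cdot (f(x) + x f'(x)) \neq 0$ we have $A, B > 0$, and $g$ is smooth on $(0,\infty)$ with $g(b)\to 0^-$ as $b \downarrow 0$ and $g(b) \to \infty$ as $b \to \infty$. Solving $g'(b) = -\tfrac{1}{2} n^{-1} A\, b^{-1/2} + 2 B\, b = 0$ gives $b^{3/2} = A/(4 B n)$, hence the stated $b_{\mathrm{opt}} = n^{-2/3} [4 B / A]^{-2/3}$, which is the unique (and therefore global) minimizer of $g$ on $(0,\infty)$.

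Finally, I would substitute $b_{\mathrm{opt}}$ back into $g$. Using $b_{\mathrm{opt}}^{1/2} = n^{-1/3} [A/(4B)]^{1/3}$ and $b_{\mathrm{opt}}^2 = n^{-4/3} [A/(4B)]^{4/3}$, the two leading contributions combine to $n^{-4/3} [A^4/(4B)]^{1/3} \bigl(-1 + \tfrac{1}{4}\bigr) = -\tfrac{3}{4}\, n^{-4/3} [A^4/(4B)]^{1/3}$, which is exactly the announced expression. The only point requiring mild care, and arguably the main (though elementary) obstacle, is the bookkeeping of the error terms at $b = b_{\mathrm{opt}} \asymp n^{-2/3}$: the $\OO_x(n^{-1} b)$ remainder from the variance becomes $\OO_x(n^{-5/3}) = \oo_x(n^{-4/3})$, and the $\oo_x(b^2)$ remainder from the squared bias becomes $\oo_x(n^{-4/3})$, matching the error recorded in the statement. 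Everything else is a direct consequence of Lemma~\ref{lem:bias.variance.G.kernel} and one-variable calculus.
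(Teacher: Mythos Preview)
Your proposal is correct and matches the paper's (implicit) approach: the paper states this result as an immediate corollary of Lemma~\ref{lem:bias.variance.G.kernel} without a separate proof, and you have simply written out the routine bias--variance decomposition, squaring of the bias expansion, one-variable minimization of $-n^{-1}A\,b^{1/2}+B\,b^2$, and error bookkeeping at $b_{\mathrm{opt}}\asymp n^{-2/3}$ that the paper leaves to the reader.
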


\begin{proposition}[{\color{blue} Mean integrated squared error}]\label{prop:MISE.G.kernel}
    Assuming that the target density $f = F'$ satisfies
    \begin{equation}
        \int_0^{\infty} \sqrt{x} f(x) \rd x < \infty \quad \text{and} \quad \int_0^{\infty} (f(x) + \frac{x}{2} f'(x))^2 \rd x < \infty,
    \end{equation}
    then we have
    \begin{align}
        \mathrm{MISE}(\hat{F}_{n,b}^{\mathrm{Gam}})
        &= \int_0^{\infty} \VV(\hat{F}_{n,b}^{\mathrm{Gam}}(x)) \rd x + \int_0^{\infty} \big(\BB[\hat{F}_{n,b}^{\mathrm{Gam}}(x)]\big)^2 \rd x \notag \\
        &= n^{-1} \int_0^{\infty} F(x) (1 - F(x)) \rd x - n^{-1} b^{1/2} \int_0^{\infty} \frac{\sqrt{x} f(x)}{\sqrt{\pi}} \rd x \notag \\[-1mm]
        &\quad+ b^2 \int_0^{\infty} (f(x) + \frac{x}{2} f'(x))^2 \rd x + \oo(n^{-1} b^{1/2}) + \oo(b^2).
    \end{align}
    In particular, if $f(x) \cdot (f(x) + x f'(x)) \neq 0$, the asymptotically optimal choice of $b$, with respect to $\mathrm{MISE}$, is
    \vspace{-3mm}
    \begin{equation}\label{eq:b.opt.MISE.G}
        b_{\mathrm{opt}} = n^{-2/3} \left[\frac{4 \int_0^{\infty} (f(x) + \frac{x}{2} f'(x))^2 \rd x}{\int_0^{\infty} \sqrt{x} f(x) / \sqrt{\pi} \rd x}\right]^{-2/3}
    \end{equation}
    with
    \begin{equation}
        \begin{aligned}
            \mathrm{MISE}(\hat{F}_{n,b_{\mathrm{opt}}}^{\mathrm{Gam}})
            &= n^{-1} \int_0^{\infty} F(x) (1 - F(x)) \rd x \\[-1.5mm]
            &\quad- n^{-4/3} \, \frac{3}{4} \left[\frac{\big(\int_0^{\infty} \sqrt{x} f(x) / \sqrt{\pi} \rd x\big)^4}{4 \int_0^{\infty} (f(x) + \frac{x}{2} f'(x))^2 \rd x}\right]^{1/3} \\
            &\quad+ \oo(n^{-4/3}).
        \end{aligned}
    \end{equation}
\end{proposition}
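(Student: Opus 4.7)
The plan is to reduce the proof of Proposition~\ref{prop:MISE.G.kernel} to Lemma~\ref{lem:bias.variance.G.kernel} by integrating the pointwise expansions of the bias and variance over $(0,\infty)$, and then solving a one-dimensional minimization for $b$. Since $\mathrm{MISE}$ decomposes as $\int_0^\infty \VV(\hat{F}_{n,b}^{\mathrm{Gam}}(x))\,\rd x + \int_0^\infty(\BB[\hat{F}_{n,b}^{\mathrm{Gam}}(x)])^2\,\rd x$, I would handle the two integrals separately and then combine.

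For the integrated variance, I would plug in the three-term expansion from Lemma~\ref{lem:bias.variance.G.kernel}. The first two terms integrate trivially: $n^{-1}\int_0^\infty F(x)(1-F(x))\,\rd x$ is finite because $\int_0^\infty\sqrt{x}f(x)\,\rd x<\infty$ implies $F$ has a finite first moment (which controls the tail of $F(1-F)$), and $n^{-1}b^{1/2}\int_0^\infty \sqrt{x}f(x)/\sqrt{\pi}\,\rd x$ is finite by assumption. The remainder $\OO_x(n^{-1}b)$ has to be shown to integrate to $\oo(n^{-1}b^{1/2})$. For this I would revisit the proof of the variance expansion in Lemma~\ref{lem:bias.variance.G.kernel} to extract an explicit $x$-dependence in the constant (typically something like $C(1+\sqrt{x})f(x)$ arising from Taylor-expanding the kernel's second moment), which by the imposed integrability is itself integrable, and use a split of $(0,\infty)$ into a fixed compact set and its complement combined with dominated convergence to convert $\int \OO_x(n^{-1}b)\,\rd x$ to $\oo(n^{-1}b^{1/2})$.

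For the integrated squared bias, I would expand the square of $b\,(f(x)+\tfrac{x}{2}f'(x)) + \oo_x(b)$ to get $b^2(f(x)+\tfrac{x}{2}f'(x))^2 + 2b\,(f(x)+\tfrac{x}{2}f'(x))\cdot \oo_x(b) + \oo_x(b^2)$. The leading term integrates directly by the second integrability hypothesis, and the cross term is controlled by Cauchy--Schwarz against the same integrable function $(f+\tfrac{x}{2}f')^2$. The $\oo_x(b^2)$ remainder again requires a uniform (i.e.\ dominated) envelope: since $F$ has two continuous and bounded derivatives, the Taylor-with-integral-remainder computation underlying Lemma~\ref{lem:bias.variance.G.kernel} yields a bound of the form $b^2\cdot h(x)$ with $h$ integrable, so dominated convergence delivers the $\oo(b^2)$ overall rate.

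The main obstacle is exactly this uniformity issue, namely converting the $\oo_x$ and $\OO_x$ remainders in Lemma~\ref{lem:bias.variance.G.kernel} into $\oo$ and $\OO$ after integration against $\rd x$. Everything else is bookkeeping. Once the $\mathrm{MISE}$ expansion is established, I would determine $b_{\mathrm{opt}}$ by differentiating the dominant variable part
\begin{equation*}
    b \mapsto -n^{-1}b^{1/2}\int_0^\infty \frac{\sqrt{x}f(x)}{\sqrt{\pi}}\,\rd x + b^2\int_0^\infty \Big(f(x)+\tfrac{x}{2}f'(x)\Big)^2\,\rd x,
\end{equation*}
setting the derivative to zero to solve $b^{3/2}\propto n^{-1}$, which yields \eqref{eq:b.opt.MISE.G}. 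Substituting $b_{\mathrm{opt}}$ back and collecting the $-\tfrac{1}{4}\cdot$(leading)$\,+\,1\cdot(\text{half of leading})$ algebra produces the stated $-\tfrac{3}{4}[\cdot]^{1/3}n^{-4/3}$ coefficient, concluding the proof.
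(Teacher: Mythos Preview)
Your approach is sound and is essentially what the paper intends, though there is something worth flagging: the proof that the paper labels ``Proof of Proposition~\ref{prop:MISE.G.kernel}'' in Appendix~\ref{sec:proof.results.G.kernel} is in fact the proof of the asymptotic normality result (Proposition~\ref{prop:asymptotic.normality.G.kernel}), built on verifying a Lindeberg condition for the centered summands $Z_{i,b}$. The paper gives no separate dedicated argument for the MISE expansion; it apparently treats it as an immediate consequence of integrating the pointwise bias and variance expressions from Lemma~\ref{lem:bias.variance.G.kernel}, without addressing the uniformity issue you correctly isolate as the main obstacle. Your plan to extract an integrable $x$-dependent envelope from the Taylor-remainder computations behind Lemma~\ref{lem:bias.variance.G.kernel} and then apply dominated convergence is exactly what is needed to make the passage from $\oo_x$, $\OO_x$ to global $\oo$, $\OO$ rigorous, and it goes beyond what the paper explicitly writes down. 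The optimization step at the end is routine and matches the paper's stated formulas.

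One small slip in your outline: the hypothesis $\int_0^\infty \sqrt{x}\,f(x)\,\rd x<\infty$ gives only $\EE[\sqrt{X}]<\infty$, not a finite first moment, so it does not by itself imply $\int_0^\infty F(x)(1-F(x))\,\rd x<\infty$. That integral equals $\EE[X]$ minus a nonnegative correction and is finite iff $\EE[X]<\infty$, which should be taken as an additional (implicit) assumption for the MISE to be finite; the paper is equally silent on this point.
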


\begin{proposition}[{\color{blue} Asymptotic normality}]\label{prop:asymptotic.normality.G.kernel}
    For any $x > 0$ such that $0 < F(x) < 1$, we have the following convergence in distribution:
    \begin{equation}
        n^{1/2} (\hat{F}_{n,b}^{\mathrm{Gam}}(x) - \EE[\hat{F}_{n,b}^{\mathrm{Gam}}(x)]) \stackrel{\mathscr{D}}{\longrightarrow} \mathcal{N}(0, \sigma^2(x)), \quad \text{as } b\to 0, \, n\to \infty,
    \end{equation}
    where $\sigma^2(x) \leqdef F(x) (1 - F(x))$.
    In particular, Lemma~\ref{lem:bias.variance.G.kernel} implies
    \begin{alignat}{3}
        &n^{1/2} (\hat{F}_{n,b}^{\mathrm{Gam}}(x) - F(x)) \stackrel{\mathscr{D}}{\longrightarrow} \mathcal{N}(0, \sigma^2(x)), \quad &&\text{if } n^{1/2} b \to 0, \\
        &n^{1/2} (\hat{F}_{n,b}^{\mathrm{Gam}}(x) - F(x)) \stackrel{\mathscr{D}}{\longrightarrow} \mathcal{N}(\lambda \cdot (f(x) + \tfrac{x}{2} f'(x)), \sigma^2(x)), \quad &&\text{if } n^{1/2} b \to \lambda,
    \end{alignat}
    for any constant $\lambda > 0$.
\end{proposition}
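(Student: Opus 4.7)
The plan is to apply the Lindeberg-Feller central limit theorem for triangular arrays of row-independent random variables to
\begin{equation*}
    Z_{i,n} \leqdef n^{-1/2}\big(Y_{i,n} - \EE[Y_{i,n}]\big), \qquad Y_{i,n} \leqdef \overline{K}_{\mathrm{Gam}}(X_i \nvert b^{-1} x + 1, b),
\end{equation*}
so that $n^{1/2}(\hat{F}_{n,b}^{\mathrm{Gam}}(x) - \EE[\hat{F}_{n,b}^{\mathrm{Gam}}(x)]) = \sum_{i=1}^n Z_{i,n}$. For each fixed $n$, the variables $Z_{1,n},\dots,Z_{n,n}$ are i.i.d., so only the variance convergence and Lindeberg's condition need to be checked.

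First, I would read off the variance directly from Lemma~\ref{lem:bias.variance.G.kernel}:
\begin{equation*}
    \sum_{i=1}^n \VV(Z_{i,n}) = n\,\VV(\hat{F}_{n,b}^{\mathrm{Gam}}(x)) = F(x)(1-F(x)) - b^{1/2}\,\tfrac{\sqrt{x}\,f(x)}{\sqrt{\pi}} + \OO_x(b),
\end{equation*}
which tends to $\sigma^2(x) = F(x)(1-F(x))$ as $b\to 0$. The assumption $0 < F(x) < 1$ is used precisely here to ensure the limiting variance is strictly positive, so $\mathcal N(0,\sigma^2(x))$ is non-degenerate.

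Next, since $\overline{K}_{\mathrm{Gam}}(\,\cdot\nvert \alpha,\theta)$ is a survival function, $Y_{i,n}\in[0,1]$ and hence $|Z_{i,n}|\le 2n^{-1/2}$ almost surely. For any fixed $\varepsilon>0$ the event $\{|Z_{i,n}|>\varepsilon\}$ is empty once $n$ is large enough, so the Lindeberg condition
\begin{equation*}
    \sum_{i=1}^n \EE\big[Z_{i,n}^2\,\ind_{\{|Z_{i,n}|>\varepsilon\}}\big] \longrightarrow 0
\end{equation*}
holds trivially. Lindeberg-Feller then yields $\sum_{i=1}^n Z_{i,n} \stackrel{\mathscr D}{\longrightarrow}\mathcal N(0,\sigma^2(x))$, which is the first display of the proposition.

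For the two ``in particular'' statements, I would use the bias expansion from Lemma~\ref{lem:bias.variance.G.kernel} to write
\begin{equation*}
    n^{1/2}\big(\EE[\hat{F}_{n,b}^{\mathrm{Gam}}(x)] - F(x)\big) = n^{1/2} b \cdot (f(x) + \tfrac{x}{2}f'(x)) + \oo_x(n^{1/2}b),
\end{equation*}
which tends to $0$ when $n^{1/2}b\to 0$ and to $\lambda\cdot(f(x)+\tfrac{x}{2}f'(x))$ when $n^{1/2}b\to\lambda$; a Slutsky argument then shifts the limiting normal by this deterministic amount. I do not expect a serious obstacle here: the boundedness of $\overline{K}_{\mathrm{Gam}}$ kills any Lindeberg-type concern, and the variance asymptotics are already supplied by the preceding lemma. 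The only point requiring a little care is emphasizing that the non-degeneracy $\sigma^2(x)>0$ needed for the CLT (and not merely convergence in probability to a constant) is exactly what the hypothesis $0<F(x)<1$ provides.
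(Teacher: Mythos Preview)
Your proposal is correct and follows essentially the same route as the paper: write the centered estimator as a triangular-array sum of bounded i.i.d.\ terms, invoke the Lindeberg--Feller CLT with the Lindeberg condition holding trivially by the uniform bound $|Y_{i,n}|\le 1$, and read off the limiting variance from Lemma~\ref{lem:bias.variance.G.kernel}. Your treatment of the two ``in particular'' displays via the bias expansion and Slutsky is slightly more explicit than the paper's, but the argument is the same.
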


\section{Asymptotic properties of the c.d.f.\ estimator with IGam kernel}\label{sec:IGam.kernel.results}

In this section, we find the asymptotic properties of the inverse Gamma ($\mathrm{IGam}$) kernel estimator defined in \eqref{def:IGam.kernel.estimator}.

\begin{lemma}[{\color{blue} Bias and variance}]\label{lem:bias.variance.IGam.kernel}
    For any given $x\in (0,\infty)$,
    \begin{align}
        \BB[\hat{F}_{n,b}^{\mathrm{IGam}}(x)]
        &\leqdef \EE[\hat{F}_{n,b}^{\mathrm{IGam}}(x)] - F(x) = b \cdot \frac{x^2}{2} f'(x) + \oo_x(b), \\[1mm]
        \VV(\hat{F}_{n,b}^{\mathrm{IGam}}(x))
        &= n^{-1} F(x) (1 - F(x)) - n^{-1} b^{1/2} \cdot \frac{x f(x)}{\sqrt{\pi}} + \OO_x(n^{-1} b).
    \end{align}
\end{lemma}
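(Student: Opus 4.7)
The plan is to handle the bias and variance separately via the probabilistic representation \(\overline{K}_{\mathrm{IGam}}(t \nvert b^{-1}+1, x^{-1}b) = \PP(W_x \geq t)\) for an auxiliary random variable \(W_x \sim \mathrm{InverseGamma}(b^{-1}+1, x^{-1}b)\). A direct calculation with the Gamma function gives \(\EE[W_x] = x\) and \(\VV(W_x) = x^2 b/(1-b) = x^2 b + \OO(b^2)\), so the kernel is centered exactly at \(x\) with spread of order \(\sqrt{b}\).

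For the bias, Fubini's theorem yields
\[
\EE\big[\overline{K}_{\mathrm{IGam}}(X_1 \nvert b^{-1}+1, x^{-1}b)\big] = \int_0^\infty \PP(W_x \geq t)\, f(t)\, \rd t = \EE[F(W_x)].
\]
I would then Taylor-expand \(F\) about \(x\) with integral remainder, which is licit under Assumption~1. Since \(\EE[W_x - x] = 0\), the linear term vanishes and
\[
\EE[F(W_x)] - F(x) = \tfrac{1}{2} f'(x) \cdot \EE[(W_x - x)^2] + R,
\]
where \(R = \int_0^1 (1-u)\,\EE\big[(W_x-x)^2\,(f'(x+u(W_x-x))-f'(x))\big]\,\rd u\) is \(\oo_x(b)\) by the continuity of \(f'\) at \(x\) on a shrinking window \(\{|W_x-x| \leq \eta\}\), combined with Markov's inequality on the complement using \(\EE[(W_x-x)^4] = \OO(b^2)\). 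Substituting \(\EE[(W_x-x)^2] = x^2 b + \OO(b^2)\) then gives the claimed \(\BB = b \cdot \tfrac{x^2}{2} f'(x) + \oo_x(b)\).

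For the variance, write \(\VV(\hat{F}_{n,b}^{\mathrm{IGam}}(x)) = n^{-1}\{\EE[\overline{K}(X_1)^2] - (\EE[\overline{K}(X_1)])^2\}\) and use the algebraic identity \(\overline{K}^2 = \overline{K} - \overline{K}\,K\) (with \(K = 1 - \overline{K}\)) to reduce matters to \(\EE[\overline{K}(X_1) K(X_1)]\). By conditioning on two independent copies \(W, W'\) of \(W_x\) (both independent of \(X_1\)), this equals
\[
\PP(W' \leq X_1 \leq W) = \EE\big[\ind_{\{W' \leq W\}}(F(W) - F(W'))\big] = \tfrac{1}{2}\, \EE\big|F(W) - F(W')\big|.
\]
A local linearization \(F(W) - F(W') = (W-W')\int_0^1 f(W' + s(W-W'))\,\rd s\), combined with Cauchy--Schwarz and the boundedness of \(f'\), gives \(\EE|F(W)-F(W')| = f(x)\,\EE|W-W'| + \OO(b)\). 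A quantitative Gaussian approximation for \(\mathrm{InverseGamma}(b^{-1}+1, x^{-1}b)\) as \(b \to 0\) then yields \(\EE|W-W'| = 2x\sqrt{b/\pi} + \OO(b)\). Collecting these pieces with the bias expansion for \((\EE[\overline{K}(X_1)])^2\) produces the stated variance formula, with the \(\sqrt{b}\) correction \(-\tfrac{x f(x)}{\sqrt{\pi}}\) emerging cleanly.

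The main obstacle is the quantitative Gaussian approximation for \(\EE|W-W'|\) with an \(\OO(b)\) error (a purely qualitative CLT would give only \(\oo(\sqrt{b})\), which is insufficient to extract a clean \(\OO_x(n^{-1} b)\) remainder). This is achieved by combining Stirling-type asymptotic expansions for the incomplete Gamma function with explicit moment identities for the \(\mathrm{InverseGamma}\) distribution, following the technical calculations collected in Appendix~\ref{sec:tech.lemmas}.
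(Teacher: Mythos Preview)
Your bias argument is essentially identical to the paper's: both represent the expectation as $\EE[F(T)]$ for $T\sim\mathrm{InverseGamma}(b^{-1}+1,x^{-1}b)$, Taylor-expand around $x$, and use $\EE[T-x]=0$, $\EE[(T-x)^2]=x^2 b/(1-b)$.

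For the variance you take a slightly different but equivalent route. The paper writes $\EE[\overline{K}^2(X_1)]=\EE[F(S)]$ with $S=\min\{T_1,T_2\}$ directly (via integration by parts) and then Taylor-expands $F$ around $x$, feeding in the exact moments $\EE[S-x]=-x\sqrt{b/\pi}+\OO_x(b^{3/2})$ and $\EE[(S-x)^2]=bx^2+\OO_x(b^{3/2})$ from Corollary~\ref{cor:tech.var.IGam}. Your decomposition $\overline{K}^2=\overline{K}-\overline{K}K$ and the identity $\EE[\overline{K}(X_1)K(X_1)]=\tfrac12\EE|F(W)-F(W')|$ are correct and, after your linearization step, reduce to needing $\EE|W-W'|=2x\sqrt{b/\pi}+\OO(b)$, which is precisely $-2\,\EE[S-x]$ by the identity $\min\{W,W'\}=\tfrac12(W+W')-\tfrac12|W-W'|$; so the two variance arguments collapse to the same technical input. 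The paper obtains that input not through a Gaussian approximation but via an exact closed form for $\EE[(\min\{T_1,T_2\})^j]$ (Lemma~\ref{lem:tech.var.IGam.prelim}, proved using a Heaviside contour representation and Ramanujan's master theorem) followed by a Stirling expansion of the resulting Gamma ratio. Your proposed Gaussian approximation would also work, but the exact-moment route is cleaner and delivers the $\OO_x(b^{3/2})$ remainder without additional effort.
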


\begin{corollary}[{\color{blue} Mean squared error}]\label{cor:MSE.IGam.kernel}
    For any given $x\in (0,\infty)$,
    \begin{align}
        \mathrm{MSE}(\hat{F}_{n,b}^{\mathrm{IGam}}(x))
        &= \VV(\hat{F}_{n,b}^{\mathrm{IGam}}(x)) + \big(\BB[\hat{F}_{n,b}^{\mathrm{IGam}}(x)]\big)^2 \notag \\
        &= n^{-1} F(x) (1 - F(x)) - n^{-1} b^{1/2} \cdot \frac{x f(x)}{\sqrt{\pi}} \notag \\[-1mm]
        &\quad+ b^2 \cdot \frac{x^4}{4} (f'(x))^2 + \OO_x(n^{-1} b) + \oo_x(b^2).
    \end{align}
    In particular, if $f(x) \cdot f'(x) \neq 0$, the asymptotically optimal choice of $b$, with respect to $\mathrm{MSE}$, is
    \begin{equation}
        b_{\mathrm{opt}} = n^{-2/3} \left[\frac{4 \cdot \frac{x^4}{4} (f'(x))^2}{x f(x) / \sqrt{\pi}}\right]^{-2/3}
    \end{equation}
    with
    \begin{equation}
        \begin{aligned}
            \mathrm{MSE}(\hat{F}_{n,b_{\mathrm{opt}}}^{\mathrm{IGam}}(x))
            &= n^{-1} F(x) (1 - F(x)) \\[-1.5mm]
            &\quad- n^{-4/3} \, \frac{3}{4} \left[\frac{\big(x f(x) / \sqrt{\pi}\big)^4}{4 \cdot \frac{x^4}{4} (f'(x))^2}\right]^{1/3} \hspace{-2mm} + \oo_x(n^{-4/3}).
        \end{aligned}
    \end{equation}
\end{corollary}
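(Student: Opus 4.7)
The statement is a direct corollary of Lemma~\ref{lem:bias.variance.IGam.kernel}, so the plan is mostly algebraic bookkeeping. First I would invoke the basic decomposition $\mathrm{MSE}(\hat{F}_{n,b}^{\mathrm{IGam}}(x)) = \VV(\hat{F}_{n,b}^{\mathrm{IGam}}(x)) + (\BB[\hat{F}_{n,b}^{\mathrm{IGam}}(x)])^2$, which holds for any square-integrable estimator. Then I would plug in the variance expansion from Lemma~\ref{lem:bias.variance.IGam.kernel} verbatim and square the bias expansion: since $\BB[\hat{F}_{n,b}^{\mathrm{IGam}}(x)] = b \cdot \tfrac{x^2}{2} f'(x) + \oo_x(b)$, the square is $b^2 \cdot \tfrac{x^4}{4}(f'(x))^2 + 2 \cdot b \cdot \tfrac{x^2}{2}f'(x) \cdot \oo_x(b) + \oo_x(b^2) = b^2 \cdot \tfrac{x^4}{4}(f'(x))^2 + \oo_x(b^2)$, where the cross term is absorbed into $\oo_x(b^2)$ because $b \cdot \oo_x(b) = \oo_x(b^2)$. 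Adding the two expansions and keeping the $\OO_x(n^{-1}b)$ remainder from the variance yields the stated MSE formula.

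For the asymptotically optimal bandwidth, I would isolate the two $b$-dependent leading terms,
\begin{equation*}
    g(b) \leqdef - n^{-1} b^{1/2} \cdot \frac{x f(x)}{\sqrt{\pi}} + b^2 \cdot \frac{x^4}{4}(f'(x))^2 \reqdef - n^{-1} b^{1/2} A + b^2 B,
\end{equation*}
with $A \leqdef x f(x)/\sqrt{\pi} > 0$ and $B \leqdef \tfrac{x^4}{4}(f'(x))^2 > 0$ under the assumption $f(x) \cdot f'(x) \neq 0$. Differentiating, the first-order condition $g'(b) = -\tfrac{1}{2} n^{-1} b^{-1/2} A + 2 b B = 0$ rearranges to $b^{3/2} = A/(4 B n)$, giving $b_{\mathrm{opt}} = n^{-2/3} (A/(4B))^{2/3} = n^{-2/3} [4B/A]^{-2/3}$, which matches the stated formula once $A$ and $B$ are substituted back.

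Finally, I would insert $b_{\mathrm{opt}}$ into $g(b)$. Using $b_{\mathrm{opt}}^{1/2} = n^{-1/3}(A/(4B))^{1/3}$ and $b_{\mathrm{opt}}^2 = n^{-4/3}(A/(4B))^{4/3}$, a short computation gives
\begin{equation*}
    -n^{-1} b_{\mathrm{opt}}^{1/2} A + b_{\mathrm{opt}}^2 B = -n^{-4/3} \frac{A^{4/3}}{(4B)^{1/3}} + n^{-4/3} \frac{A^{4/3}}{4 \cdot (4B)^{1/3}} = -\frac{3}{4} n^{-4/3} \left[\frac{A^4}{4 B}\right]^{1/3},
\end{equation*}
which is exactly the announced leading correction once $A^4/(4B) = (x f(x)/\sqrt{\pi})^4 / (4 \cdot \tfrac{x^4}{4}(f'(x))^2)$ is re-expanded. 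The remainders $\OO_x(n^{-1}b_{\mathrm{opt}}) = \OO_x(n^{-5/3})$ and $\oo_x(b_{\mathrm{opt}}^2) = \oo_x(n^{-4/3})$ both collapse into $\oo_x(n^{-4/3})$, closing the argument.

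There is no genuine obstacle here; the only place where care is required is the squaring of the bias expansion, where one must confirm that the cross term between the leading $\OO_x(b)$ piece and the $\oo_x(b)$ remainder is indeed $\oo_x(b^2)$ and therefore does not contaminate the $b^2$ term used for the optimization. Everything else is a one-variable calculus minimization whose constants line up automatically with the statement.
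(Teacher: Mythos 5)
Your argument is correct and is exactly the routine the paper intends: the paper gives no separate proof of this corollary, treating it as an immediate consequence of Lemma~\ref{lem:bias.variance.IGam.kernel} via the decomposition $\mathrm{MSE} = \VV + \BB^2$ followed by a one-variable minimization of the leading $b$-dependent terms. Your handling of the cross term in the squared bias and of the remainders $\OO_x(n^{-1}b_{\mathrm{opt}}) = \OO_x(n^{-5/3})$ and $\oo_x(b_{\mathrm{opt}}^2)$ is precisely the care needed, and the constants check out.
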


\begin{proposition}[{\color{blue} Mean integrated squared error}]\label{prop:MISE.IGam.kernel}
    Assuming that the target density $f = F'$ satisfies
    \begin{equation}
        \int_0^{\infty} x f(x) \rd x < \infty \quad \text{and} \quad \int_0^{\infty} x^4 (f'(x))^2 \rd x < \infty,
    \end{equation}
    then we have
    \begin{align}
        \mathrm{MISE}(\hat{F}_{n,b}^{\mathrm{IGam}})
        &= \int_0^{\infty} \VV(\hat{F}_{n,b}^{\mathrm{IGam}}(x)) \rd x + \int_0^{\infty} \big(\BB[\hat{F}_{n,b}^{\mathrm{IGam}}(x)]\big)^2 \rd x \notag \\
        &= n^{-1} \int_0^{\infty} F(x) (1 - F(x)) \rd x - n^{-1} b^{1/2} \int_0^{\infty} \frac{x f(x)}{\sqrt{\pi}} \rd x \notag \\
        &\quad+ b^2 \int_0^{\infty} \frac{x^4}{4} (f'(x))^2 \rd x + \oo(n^{-1} b^{1/2}) + \oo(b^2).
    \end{align}
    In particular, if $\int_0^{\infty} x^4(f'(x))^2 \rd x > 0$, the asymptotically optimal choice of $b$, with respect to $\mathrm{MISE}$, is
    \begin{equation}\label{eq:b.opt.MISE.IGam}
        b_{\mathrm{opt}} = n^{-2/3} \left[\frac{4 \int_0^{\infty} \frac{x^4}{4} (f'(x))^2 \rd x}{\int_0^{\infty} x f(x) / \sqrt{\pi} \rd x}\right]^{-2/3}
    \end{equation}
    with
    \begin{equation}
        \begin{aligned}
            \mathrm{MISE}(\hat{F}_{n,b_{\mathrm{opt}}}^{\mathrm{IGam}})
            &= n^{-1} \int_0^{\infty} F(x) (1 - F(x)) \rd x \\[-1.5mm]
            &\quad- n^{-4/3} \, \frac{3}{4} \left[\frac{\big(\int_0^{\infty} x f(x) / \sqrt{\pi} \rd x\big)^4}{4 \int_0^{\infty} \frac{x^4}{4} (f'(x))^2 \rd x}\right]^{1/3} \\
            &\quad+ \oo(n^{-4/3}).
        \end{aligned}
    \end{equation}
\end{proposition}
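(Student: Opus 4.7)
The plan is to obtain Proposition~\ref{prop:MISE.IGam.kernel} in two stages: first integrate the pointwise bias and variance expansions from Lemma~\ref{lem:bias.variance.IGam.kernel} over $(0,\infty)$ to get the asymptotic MISE, then perform a one-variable optimization in $b$ to extract $b_{\mathrm{opt}}$ and the resulting optimal MISE.

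For the first stage, start from the identity
\[
\mathrm{MISE}(\hat{F}_{n,b}^{\mathrm{IGam}}) = \int_0^{\infty} \VV(\hat{F}_{n,b}^{\mathrm{IGam}}(x)) \rd x + \int_0^{\infty} \bigl(\BB[\hat{F}_{n,b}^{\mathrm{IGam}}(x)]\bigr)^2 \rd x,
\]
which follows by Tonelli's theorem from the definition of MISE. Plug in the pointwise expansions from Lemma~\ref{lem:bias.variance.IGam.kernel}: the leading $n^{-1}F(x)(1-F(x))$ term integrates directly, the $-n^{-1}b^{1/2}\,xf(x)/\sqrt{\pi}$ variance term integrates under the assumption $\int_0^\infty xf(x)\rd x<\infty$, and the squared bias contributes $b^2 \int_0^\infty \frac{x^4}{4}(f'(x))^2\rd x$ under the second integrability assumption. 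The cross terms from expanding the squared bias are of smaller order.

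The main obstacle, which deserves the bulk of the work, is justifying that the \emph{pointwise} error terms $\oo_x(b)$ (in the bias) and $\OO_x(n^{-1}b)$ (in the variance) integrate to the \emph{global} orders $\oo(b^2)$ and $\oo(n^{-1}b^{1/2})$ respectively. This requires revisiting the proof of Lemma~\ref{lem:bias.variance.IGam.kernel} to extract a locally uniform remainder estimate, and then invoking a dominated-convergence argument whose majorant comes from the stated integrability hypotheses on $xf(x)$ and $x^4(f'(x))^2$. Concretely, one shows that the $x$-dependent constants in the remainders are bounded by a multiple of $xf(x) + x^4(f'(x))^2$ plus lower-order quantities, so their integrals are finite and the $b$-dependence is preserved.

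Once the MISE expansion is in place, write $\mathrm{MISE}(\hat{F}_{n,b}^{\mathrm{IGam}}) = C_0 n^{-1} - A n^{-1} b^{1/2} + B b^2 + \text{lower order}$, where $A\leqdef \int_0^\infty xf(x)/\sqrt{\pi}\rd x$ and $B\leqdef \int_0^\infty \tfrac{x^4}{4}(f'(x))^2\rd x$. Differentiate the leading-order part in $b$ and set equal to zero: $-\tfrac{1}{2}A n^{-1} b^{-1/2} + 2Bb = 0$, giving $b^{3/2} = A/(4nB)$, i.e.\ $b_{\mathrm{opt}} = n^{-2/3}\bigl[4B/A\bigr]^{-2/3}$, matching \eqref{eq:b.opt.MISE.IGam}. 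Substituting this $b_{\mathrm{opt}}$ back, the two $b$-dependent terms combine into $-\tfrac{3}{4}n^{-4/3}\bigl[A^4/(4B)\bigr]^{1/3}$, which yields the displayed optimal MISE up to $\oo(n^{-4/3})$. This last step is purely algebraic manipulation of the critical-point condition $b_{\mathrm{opt}}^{3/2} = A/(4nB)$ and poses no real difficulty.
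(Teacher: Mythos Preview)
Your approach is sound and is the natural way to derive the MISE expansion: integrate the pointwise bias and variance from Lemma~\ref{lem:bias.variance.IGam.kernel}, then optimize in $b$. You correctly flag that the only genuine work lies in upgrading the pointwise remainders $\oo_x(b)$ and $\OO_x(n^{-1}b)$ to globally integrable ones, and your sketch of how to dominate them via the two integrability hypotheses is reasonable. The calculus optimization at the end is routine and your derivation of $b_{\mathrm{opt}}$ is correct.

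However, there is nothing in the paper to compare your argument against. The proof block in Appendix~\ref{sec:proof.results.IGam.kernel} labeled ``Proof of Proposition~\ref{prop:MISE.IGam.kernel}'' is in fact the proof of the \emph{asymptotic normality} result, Proposition~\ref{prop:asymptotic.normality.IGam.kernel}: it writes $\hat{F}_{n,b}^{\mathrm{IGam}}(x) - \EE[\hat{F}_{n,b}^{\mathrm{IGam}}(x)]$ as a sum of centered i.i.d.\ terms $Z_{i,b}$ and checks a Lindeberg condition. That argument has no bearing on the MISE. The same mislabeling occurs for all five kernels in the paper, so the MISE propositions are never given a written proof; the paper evidently treats them as immediate from integrating Corollary~\ref{cor:MSE.IGam.kernel} under the stated integrability assumptions, without spelling out the dominated-convergence step you describe.

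In short, your proposal is more detailed than anything the paper supplies for this statement, and the only ``paper's proof'' on record is for a different proposition.
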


\begin{proposition}[{\color{blue} Asymptotic normality}]\label{prop:asymptotic.normality.IGam.kernel}
    For any $x > 0$ such that $0 < F(x) < 1$, we have the following convergence in distribution:
    \begin{equation}
        n^{1/2} (\hat{F}_{n,b}^{\mathrm{IGam}}(x) - \EE[\hat{F}_{n,b}^{\mathrm{IGam}}(x)]) \stackrel{\mathscr{D}}{\longrightarrow} \mathcal{N}(0, \sigma^2(x)), \quad \text{as } b\to 0, \, n\to \infty,
    \end{equation}
    where $\sigma^2(x) \leqdef F(x) (1 - F(x))$.
    In particular, Lemma~\ref{lem:bias.variance.IGam.kernel} implies
    \begin{alignat}{3}
        &n^{1/2} (\hat{F}_{n,b}^{\mathrm{IGam}}(x) - F(x)) \stackrel{\mathscr{D}}{\longrightarrow} \mathcal{N}(0,\sigma^2(x)), \quad &&\text{if } n^{1/2} b \to 0, \\
        &n^{1/2} (\hat{F}_{n,b}^{\mathrm{IGam}}(x) - F(x)) \stackrel{\mathscr{D}}{\longrightarrow} \mathcal{N}(\lambda \cdot \tfrac{x^2 f'(x)}{2}, \sigma^2(x)), \quad &&\text{if } n^{1/2} b \to \lambda,
    \end{alignat}
    for any constant $\lambda > 0$.
\end{proposition}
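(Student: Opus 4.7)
The plan is to apply the Lyapunov central limit theorem to the i.i.d.\ triangular array
\begin{equation*}
Z_{i,n} \leqdef \overline{K}_{\mathrm{IGam}}(X_i \nvert b^{-1} + 1, x^{-1} b) - \EE[\overline{K}_{\mathrm{IGam}}(X_1 \nvert b^{-1} + 1, x^{-1} b)], \quad 1\leq i\leq n,
\end{equation*}
so that $n^{1/2}(\hat{F}_{n,b}^{\mathrm{IGam}}(x) - \EE[\hat{F}_{n,b}^{\mathrm{IGam}}(x)]) = n^{-1/2} \sum_{i=1}^n Z_{i,n}$. Since $\overline{K}_{\mathrm{IGam}}(\cdot \nvert \alpha,\theta)$ is a survival function, each $Z_{i,n}$ takes values in $[-1,1]$, which makes all moments trivially finite and makes Lyapunov's condition easy to verify.

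First, I would invoke Lemma~\ref{lem:bias.variance.IGam.kernel} to identify the limiting variance: since $b\to 0$,
\begin{equation*}
\VV(Z_{1,n}) = n\,\VV(\hat{F}_{n,b}^{\mathrm{IGam}}(x)) = F(x)(1-F(x)) + \oo_x(1) \longrightarrow \sigma^2(x),
\end{equation*}
which is strictly positive under the hypothesis $0 < F(x) < 1$. Second, I would verify the Lyapunov condition with $\delta = 1$. Using $|Z_{i,n}| \leq 1$ to bound $\EE |Z_{1,n}|^3 \leq 1$, we get
\begin{equation*}
\frac{\sum_{i=1}^n \EE|Z_{i,n}|^3}{\big(\sum_{i=1}^n \VV(Z_{i,n})\big)^{3/2}} \leq \frac{n}{\big(n\,[\sigma^2(x) + \oo_x(1)]\big)^{3/2}} = \OO_x(n^{-1/2}) \longrightarrow 0,
\end{equation*}
and the Lyapunov CLT delivers $n^{1/2}(\hat{F}_{n,b}^{\mathrm{IGam}}(x) - \EE[\hat{F}_{n,b}^{\mathrm{IGam}}(x)]) \stackrel{\mathscr{D}}{\longrightarrow} \mathcal{N}(0,\sigma^2(x))$.

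For the two ``In particular'' statements, I would combine this central convergence with the bias expansion from Lemma~\ref{lem:bias.variance.IGam.kernel}, which yields the deterministic decomposition
\begin{equation*}
n^{1/2}(\hat{F}_{n,b}^{\mathrm{IGam}}(x) - F(x)) = n^{1/2}(\hat{F}_{n,b}^{\mathrm{IGam}}(x) - \EE[\hat{F}_{n,b}^{\mathrm{IGam}}(x)]) + n^{1/2} b \cdot \tfrac{x^2}{2} f'(x) + \oo_x(n^{1/2} b).
\end{equation*}
If $n^{1/2} b \to 0$ the shift vanishes, while if $n^{1/2} b \to \lambda$ it converges to $\lambda \cdot \tfrac{x^2}{2} f'(x)$; in either regime, Slutsky's theorem then closes the argument. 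The only genuine issue is ensuring the variance stays bounded away from zero so that the Lyapunov denominator is nondegenerate, but this is precisely what the hypothesis $0 < F(x) < 1$ guarantees through Lemma~\ref{lem:bias.variance.IGam.kernel}; everything else reduces to the elementary boundedness of survival functions.
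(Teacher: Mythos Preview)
Your proposal is correct and follows essentially the same route as the paper: write the centered estimator as an i.i.d.\ triangular array of bounded summands, use Lemma~\ref{lem:bias.variance.IGam.kernel} to pin down the limiting variance, and invoke a CLT for arrays. The only cosmetic difference is that the paper checks the Lindeberg condition directly (using $|Z_{1,b}|\le 2$ so the truncated expectation vanishes for large $n$), whereas you verify the stronger Lyapunov condition with $\delta=1$; since boundedness makes both verifications immediate, this is not a genuinely different argument.
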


\section{Asymptotic properties of the c.d.f.\ estimator with LN kernel}\label{sec:LN.kernel.results}

In this section, we find the asymptotic properties of the LogNormal (LN) kernel estimator defined in \eqref{def:LN.kernel.estimator}.

\begin{lemma}[{\color{blue} Bias and variance}]\label{lem:bias.variance.LN.kernel}
    For any given $x\in (0,\infty)$,
    \begin{align}
        &\BB[\hat{F}_{n,b}^{\mathrm{LN}}(x)] \leqdef \EE[\hat{F}_{n,b}^{\mathrm{LN}}(x)] - F(x) = b \cdot \frac{x}{2} (f(x) + x f'(x)) + \oo_x(b), \\
        &\VV(\hat{F}_{n,b}^{\mathrm{LN}}(x)) = n^{-1} F(x) (1 - F(x)) - n^{-1} b^{1/2} \cdot \frac{x f(x)}{\sqrt{\pi}} + \OO_x(n^{-1} b).
    \end{align}
\end{lemma}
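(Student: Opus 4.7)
The plan is to exploit the probabilistic interpretation of the LN survival kernel. If $Z\sim \mathcal{N}(0,1)$ is independent of the sample and we set $Y_{x,b} = x e^{\sqrt{b}\, Z}$, then $Y_{x,b}\sim \mathrm{LogNormal}(\log x,\sqrt{b})$ and $\overline{K}_{\mathrm{LN}}(t\nvert \log x,\sqrt{b}) = \PP(Y_{x,b} > t)$. Consequently, for an independent $X\sim F$,
\[
\EE[\hat{F}_{n,b}^{\mathrm{LN}}(x)] = \EE[\overline{K}_{\mathrm{LN}}(X\nvert \log x,\sqrt{b})] = \PP(X < Y_{x,b}) = \EE[F(Y_{x,b})] = \EE[F(x e^{\sqrt{b}\,Z})].
\]

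To get the bias, I would Taylor expand $F$ around $x$ with increment $h = x(e^{\sqrt{b}\,Z}-1)$, using the two bounded continuous derivatives of $F$:
\[
F(x + h) = F(x) + f(x)\, h + \tfrac{1}{2} f'(x)\, h^2 + R_x(h),
\]
where the remainder is controlled by the modulus of continuity of $f'$. Since the MGF of $\sqrt{b}Z$ is explicit, I compute $\EE[h] = x(e^{b/2}-1) = \tfrac{xb}{2} + O(b^2)$ and $\EE[h^2] = x^2(e^{2b} - 2 e^{b/2} + 1) = x^2 b + O(b^2)$. Collecting terms yields
\[
\EE[\hat{F}_{n,b}^{\mathrm{LN}}(x)] - F(x) = f(x)\cdot \tfrac{xb}{2} + \tfrac{f'(x)}{2}\cdot x^2 b + \oo_x(b) = b \cdot \tfrac{x}{2}(f(x) + x f'(x)) + \oo_x(b),
\]
which is the announced bias. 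The remainder $\EE[R_x(h)]$ is $\oo_x(b)$ because $\EE[h^2] = O_x(b)$ and $f'$ is continuous, so dominated convergence gives $\EE[R_x(h)/h^2]\to 0$.

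For the variance, since the $X_i$'s are i.i.d., it suffices to compute $\VV(\overline{K}_{\mathrm{LN}}(X\nvert \log x,\sqrt{b}))$ and divide by $n$. I use the decomposition
\[
\VV(\overline{K}) = \EE[\overline{K}]\bigl(1 - \EE[\overline{K}]\bigr) - \EE[\overline{K}(1-\overline{K})].
\]
The first piece equals $F(x)(1-F(x)) + \OO_x(b)$ by the bias estimate already obtained. For the second piece I change variables $t = x e^{\sqrt{b}\,u}$ in the integral, obtaining
\[
\EE[\overline{K}(1-\overline{K})] = \sqrt{b}\int_{-\infty}^{\infty} \Phi(u)(1-\Phi(u))\cdot x e^{\sqrt{b}\,u} f(x e^{\sqrt{b}\,u})\, \rd u.
\]
Writing $x e^{\sqrt{b}\,u} f(x e^{\sqrt{b}\,u}) = x f(x) + \OO_x(\sqrt{b}\,u)$ for bounded $u$, and using the exponential decay of $\Phi(u)(1-\Phi(u))$ in $|u|$ to discard the tail contribution, the leading term is $\sqrt{b}\, x f(x) \int_{\R} \Phi(u)(1-\Phi(u))\,\rd u$ plus an $\OO_x(b)$ correction. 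The integral evaluates to $1/\sqrt{\pi}$, which I would justify by the Fubini identity $\int_{\R}\Phi(u)(1-\Phi(u))\,\rd u = \EE[(Z_2 - Z_1)_+]$ for i.i.d.\ $Z_1,Z_2\sim\mathcal{N}(0,1)$, giving $\sqrt{2}\cdot (2\pi)^{-1/2} = \pi^{-1/2}$. Combining the two pieces gives the announced variance.

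The main obstacle is the variance expansion: one must verify that the product $\Phi(u)(1-\Phi(u))$ truly tempers the (unbounded) factor $x e^{\sqrt{b}\,u} f(x e^{\sqrt{b}\,u})$ uniformly in $b\to 0$ for the remainder to be $\OO_x(b)$ rather than $\oo_x(1)$. The cleanest way is to split $\R$ into $|u|\leq b^{-1/4}$ (where a first-order Taylor expansion of $g(y)=yf(y)$ around $x$ applies, contributing $\OO_x(b)$) and $|u|>b^{-1/4}$ (where the Gaussian tail of $\Phi(u)(1-\Phi(u))$ dominates any polynomial factor from $e^{\sqrt{b}\,u}f(xe^{\sqrt{b}\,u})$, giving a super-polynomially small contribution). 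These technical estimates are exactly the sort of computation one would bundle into Appendix~\ref{sec:tech.lemmas}.
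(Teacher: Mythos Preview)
Your bias argument is essentially identical to the paper's: both write $\EE[\hat{F}_{n,b}^{\mathrm{LN}}(x)]=\EE[F(T)]$ with $T\sim\mathrm{LN}(\log x,\sqrt{b})$, Taylor expand, and plug in the lognormal moments $\EE[T-x]=x(e^{b/2}-1)$ and $\EE[(T-x)^2]=x^2(e^{2b}-2e^{b/2}+1)$.

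Your variance argument, however, takes a genuinely different route. The paper writes $\EE[\overline{K}_{\mathrm{LN}}^2(X_1)]=\EE[F(S)]$ with $S=\min\{T_1,T_2\}$ for i.i.d.\ $T_1,T_2\sim\mathrm{LN}(\log x,\sqrt{b})$, then Taylor expands $F$ around $x$ and invokes a separate technical lemma (Corollary~\ref{cor:tech.var.LN}) that computes $\EE[\min\{T_1,T_2\}-x]$ and $\EE[(\min\{T_1,T_2\}-x)^2]$ explicitly via a rotation in the underlying Gaussian plane. You instead use the algebraic identity $\VV(\overline{K})=\EE[\overline{K}](1-\EE[\overline{K}])-\EE[\overline{K}(1-\overline{K})]$ and evaluate the last term directly by the substitution $t=xe^{\sqrt{b}u}$, reducing everything to the scalar identity $\int_{\R}\Phi(u)(1-\Phi(u))\,\rd u=\EE[(Z_2-Z_1)_+]=1/\sqrt{\pi}$. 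Your approach is more elementary and self-contained for the LN kernel (no auxiliary moment lemma needed), and it makes the appearance of $1/\sqrt{\pi}$ transparent. The paper's approach, on the other hand, is a template that it reuses verbatim for the Gam, IGam, IGau and RIG kernels, at the cost of a kernel-specific moment computation each time; your change of variables is tailored to the fact that $\overline{K}_{\mathrm{LN}}$ is $1-\Phi$ of an affine function of $\log t$, so it does not transfer as cleanly to the other kernels. Both arguments are correct, and your tail/interior splitting at $|u|=b^{-1/4}$ is the right way to justify the $\OO_x(b)$ remainder.
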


\begin{corollary}[{\color{blue} Mean squared error}]\label{cor:MSE.LN.kernel}
    For any given $x\in (0,\infty)$,
    \begin{align}
        \mathrm{MSE}(\hat{F}_{n,b}^{\mathrm{LN}}(x))
        &= \VV(\hat{F}_{n,b}^{\mathrm{LN}}(x)) + \big(\BB[\hat{F}_{n,b}^{\mathrm{LN}}(x)]\big)^2 \notag \\
        &= n^{-1} F(x) (1 - F(x)) - n^{-1} b^{1/2} \cdot \frac{x f(x)}{\sqrt{\pi}} \notag \\[-1mm]
        &\quad+ b^2 \cdot \frac{x^2}{4} (f(x) + x f'(x))^2 + \OO_x(n^{-1} b) + \oo_x(b^2).
    \end{align}
    In particular, if $f(x) \cdot (f(x) + x f'(x)) \neq 0$, the asymptotically optimal choice of $b$, with respect to $\mathrm{MSE}$, is
    \begin{equation}
        b_{\mathrm{opt}} = n^{-2/3} \left[\frac{4 \cdot \frac{x^2}{4} (f(x) + x f'(x))^2}{x f(x) / \sqrt{\pi}}\right]^{-2/3}
    \end{equation}
    with
    \begin{equation}
        \begin{aligned}
            \mathrm{MSE}(\hat{F}_{n,b_{\mathrm{opt}}}^{\mathrm{LN}}(x))
            &= n^{-1} F(x) (1 - F(x)) \\[-1.5mm]
            &\quad- n^{-4/3} \, \frac{3}{4} \left[\frac{(x f(x) / \sqrt{\pi})^4}{4 \cdot \frac{x^2}{4} (f(x) + x f'(x))^2}\right]^{1/3} \hspace{-2mm} + \oo_x(n^{-4/3}).
        \end{aligned}
    \end{equation}
\end{corollary}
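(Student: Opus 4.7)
The corollary is an immediate consequence of Lemma~\ref{lem:bias.variance.LN.kernel} combined with the standard bias-variance decomposition
\begin{equation*}
\mathrm{MSE}(\hat F_{n,b}^{\mathrm{LN}}(x)) = \VV(\hat F_{n,b}^{\mathrm{LN}}(x)) + \bigl(\BB[\hat F_{n,b}^{\mathrm{LN}}(x)]\bigr)^{2},
\end{equation*}
so my plan is essentially a bookkeeping exercise in two stages. First, I would substitute the asymptotic expression for $\BB[\hat F_{n,b}^{\mathrm{LN}}(x)]$ from Lemma~\ref{lem:bias.variance.LN.kernel} into the squared bias term. Squaring $b\cdot\tfrac{x}{2}(f(x)+xf'(x)) + \oo_x(b)$ produces the leading term $b^{2}\cdot\tfrac{x^{2}}{4}(f(x)+xf'(x))^{2}$; the two cross terms contribute $b\cdot \oo_x(b)=\oo_x(b^{2})$, and the remainder is $(\oo_x(b))^{2}=\oo_x(b^{2})$. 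Adding this to the variance expression from the same lemma yields precisely the stated formula for $\mathrm{MSE}(\hat F_{n,b}^{\mathrm{LN}}(x))$.

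Second, to obtain the asymptotically optimal bandwidth, I would discard the $b$-free term $n^{-1}F(x)(1-F(x))$, the higher-order $\OO_x(n^{-1}b)$ and $\oo_x(b^{2})$ remainders, and minimize the dominant $b$-dependent function
\begin{equation*}
g(b) \leqdef -n^{-1} b^{1/2}\, A + b^{2}\, B, \qquad A \leqdef \tfrac{x f(x)}{\sqrt{\pi}},\ B \leqdef \tfrac{x^{2}}{4}(f(x)+xf'(x))^{2}.
\end{equation*}
Setting $g'(b)=-\tfrac{1}{2}n^{-1}b^{-1/2}A + 2bB = 0$ gives $b^{3/2} = A/(4Bn)$, hence the stated value $b_{\mathrm{opt}} = n^{-2/3}\bigl[4B/A\bigr]^{-2/3}$ (the positivity assumption $f(x)\cdot(f(x)+xf'(x))\neq 0$ is exactly what ensures both $A$ and $B$ are nonzero so that this critical point exists and is a strict minimizer, as confirmed by $g''(b_{\mathrm{opt}})>0$).

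Third, I would plug $b_{\mathrm{opt}}$ back into $g$. Using $b_{\mathrm{opt}}^{1/2}=n^{-1/3}(A/(4B))^{1/3}$ and $b_{\mathrm{opt}}^{2}=n^{-4/3}(A/(4B))^{4/3}$, one computes
\begin{equation*}
g(b_{\mathrm{opt}}) = n^{-4/3}\Bigl(\tfrac{A}{4B}\Bigr)^{1/3}\Bigl[-A + B\cdot\tfrac{A}{4B}\Bigr] = -\tfrac{3}{4}\, n^{-4/3}\Bigl(\tfrac{A^{4}}{4B}\Bigr)^{1/3},
\end{equation*}
which is the stated leading correction once $A$ and $B$ are substituted back. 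Finally, I would verify that the omitted error terms remain negligible at the optimum: since $b_{\mathrm{opt}}=\OO_x(n^{-2/3})$, we have $n^{-1}b_{\mathrm{opt}}=\OO_x(n^{-5/3})=\oo_x(n^{-4/3})$ and $\oo_x(b_{\mathrm{opt}}^{2})=\oo_x(n^{-4/3})$, which absorb into the final $\oo_x(n^{-4/3})$ term. There is no genuine obstacle here; the only thing requiring attention is this last verification that the remainders from Lemma~\ref{lem:bias.variance.LN.kernel} are of strictly smaller order than the displayed $n^{-4/3}$ correction at $b=b_{\mathrm{opt}}$.
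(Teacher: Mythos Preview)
Your proposal is correct and matches the paper's approach exactly: the paper treats this corollary as an immediate consequence of Lemma~\ref{lem:bias.variance.LN.kernel} via the bias--variance decomposition, and does not even provide a separate proof. Your bookkeeping of the squared bias, the minimization of the leading $b$-dependent part, and the verification that the remainder terms are $\oo_x(n^{-4/3})$ at $b_{\mathrm{opt}}$ are all accurate.
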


\begin{proposition}[{\color{blue} Mean integrated squared error}]\label{prop:MISE.LN.kernel}
    Assuming that the target density $f = F'$ satisfies
    \begin{equation}
        \int_0^{\infty} x f(x) \rd x < \infty \quad \text{and} \quad \int_0^{\infty} x^2 (f(x) + x f'(x))^2 \rd x < \infty,
    \end{equation}
    then we have
    \begin{align}
        \mathrm{MISE}(\hat{F}_{n,b}^{\mathrm{LN}})
        &= \int_0^{\infty} \VV(\hat{F}_{n,b}^{\mathrm{LN}}(x)) \rd x + \int_0^{\infty} \big(\BB[\hat{F}_{n,b}^{\mathrm{LN}}(x)]\big)^2 \rd x \notag \\
        &= n^{-1} \int_0^{\infty} F(x) (1 - F(x)) \rd x - n^{-1} b^{1/2} \int_0^{\infty} \frac{x f(x)}{\sqrt{\pi}} \rd x \notag \\[-1mm]
        &\quad+ b^2 \int_0^{\infty} \frac{x^2}{4} (f(x) + x f'(x))^2 \rd x + \oo(n^{-1} b^{1/2}) + \oo(b^2).
    \end{align}
    In particular, if $f(x) \cdot (f(x) + x f'(x)) \neq 0$, the asymptotically optimal choice of $b$, with respect to $\mathrm{MISE}$, is
    \vspace{-3mm}
    \begin{equation}\label{eq:b.opt.MISE.LN}
        b_{\mathrm{opt}} = n^{-2/3} \left[\frac{4 \int_0^{\infty} \frac{x^2}{4} (f(x) + x f'(x))^2 \rd x}{\int_0^{\infty} x f(x) / \sqrt{\pi} \rd x}\right]^{-2/3}
    \end{equation}
    with
    \begin{equation}
        \begin{aligned}
            \mathrm{MISE}(\hat{F}_{n,b_{\mathrm{opt}}}^{\mathrm{LN}})
            &= n^{-1} \int_0^{\infty} F(x) (1 - F(x)) \rd x \\[-1.5mm]
            &\quad- n^{-4/3} \, \frac{3}{4} \left[\frac{\big(\int_0^{\infty} x f(x) / \sqrt{\pi} \rd x\big)^4}{4 \int_0^{\infty} \frac{x^2}{4} (f(x) + x f'(x))^2 \rd x}\right]^{1/3} \\
            &\quad+ \oo(n^{-4/3}).
        \end{aligned}
    \end{equation}
\end{proposition}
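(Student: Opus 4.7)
The plan is to integrate the pointwise bias/variance expansions from Lemma~\ref{lem:bias.variance.LN.kernel} over $[0,\infty)$, and then perform a one-dimensional optimization in $b$ to obtain $b_{\mathrm{opt}}$ and the resulting leading-order MISE.

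First I would write $\mathrm{MISE}(\hat{F}_{n,b}^{\mathrm{LN}}) = \int_0^\infty \VV(\hat{F}_{n,b}^{\mathrm{LN}}(x))\rd x + \int_0^\infty \bigl(\BB[\hat{F}_{n,b}^{\mathrm{LN}}(x)]\bigr)^2 \rd x$ (this is just the bias-variance decomposition, integrated). Substituting the expansions from Lemma~\ref{lem:bias.variance.LN.kernel} gives, formally, the four leading terms $n^{-1}\int F(1-F)$, $-n^{-1}b^{1/2}\int x f/\sqrt{\pi}$, $b^2\int \tfrac{x^2}{4}(f+xf')^2$, plus integrated error terms. The two integrability assumptions stated in the proposition are exactly what is needed to make the two nontrivial leading integrals finite. (The variance-dominating term $n^{-1}\int F(1-F)$ is finite under the moment conditions inherited from the target: $\int x f < \infty$ implies $\int (1-F) < \infty$ near $\infty$.)

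The main obstacle, and the step that needs real care, is upgrading the pointwise $\OO_x(n^{-1}b)$ remainder in the variance and the $\oo_x(b)$ remainder in the bias into the \emph{global} rates $\oo(n^{-1}b^{1/2})$ and $\oo(b^2)$ that appear in the statement. To do this, I would revisit the derivation of Lemma~\ref{lem:bias.variance.LN.kernel} (carried out in the appendix via the technical lemmas in Appendix~\ref{sec:tech.lemmas}) and track how the $x$-dependence enters the remainders, so that the squared bias remainder is pointwise dominated (for $b$ small enough) by an integrable multiple of $x^2(f(x)+xf'(x))^2$, and the variance remainder by an integrable multiple of $xf(x)$. Then dominated convergence delivers integrated remainders of the asserted order $\oo(b^2)+\oo(n^{-1}b^{1/2})$. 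A side observation is that the cross term $2\cdot n^{-1}b^{1/2}\cdot b \cdot (\cdots)$ coming from squaring the bias and the first-order variance correction is of smaller order $\oo(n^{-1}b^{1/2})+\oo(b^2)$ and can be absorbed.

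Once the expansion for $\mathrm{MISE}$ is established, the optimization is routine. Writing $A \leqdef \int_0^\infty xf(x)/\sqrt{\pi}\,\rd x$ and $B \leqdef \int_0^\infty \tfrac{x^2}{4}(f(x)+xf'(x))^2\,\rd x$, the leading $b$-dependent part is $-n^{-1}A b^{1/2} + B b^2$. Differentiating with respect to $b$ and setting the derivative to zero yields $b^{3/2} = A/(4nB)$, i.e.\ $b_{\mathrm{opt}} = n^{-2/3}(4B/A)^{-2/3}$, which matches \eqref{eq:b.opt.MISE.LN}. Substituting this back into the expansion and simplifying the two $b$-dependent contributions combines them into $-\tfrac{3}{4}n^{-4/3}(A^4/(4B))^{1/3}$, giving the asserted closed form for $\mathrm{MISE}(\hat{F}_{n,b_{\mathrm{opt}}}^{\mathrm{LN}})$, with remainder $\oo(n^{-4/3})$ coming from substituting $b_{\mathrm{opt}}\asymp n^{-2/3}$ into the $\oo(n^{-1}b^{1/2})+\oo(b^2)$ terms.
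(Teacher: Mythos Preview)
Your approach is correct and is exactly the natural route: integrate the pointwise bias/variance expansions of Lemma~\ref{lem:bias.variance.LN.kernel}, use the two integrability hypotheses to justify finiteness of the leading terms, control the integrated remainders, and then do the elementary one-variable optimization in $b$. Your identification of the only nontrivial step --- upgrading the pointwise $\oo_x(b)$ and $\OO_x(n^{-1}b)$ remainders to global $\oo(b^2)$ and $\oo(n^{-1}b^{1/2})$ via an integrable envelope and dominated convergence --- is on point, and your optimization calculation is correct.

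A word on the comparison: the paper does not actually give a separate proof of this proposition. The proof block labeled ``Proof of Proposition~\ref{prop:MISE.LN.kernel}'' in Appendix~\ref{sec:proof.results.LN.kernel} is in fact the Lindeberg-condition argument for the asymptotic normality result (Proposition~\ref{prop:asymptotic.normality.LN.kernel}); the same mislabeling occurs for all five kernels. The MISE statement is evidently treated as an immediate consequence of Lemma~\ref{lem:bias.variance.LN.kernel} by integration and routine optimization, which is precisely what you outline. So your proposal fills in the details the paper leaves implicit, and does so by the intended method.
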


\begin{proposition}[{\color{blue} Asymptotic normality}]\label{prop:asymptotic.normality.LN.kernel}
    For any $x > 0$ such that $0 < F(x) < 1$, we have the following convergence in distribution:
    \begin{equation}
        n^{1/2} (\hat{F}_{n,b}^{\mathrm{LN}}(x) - \EE[\hat{F}_{n,b}^{\mathrm{LN}}(x)]) \stackrel{\mathscr{D}}{\longrightarrow} \mathcal{N}(0, \sigma^2(x)), \quad \text{as } b\to 0, \, n\to \infty,
    \end{equation}
    where $\sigma^2(x) \leqdef F(x) (1 - F(x))$.
    In particular, Lemma~\ref{lem:bias.variance.LN.kernel} implies
    \begin{alignat}{3}
        &n^{1/2} (\hat{F}_{n,b}^{\mathrm{LN}}(x) - F(x)) \stackrel{\mathscr{D}}{\longrightarrow} \mathcal{N}(0, \sigma^2(x)), \quad &&\text{if } n^{1/2} b \to 0, \\
        &n^{1/2} (\hat{F}_{n,b}^{\mathrm{LN}}(x) - F(x)) \stackrel{\mathscr{D}}{\longrightarrow} \mathcal{N}(\lambda \cdot \frac{x}{2} (f(x) + x f'(x)), \sigma^2(x)), \quad &&\text{if } n^{1/2} b \to \lambda,
    \end{alignat}
    for any constant $\lambda > 0$.
\end{proposition}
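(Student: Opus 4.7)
The plan is to apply a triangular-array central limit theorem to the i.i.d.\ summands, using boundedness of the lognormal survival function. Write
\begin{equation*}
    \hat{F}_{n,b}^{\mathrm{LN}}(x) - \EE[\hat{F}_{n,b}^{\mathrm{LN}}(x)] = \frac{1}{n}\sum_{i=1}^n \big(Y_{i,n} - \EE[Y_{i,n}]\big), \qquad Y_{i,n} \leqdef \overline{K}_{\mathrm{LN}}(X_i \nvert \log x, \sqrt{b}),
\end{equation*}
where, for each $n$, the $Y_{i,n}$ are i.i.d.\ in $[0,1]$ because $\overline{K}_{\mathrm{LN}}(\cdot \nvert \log x, \sqrt{b})$ is a survival function. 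Hence $|Y_{i,n} - \EE Y_{i,n}| \leq 1$ almost surely.

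First, I would identify the limiting variance. By Lemma~\ref{lem:bias.variance.LN.kernel},
\begin{equation*}
    s_n^2 \leqdef n\,\VV(\hat{F}_{n,b}^{\mathrm{LN}}(x)) = \VV(Y_{1,n}) \longrightarrow F(x)(1-F(x)) = \sigma^2(x),
\end{equation*}
and the assumption $0 < F(x) < 1$ ensures $\sigma^2(x) > 0$, so $s_n^2$ is bounded away from $0$ for $n$ large.

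Next, I would verify Lyapunov's condition for the triangular array with $\delta = 1$. Using $|Y_{i,n} - \EE Y_{i,n}| \leq 1$,
\begin{equation*}
    \frac{1}{(n s_n^2)^{3/2}} \sum_{i=1}^n \EE\big|Y_{i,n} - \EE Y_{i,n}\big|^3 \leq \frac{n}{(n s_n^2)^{3/2}} = \frac{1}{n^{1/2} s_n^{3}} \longrightarrow 0,
\end{equation*}
since $s_n \to \sigma(x) > 0$. Lyapunov's CLT then yields
\begin{equation*}
    \frac{1}{\sqrt{n}\,s_n}\sum_{i=1}^n \big(Y_{i,n} - \EE Y_{i,n}\big) \stackrel{\mathscr{D}}{\longrightarrow} \mathcal{N}(0,1),
\end{equation*}
and multiplying by $s_n \to \sigma(x)$ (Slutsky) gives the stated convergence $n^{1/2}(\hat{F}_{n,b}^{\mathrm{LN}}(x) - \EE[\hat{F}_{n,b}^{\mathrm{LN}}(x)]) \stackrel{\mathscr{D}}{\longrightarrow} \mathcal{N}(0,\sigma^2(x))$.

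Finally, for the two corollaries, I would write $n^{1/2}(\hat{F}_{n,b}^{\mathrm{LN}}(x) - F(x)) = n^{1/2}(\hat{F}_{n,b}^{\mathrm{LN}}(x) - \EE[\hat{F}_{n,b}^{\mathrm{LN}}(x)]) + n^{1/2}\BB[\hat{F}_{n,b}^{\mathrm{LN}}(x)]$ and insert the bias expansion from Lemma~\ref{lem:bias.variance.LN.kernel}: the deterministic shift equals $n^{1/2} b \cdot \tfrac{x}{2}(f(x) + x f'(x)) + \oo_x(n^{1/2} b)$, which tends to $0$ if $n^{1/2} b \to 0$ and to $\lambda \cdot \tfrac{x}{2}(f(x) + x f'(x))$ if $n^{1/2} b \to \lambda$. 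Slutsky then delivers both limits. There is essentially no main obstacle here since boundedness trivializes the Lyapunov check; the only ingredient not proved in this paragraph is the asymptotic variance, which is supplied by the preceding lemma.
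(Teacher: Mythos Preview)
Your proof is correct and follows essentially the same approach as the paper: decompose into centered i.i.d.\ bounded summands, invoke the variance asymptotics from Lemma~\ref{lem:bias.variance.LN.kernel}, and apply a triangular-array CLT. The only cosmetic difference is that the paper verifies Lindeberg's condition (the indicator set $\{|Z_{1,b}| > \varepsilon n^{1/2} s_b\}$ is eventually empty by boundedness), whereas you verify the stronger Lyapunov condition with $\delta=1$; both checks are trivialized by the uniform bound on the summands, and your explicit Slutsky argument for the bias-shifted corollaries is slightly more detailed than the paper's.
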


\section{Asymptotic properties of the c.d.f.\ estimator with IGau kernel}\label{sec:IGau.kernel.results}

In this section, we find the asymptotic properties of the inverse Gaussian (IGau) kernel estimator defined in \eqref{def:IGau.kernel.estimator}.

\begin{lemma}[{\color{blue} Bias and variance}]\label{lem:bias.variance.IGau.kernel}
    For any given $x\in (0,\infty)$,
    \begin{align}
        \BB[\hat{F}_{n,b}^{\mathrm{IGau}}(x)]
        &\leqdef \EE[\hat{F}_{n,b}^{\mathrm{IGau}}(x)] - F(x) = b \cdot \frac{x^2}{2} f'(x) + \oo_x(b), \\[1mm]
        \VV(\hat{F}_{n,b}^{\mathrm{IGau}}(x))
        &= n^{-1} F(x) (1 - F(x)) \notag \\
        &\quad- n^{-1} b^{1/2} \cdot \frac{f(x)}{2} \Big[\lim_{b\to 0} b^{-1/2} \, \EE[|T_1 - T_2|]\Big] \notag \\
        &\quad+ \OO_x(n^{-1} b),
    \end{align}
    where $T_1,T_2 \stackrel{\mathrm{i.i.d.}}{\sim} \mathrm{IGau}(x, b^{-1} x)$.
\end{lemma}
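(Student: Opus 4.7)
The plan is to reduce both claims to properties of a single auxiliary random variable. Introduce $T \sim \mathrm{IGau}(x, b^{-1}x)$, independent of $X_1, \ldots, X_n$. By definition, $\overline{K}_{\mathrm{IGau}}(\cdot \nvert x, b^{-1}x)$ is the survival function of $T$, so $\overline{K}_{\mathrm{IGau}}(X_1 \nvert x, b^{-1}x) = \PP(T > X_1 \nvert X_1)$. Fubini and continuity of $F$ then yield
\begin{equation*}
    \EE[\hat{F}_{n,b}^{\mathrm{IGau}}(x)] = \PP(T > X_1) = \EE[F(T)].
\end{equation*}
For the bias, I would Taylor-expand $F$ to second order around $x$ (legitimate under Assumption~1) and exploit the moments $\EE[T] = x$ and $\VV(T) = x^3/(b^{-1}x) = b x^2$ of the inverse Gaussian law. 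Together with uniform continuity of $f'$, this produces $\EE[F(T)] = F(x) + \tfrac{1}{2} f'(x) \cdot b x^2 + \oo_x(b)$, hence the claimed bias formula.

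For the variance, write $\overline{K} \equiv \overline{K}_{\mathrm{IGau}}(\cdot \nvert x, b^{-1}x)$, use $\VV(\hat{F}_{n,b}^{\mathrm{IGau}}(x)) = n^{-1} \VV(\overline{K}(X_1))$, and apply the elementary identity
\begin{equation*}
    \VV(\overline{K}(X_1)) = \EE[\overline{K}(X_1)]\bigl(1 - \EE[\overline{K}(X_1)]\bigr) - \EE[\overline{K}(X_1)(1 - \overline{K}(X_1))].
\end{equation*}
The first term equals $F(x)(1-F(x)) + \OO_x(b)$ by the bias step. For the second, bring in two i.i.d.\ copies $T_1, T_2 \sim \mathrm{IGau}(x, b^{-1}x)$, independent of $X_1$. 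Since $\overline{K}(X_1)(1 - \overline{K}(X_1)) = \PP(T_1 > X_1, T_2 \leq X_1 \nvert X_1)$, exchangeability of $(T_1, T_2)$ and conditioning give
\begin{equation*}
    \EE[\overline{K}(X_1)(1 - \overline{K}(X_1))] = \PP(T_2 \leq X_1 < T_1) = \tfrac{1}{2} \EE\bigl[|F(T_1) - F(T_2)|\bigr].
\end{equation*}
The mean value theorem plus $|f(\xi) - f(x)| \leq \|f'\|_\infty \max(|T_1-x|, |T_2-x|)$, combined with Cauchy--Schwarz using $\EE[(T_i - x)^2] = b x^2$, absorbs the remainder into $\OO_x(b)$ and produces $\EE[|F(T_1)-F(T_2)|] = f(x) \EE[|T_1-T_2|] + \OO_x(b)$. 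Dividing by $n$ gives the stated variance.

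The main technical obstacle is matching the quoted remainder $\OO_x(n^{-1} b)$, which requires the substitution $\EE[|T_1-T_2|] = b^{1/2} \cdot \lim_{b \to 0} b^{-1/2} \EE[|T_1-T_2|] + \OO_x(b)$, i.e.\ a rate of order $\sqrt b$ for the convergence of $b^{-1/2}\EE[|T_1-T_2|]$ to its limit. This is a second-order normal-approximation statement for the inverse Gaussian as $\lambda = b^{-1}x \to \infty$: existence of the limit follows from the weak convergence $(T-x)/(x\sqrt{b}) \stackrel{\mathscr{D}}{\longrightarrow} \mathcal{N}(0,1)$ together with uniform integrability (guaranteed by $\VV((T-x)/(x\sqrt{b})) = 1$), while the $\sqrt b$ rate comes from an Edgeworth-type expansion of the IGau density or a direct computation of the leading correction of $\EE[|T_1 - T_2|]$. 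I would package this as a separate technical lemma in Appendix~\ref{sec:tech.lemmas}; everything else is routine Taylor expansion and moment bookkeeping along the lines of the analogous proofs for the Gam, IGam, and LN kernels already treated in the preceding sections.
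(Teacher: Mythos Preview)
Your bias argument is essentially identical to the paper's: both write $\EE[\hat F_{n,b}^{\mathrm{IGau}}(x)] = \EE[F(T)]$ for $T\sim\mathrm{IGau}(x,b^{-1}x)$, Taylor-expand to second order, and read off $\EE[T-x]=0$, $\EE[(T-x)^2]=bx^2$.

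For the variance you take a genuinely different (though closely related) route. The paper computes $\EE[\overline K^2(X_1)] = \EE[F(S)]$ with $S=\min\{T_1,T_2\}$, Taylor-expands $F(S)$ around $x$, and uses the representation $\min\{T_1,T_2\}=\tfrac12(T_1+T_2)-\tfrac12|T_1-T_2|$ together with $\EE[T_i]=x$ to get $\EE[S-x]=-\tfrac12\EE[|T_1-T_2|]$; the decomposition $\VV=n^{-1}\EE[\overline K^2]-n^{-1}(\EE[\overline K])^2$ then gives the result. You instead start from the identity $\VV(\overline K(X_1))=\EE[\overline K](1-\EE[\overline K])-\EE[\overline K(1-\overline K)]$ and evaluate the second term probabilistically as $\tfrac12\EE[|F(T_1)-F(T_2)|]$, which you then linearize to $\tfrac12 f(x)\EE[|T_1-T_2|]$. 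Both paths converge to the same key quantity; yours isolates the $F(x)(1-F(x))$ term more directly and avoids Taylor-expanding $F$ at the random point $S$, while the paper's version fits the uniform template used for the other kernels (where explicit formulas for the moments of $\min\{T_1,T_2\}$ are available in Appendix~\ref{sec:tech.lemmas}).

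One remark: the technical obstacle you flag---that passing from $\EE[|T_1-T_2|]$ to $b^{1/2}\cdot\lim_{b\to0}b^{-1/2}\EE[|T_1-T_2|]$ with remainder $\OO_x(b)$ requires an $\OO(\sqrt b)$ rate in the normal approximation---is real, and the paper's own proof makes exactly this substitution without justification. Your plan to supply it via an Edgeworth-type argument is the right instinct; the paper simply does not address it.
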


\begin{corollary}[{\color{blue} Mean squared error}]\label{cor:MSE.IGau.kernel}
    For any given $x\in (0,\infty)$,
    \begin{align}
        \mathrm{MSE}(\hat{F}_{n,b}^{\mathrm{IGau}}(x))
        &= \VV(\hat{F}_{n,b}^{\mathrm{IGau}}(x)) + \big(\BB[\hat{F}_{n,b}^{\mathrm{IGau}}(x)]\big)^2 \notag \\
        &= n^{-1} F(x) (1 - F(x)) - n^{-1} b^{1/2} \cdot \frac{f(x)}{2} \Big[\lim_{b\to 0} b^{-1/2} \, \EE[|T_1 - T_2|]\Big] \notag \\[-1mm]
        &\quad+ b^2 \cdot \frac{x^4}{4} (f'(x))^2 + \OO_x(n^{-1} b) + \oo_x(b^2),
    \end{align}
    where $T_1,T_2 \stackrel{\mathrm{i.i.d.}}{\sim} \mathrm{IGau}(x, b^{-1} x)$.
    The quantity $\lim_{b\to 0} b^{-1/2} \, \EE[|T_1 - T_2|]$ needs to be approximated numerically.
    In particular, if $f(x) \big[\lim_{b\to 0} b^{-1/2} \, \EE[|T_1 - T_2|]\big] \cdot f'(x) \neq 0$, the asymptotically optimal choice of $b$, with respect to $\mathrm{MSE}$, is
    \begin{equation}
        b_{\mathrm{opt}} = n^{-2/3} \left[\frac{4 \cdot \frac{x^4}{4} (f'(x))^2}{\frac{f(x)}{2} \lim_{b\to 0} b^{-1/2} \, \EE[|T_1 - T_2|]}\right]^{-2/3}
    \end{equation}
    with
    \begin{equation}
        \begin{aligned}
            \mathrm{MSE}(\hat{F}_{n,b_{\mathrm{opt}}}^{\mathrm{IGau}}(x))
            &= n^{-1} F(x) (1 - F(x)) \\[-1.5mm]
            &\quad- n^{-4/3} \, \frac{3}{4} \left[\frac{\big(\frac{f(x)}{2} \lim_{b\to 0} b^{-1/2} \, \EE[|T_1 - T_2|]\big)^4}{4 \cdot \frac{x^4}{4} (f'(x))^2}\right]^{1/3} \\
            &\quad+ \oo_x(n^{-4/3}).
        \end{aligned}
    \end{equation}
\end{corollary}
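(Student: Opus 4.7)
The plan is to deduce this corollary as a direct algebraic consequence of Lemma~\ref{lem:bias.variance.IGau.kernel} via the standard bias-variance decomposition. Squaring the bias expansion from the lemma gives $(\BB[\hat F_{n,b}^{\mathrm{IGau}}(x)])^2 = b^2 \cdot \tfrac{x^4}{4}(f'(x))^2 + \oo_x(b^2)$, since the cross term is $b \cdot \oo_x(b) = \oo_x(b^2)$ and the squared remainder is itself $\oo_x(b^2)$. Adding this to the variance expansion from the same lemma produces the claimed MSE formula, with cumulative remainder $\OO_x(n^{-1}b) + \oo_x(b^2)$.

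Next I would optimize the $b$-dependent leading part of the MSE, which can be written compactly as $-n^{-1} b^{1/2} A + b^2 B$, with the shorthands $A = \tfrac{f(x)}{2}\,\lim_{b\to 0} b^{-1/2}\,\EE[|T_1-T_2|]$ and $B = \tfrac{x^4}{4}(f'(x))^2$. Under the stated nondegeneracy hypothesis, both $A$ and $B$ are strictly positive. Differentiating with respect to $b$ and setting the derivative to zero yields $-\tfrac{1}{2} n^{-1} b^{-1/2} A + 2 b B = 0$, i.e.\ $b^{3/2} = \tfrac{A}{4B}\, n^{-1}$, equivalently $b_{\mathrm{opt}} = n^{-2/3}[4B/A]^{-2/3}$, exactly the displayed formula.

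Finally I would substitute $b_{\mathrm{opt}}$ back into the expansion. A direct computation yields $-n^{-1} b_{\mathrm{opt}}^{1/2} A = -n^{-4/3}(A^4/(4B))^{1/3}$ and $b_{\mathrm{opt}}^2 B = \tfrac{1}{4}\, n^{-4/3}(A^4/(4B))^{1/3}$, whose sum is $-\tfrac{3}{4}\, n^{-4/3}(A^4/(4B))^{1/3}$, which is precisely the coefficient displayed in the corollary. For the residuals evaluated at $b_{\mathrm{opt}} \sim n^{-2/3}$, one has $\OO_x(n^{-1}b_{\mathrm{opt}}) = \OO_x(n^{-5/3})$ and $\oo_x(b_{\mathrm{opt}}^2) = \oo_x(n^{-4/3})$, both absorbed into $\oo_x(n^{-4/3})$. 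Since Lemma~\ref{lem:bias.variance.IGau.kernel} is assumed, no genuine obstacle remains; the only mild care needed is tracking powers of $n$ in the error terms, which is immediate once $b_{\mathrm{opt}}$ is plugged in.
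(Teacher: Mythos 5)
Your proposal is correct and is exactly the argument the paper intends: the corollary is stated as an immediate consequence of Lemma~\ref{lem:bias.variance.IGau.kernel} (the paper gives no separate proof), obtained by the bias--variance decomposition followed by minimizing $-n^{-1}b^{1/2}A + b^{2}B$ in $b$ and substituting back. All of your algebra, including the coefficient $-\tfrac{3}{4}$ and the absorption of $\OO_x(n^{-1}b_{\mathrm{opt}})=\OO_x(n^{-5/3})$ into $\oo_x(n^{-4/3})$, checks out.
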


\begin{proposition}[{\color{blue} Mean integrated squared error}]\label{prop:MISE.IGau.kernel}
    Assuming that the target density $f = F'$ satisfies
    \begin{equation}
        \int_0^{\infty} f(x) \Big[\lim_{b\to 0} b^{-1/2} \, \EE[|T_1 - T_2|]\Big] \rd x < \infty \quad \text{and} \quad \int_0^{\infty} x^4 (f'(x))^2 \rd x < \infty,
    \end{equation}
    where $T_1,T_2 \stackrel{\mathrm{i.i.d.}}{\sim} \mathrm{IGau}(x, b^{-1} x)$, then we have
    \begin{align}
        \mathrm{MISE}(\hat{F}_{n,b}^{\mathrm{IGau}})
        &= \int_0^{\infty} \VV(\hat{F}_{n,b}^{\mathrm{IGau}}(x)) \rd x + \int_0^{\infty} \big(\BB[\hat{F}_{n,b}^{\mathrm{IGau}}(x)]\big)^2 \rd x \notag \\
        &= n^{-1} \int_0^{\infty} F(x) (1 - F(x)) \rd x \notag \\[-1mm]
        &\quad- n^{-1} b^{1/2} \int_0^{\infty} \frac{f(x)}{2} \Big[\lim_{b\to 0} b^{-1/2} \, \EE[|T_1 - T_2|]\Big] \rd x \notag \\
        &\quad+ b^2 \int_0^{\infty} \frac{x^4}{4} (f'(x))^2 \rd x + \oo(n^{-1} b^{1/2}) + \oo(b^2).
    \end{align}
    The quantity $\lim_{b\to 0} b^{-1/2} \, \EE[|T_1 - T_2|]$ needs to be approximated numerically.
    In particular, if $\int_0^{\infty} x^4(f'(x))^2 \rd x > 0$, the asymptotically optimal choice of $b$, with respect to $\mathrm{MISE}$, is
    \begin{equation}\label{eq:b.opt.MISE.IGau}
        b_{\mathrm{opt}} = n^{-2/3} \left[\frac{4 \int_0^{\infty} \frac{x^4}{4} (f'(x))^2 \rd x}{\int_0^{\infty} \frac{f(x)}{2} \lim_{b\to 0} b^{-1/2} \, \EE[|T_1 - T_2|] \rd x}\right]^{-2/3}
    \end{equation}
    with
    \begin{equation}
        \begin{aligned}
            \mathrm{MISE}(\hat{F}_{n,b_{\mathrm{opt}}}^{\mathrm{IGau}})
            &= n^{-1} \int_0^{\infty} F(x) (1 - F(x)) \rd x \\[-1.5mm]
            &\quad- n^{-4/3} \, \frac{3}{4} \left[\frac{\big(\int_0^{\infty} \frac{f(x)}{2} \lim_{b\to 0} b^{-1/2} \, \EE[|T_1 - T_2|] \rd x\big)^4}{4 \int_0^{\infty} \frac{x^4}{4} (f'(x))^2 \rd x}\right]^{1/3} \\
            &\quad+ \oo(n^{-4/3}).
        \end{aligned}
    \end{equation}
\end{proposition}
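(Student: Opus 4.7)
The plan is to follow exactly the same template used in Propositions~\ref{prop:MISE.G.kernel},~\ref{prop:MISE.IGam.kernel} and~\ref{prop:MISE.LN.kernel}, namely to integrate the pointwise variance and squared-bias expansions from Lemma~\ref{lem:bias.variance.IGau.kernel} over $(0,\infty)$ and then minimize the resulting leading-order expression in $b$.

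First, I would use the MSE decomposition
\begin{equation*}
    \mathrm{MISE}(\hat{F}_{n,b}^{\mathrm{IGau}}) = \int_0^{\infty} \VV(\hat{F}_{n,b}^{\mathrm{IGau}}(x))\, \rd x + \int_0^{\infty} \big(\BB[\hat{F}_{n,b}^{\mathrm{IGau}}(x)]\big)^2 \rd x,
\end{equation*}
substitute the pointwise expansions from Lemma~\ref{lem:bias.variance.IGau.kernel}, and argue that the $x$-integral and the asymptotic expansion can be exchanged. The integrability assumptions on $f(x)[\lim_{b\to 0} b^{-1/2}\EE|T_1-T_2|]$ and on $x^4(f'(x))^2$ are precisely what is needed so that the leading terms yield finite integrals; the squared-bias contribution $b^2 \int_0^\infty \tfrac{x^4}{4}(f'(x))^2 \rd x$ and the second-order variance contribution $-n^{-1}b^{1/2}\int_0^\infty \tfrac{f(x)}{2}[\lim_{b\to 0} b^{-1/2}\EE|T_1-T_2|]\rd x$ drop out immediately from linearity of the integral.

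The main obstacle is justifying that the remainder terms $\oo_x(b)$ in the bias and $\OO_x(n^{-1}b)$ in the variance, after integration in $x$, produce remainders of order $\oo(b^2)$ and $\oo(n^{-1}b^{1/2})$ respectively, rather than just $\oo_x$ and $\OO_x$ pointwise. This requires establishing a uniform-in-$x$ integrable dominating function, so that dominated convergence applies. Here I would rely on the technical estimates from Appendix~\ref{sec:tech.lemmas} (which presumably track the implicit constants in the IGau bias/variance bounds as functions of $x$ through $f$, $f'$, $f''$) together with the two integrability hypotheses and the boundedness of $F'$ and $F''$ assumed throughout the paper. This same step was the crucial technical point in Propositions~\ref{prop:MISE.G.kernel},~\ref{prop:MISE.IGam.kernel} and~\ref{prop:MISE.LN.kernel}, and I would mirror that treatment.

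Finally, having established the expansion
\begin{equation*}
    \mathrm{MISE}(\hat{F}_{n,b}^{\mathrm{IGau}}) = n^{-1} A - n^{-1} b^{1/2} B + b^2 C + \oo(n^{-1}b^{1/2}) + \oo(b^2),
\end{equation*}
with $A = \int_0^\infty F(x)(1-F(x))\rd x$, $B = \int_0^\infty \tfrac{f(x)}{2}[\lim_{b\to 0} b^{-1/2}\EE|T_1-T_2|]\rd x$, and $C = \int_0^\infty \tfrac{x^4}{4}(f'(x))^2 \rd x$, I differentiate the leading terms $-n^{-1}b^{1/2}B + b^2 C$ in $b$. Setting the derivative $-\tfrac{1}{2} n^{-1} b^{-1/2} B + 2 b\, C$ to zero yields $b_{\mathrm{opt}} = n^{-2/3}(4 C / B)^{-2/3}$, which is exactly \eqref{eq:b.opt.MISE.IGau}. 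Plugging $b_{\mathrm{opt}}$ back into the expansion produces the $-n^{-4/3}\,\tfrac{3}{4}(B^4/(4C))^{1/3}$ term stated in the proposition, since for general positive constants $B,C$ the optimized value of $-\beta B + \beta^4 C$ at $\beta = (B/(4C))^{1/3}$ equals $-\tfrac{3}{4}(B^4/(4C))^{1/3}$ with $\beta = n^{-1/3} b_{\mathrm{opt}}^{1/2}$. The condition $\int_0^\infty x^4(f'(x))^2\rd x > 0$ is exactly what is needed for $C > 0$ so that the optimization is non-degenerate.
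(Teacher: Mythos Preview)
Your outline is correct and is in fact more detailed than what the paper provides. Note that the paper contains a systematic labeling slip: the block tagged ``Proof of Proposition~\ref{prop:MISE.IGau.kernel}'' in Appendix~\ref{sec:proof.results.IGau.kernel} is actually the Lindeberg--condition argument for the asymptotic normality result (Proposition~\ref{prop:asymptotic.normality.IGau.kernel}), not a proof of the MISE expansion. The same mislabeling occurs for the Gam, IGam, LN and RIG kernels. The paper gives no separate argument for the MISE propositions; they are implicitly treated as direct consequences of integrating the pointwise bias/variance expansions from Lemma~\ref{lem:bias.variance.IGau.kernel} over $x$, under the stated integrability hypotheses, followed by the elementary calculus optimization you wrote out.

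One small caution on your sketch: Appendix~\ref{sec:tech.lemmas} contains explicit moment computations only for the Gam, IGam and LN kernels (Lemmas~\ref{lem:tech.var.G.prelim}, \ref{lem:tech.var.IGam.prelim}, \ref{lem:tech.var.LN.prelim} and their corollaries); there is no analogous lemma for the IGau kernel that would help you make the $\oo_x(b)$ and $\OO_x(n^{-1}b)$ remainders uniform in $x$. You were right to identify the passage from pointwise to integrated remainders as the genuine technical gap, but be aware that the paper itself does not address it either --- so you cannot simply ``mirror that treatment,'' since there is nothing to mirror. Your optimization at the end (setting $-\tfrac{1}{2}n^{-1}b^{-1/2}B + 2bC = 0$ and substituting back) is correct and matches the stated $b_{\mathrm{opt}}$ and optimal MISE.
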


\begin{proposition}[{\color{blue} Asymptotic normality}]\label{prop:asymptotic.normality.IGau.kernel}
    For any $x > 0$ such that $0 < F(x) < 1$, we have the following convergence in distribution:
    \begin{equation}
        n^{1/2} (\hat{F}_{n,b}^{\mathrm{IGau}}(x) - \EE[\hat{F}_{n,b}^{\mathrm{IGau}}(x)]) \stackrel{\mathscr{D}}{\longrightarrow} \mathcal{N}(0, \sigma^2(x)), \quad \text{as } b\to 0, \, n\to \infty,
    \end{equation}
    where $\sigma^2(x) \leqdef F(x) (1 - F(x))$.
    In particular, Lemma~\ref{lem:bias.variance.IGau.kernel} implies
    \begin{alignat}{3}
        &n^{1/2} (\hat{F}_{n,b}^{\mathrm{IGau}}(x) - F(x)) \stackrel{\mathscr{D}}{\longrightarrow} \mathcal{N}(0,\sigma^2(x)), \quad &&\text{if } n^{1/2} b \to 0, \\
        &n^{1/2} (\hat{F}_{n,b}^{\mathrm{IGau}}(x) - F(x)) \stackrel{\mathscr{D}}{\longrightarrow} \mathcal{N}(\lambda \cdot \frac{x^2}{2} f'(x), \sigma^2(x)), \quad &&\text{if } n^{1/2} b \to \lambda,
    \end{alignat}
    for any constant $\lambda > 0$.
\end{proposition}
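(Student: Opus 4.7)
The plan is to reduce the statement to a triangular-array central limit theorem for the i.i.d.\ summands
$Y_{i,n} \reqdef \overline{K}_{\mathrm{IGau}}(X_i \nvert x, b_n^{-1} x)$, $i=1,\dots,n$,
so that $\hat{F}_{n,b}^{\mathrm{IGau}}(x) = n^{-1}\sum_{i=1}^{n} Y_{i,n}$ and
$n^{1/2}(\hat{F}_{n,b}^{\mathrm{IGau}}(x) - \EE[\hat{F}_{n,b}^{\mathrm{IGau}}(x)])
= n^{-1/2}\sum_{i=1}^{n}(Y_{i,n} - \EE Y_{1,n})$.
The summands change with $n$ only through $b = b_n \to 0$, and since $\overline{K}_{\mathrm{IGau}}$ is a survival function, each $Y_{i,n}$ lies in $[0,1]$, which will make all moment conditions trivial to verify.

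The first key step is to identify the limiting variance. By Lemma~\ref{lem:bias.variance.IGau.kernel} and the i.i.d.\ structure,
$\VV(Y_{1,n}) = n \cdot \VV(\hat{F}_{n,b}^{\mathrm{IGau}}(x)) = F(x)(1 - F(x)) - b^{1/2} \cdot \tfrac{f(x)}{2}\bigl[\lim_{b\to 0} b^{-1/2}\EE|T_1 - T_2|\bigr] + \OO_x(b)$,
so $\VV(Y_{1,n}) \to \sigma^2(x) = F(x)(1-F(x))$ as $b\to 0$, and the hypothesis $0 < F(x) < 1$ ensures $\sigma^2(x) > 0$. The second key step is to verify Lyapunov's condition with $\delta = 1$: since $|Y_{i,n} - \EE Y_{1,n}| \le 1$, we have
$\frac{1}{(n\VV(Y_{1,n}))^{3/2}} \sum_{i=1}^{n} \EE|Y_{i,n} - \EE Y_{1,n}|^{3}
\le \frac{n}{n^{3/2}\VV(Y_{1,n})^{3/2}} = \OO_x(n^{-1/2}) \to 0$,
and the Lyapunov CLT yields $(n\VV(Y_{1,n}))^{-1/2} \sum_{i=1}^{n}(Y_{i,n} - \EE Y_{1,n}) \stackrel{\mathscr{D}}{\longrightarrow} \mathcal{N}(0,1)$. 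Combining this with $\VV(Y_{1,n})\to\sigma^2(x)$ via Slutsky's theorem gives the first displayed convergence.

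For the two ``In particular'' statements, I write $n^{1/2}(\hat{F}_{n,b}^{\mathrm{IGau}}(x) - F(x)) = n^{1/2}(\hat{F}_{n,b}^{\mathrm{IGau}}(x) - \EE[\hat{F}_{n,b}^{\mathrm{IGau}}(x)]) + n^{1/2}\,\BB[\hat{F}_{n,b}^{\mathrm{IGau}}(x)]$, and Lemma~\ref{lem:bias.variance.IGau.kernel} gives
$n^{1/2}\,\BB[\hat{F}_{n,b}^{\mathrm{IGau}}(x)] = (n^{1/2} b) \cdot \tfrac{x^{2}}{2} f'(x) + \oo_x(n^{1/2} b)$.
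If $n^{1/2} b \to 0$ this vanishes; if $n^{1/2} b \to \lambda$ it converges to $\lambda \cdot \tfrac{x^2}{2} f'(x)$. Slutsky's theorem then delivers the two claimed limits.

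I do not foresee a serious obstacle: the whole argument is a textbook triangular-array Lyapunov CLT plus Slutsky, and the boundedness of $\overline{K}_{\mathrm{IGau}}$ in $[0,1]$ sidesteps any truncation or moment-control issue. The only substantive input is the already-established variance asymptotics of Lemma~\ref{lem:bias.variance.IGau.kernel}, which in particular certifies that the leading variance constant $\sigma^2(x)$ is reached even though the precise subleading constant $\lim_{b\to 0} b^{-1/2}\EE|T_1 - T_2|$ must be handled numerically for the MSE/MISE statements; it plays no role in the CLT itself.
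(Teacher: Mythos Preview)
Your proposal is correct and follows essentially the same route as the paper: a triangular-array CLT for the bounded, i.i.d.\ summands $\overline{K}_{\mathrm{IGau}}(X_i \nvert x, b^{-1}x)$, with the variance limit supplied by Lemma~\ref{lem:bias.variance.IGau.kernel}. The only cosmetic difference is that the paper verifies the Lindeberg condition directly (using $|Z_{1,b}|\le 2$ and $s_b^2\to\sigma^2(x)>0$), whereas you check Lyapunov with $\delta=1$; your explicit Slutsky step for the two ``in particular'' claims is also slightly more detailed than the paper, which leaves that deduction implicit.
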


\section{Asymptotic properties of the c.d.f.\ estimator with RIG kernel}\label{sec:RIG.kernel.results}

In this section, we find the asymptotic properties of the reciprocal inverse Gaussian (RIG) kernel estimator defined in \eqref{def:RIG.kernel.estimator}.

\begin{lemma}[{\color{blue} Bias and variance}]\label{lem:bias.variance.RIG.kernel}
    For any given $x\in (0,\infty)$,
    \begin{align}
        \BB[\hat{F}_{n,b}^{\mathrm{RIG}}(x)]
        &\leqdef \EE[\hat{F}_{n,b}^{\mathrm{IGau}}(x)] - F(x) = b \cdot \frac{x^2}{2} f'(x) + \oo_x(b), \\[1mm]
        \VV(\hat{F}_{n,b}^{\mathrm{RIG}}(x))
        &= n^{-1} F(x) (1 - F(x)) \notag \\
        &\quad- n^{-1} b^{1/2} \cdot \frac{f(x)}{2} \Big[\lim_{b\to 0} b^{-1/2} \, \EE[|T_1 - T_2|]\Big] \notag \\
        &\quad+ \OO_x(n^{-1} b),
    \end{align}
    where $T_1,T_2 \stackrel{\mathrm{i.i.d.}}{\sim} \mathrm{RIG}(x, b^{-1} x)$.
\end{lemma}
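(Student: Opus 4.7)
The argument will follow the template of Lemma~\ref{lem:bias.variance.IGau.kernel}, the only change being the parametric family serving as the kernel. Let $T \sim \mathrm{RIG}(x^{-1}(1-b)^{-1}, x^{-1} b^{-1})$, independent of the sample. A direct computation using the standard moments of the reciprocal inverse Gaussian distribution gives $\EE[T] = 1/\mu + 1/\lambda = x(1-b) + xb = x$ and $\VV(T) = 1/(\mu\lambda) + 2/\lambda^2 = x^2 b(1+b)$, so the parametrization is chosen precisely so that $T$ concentrates at $x$ with spread of order $\sqrt{b}$. For the bias, I would use the identity
\begin{equation*}
\EE\!\left[\overline{K}_{\mathrm{RIG}}(X_1 \nvert x^{-1}(1-b)^{-1}, x^{-1} b^{-1})\right] = \PP(T \geq X_1) = \EE[F(T)],
\end{equation*}
valid because $T$ and $X_1$ are independent and continuous, to reduce the problem to a second-order Taylor expansion of $F$ around $x$. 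Since $\EE[T-x] = 0$, $\EE[(T-x)^2] = x^2 b + \OO(b^2)$, and $F'' = f'$ is continuous and bounded, this yields $\EE[F(T)] - F(x) = b\,\frac{x^2}{2} f'(x) + \oo_x(b)$, the remainder being controlled by dominated convergence applied to $b^{-1}(T-x)^2$, which admits a nondegenerate limit as $b \to 0$.

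For the variance, introduce two i.i.d.\ copies $T_1, T_2$ of $T$, both independent of $X_1$. Conditioning on $(T_1,T_2)$ yields $\EE[\overline{K}_{\mathrm{RIG}}(X_1)] = \EE[F(T_1)]$ and $\EE[\overline{K}_{\mathrm{RIG}}(X_1)^2] = \PP(X_1 \leq T_1 \wedge T_2) = \EE[F(T_1 \wedge T_2)]$. Combining the elementary pointwise identity $F(T_1 \wedge T_2) + F(T_1 \vee T_2) = F(T_1) + F(T_2)$ with $F(T_1 \vee T_2) - F(T_1 \wedge T_2) = \int_{T_1 \wedge T_2}^{T_1 \vee T_2} f(t)\,\rd t$ gives
\begin{equation*}
\VV(\overline{K}_{\mathrm{RIG}}(X_1)) = \EE[F(T_1)]\bigl(1 - \EE[F(T_1)]\bigr) - \tfrac{1}{2}\,\EE\!\int_{T_1 \wedge T_2}^{T_1 \vee T_2} f(t)\,\rd t.
\end{equation*}
The first summand equals $F(x)(1 - F(x)) + \OO_x(b)$ by the bias computation just performed. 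For the integral, I would replace $f(t)$ by $f(x)$ on the interval of length $|T_1 - T_2|$ and bound the error by $\|f'\|_\infty \cdot \EE[|T_1 - T_2|(|T_1 - x| + |T_2 - x|)] = \OO_x(b)$ via Cauchy--Schwarz together with the second-moment bound $\EE[(T - x)^2] = \OO(b)$ above. This reduces the integral to $f(x)\,\EE[|T_1 - T_2|] + \OO_x(b)$; dividing by $n$ (because the $X_i$ are i.i.d.) produces the announced expression for $\VV(\hat{F}^{\mathrm{RIG}}_{n,b}(x))$.

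The principal technical obstacle is to show that $\lim_{b \to 0} b^{-1/2}\,\EE[|T_1 - T_2|]$ exists and is finite for the RIG kernel family, i.e.\ that $b^{-1/2}|T_1 - T_2|$ is uniformly integrable with a well-defined weak limit; only under this assumption does the $b^{1/2}$ coefficient in the variance expansion admit the clean form stated. This is the exact analogue of the corresponding limit appearing in Lemma~\ref{lem:bias.variance.IGau.kernel} for the $\mathrm{IGau}$ kernel, and will require sharp asymptotics for the centred and rescaled RIG distribution; the requisite moment computations are expected to parallel those carried out for the inverse Gaussian case and to be deferred to Appendix~\ref{sec:tech.lemmas}.
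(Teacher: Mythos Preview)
Your proof is correct and very close to the paper's. The bias argument is identical: write $\EE[\overline{K}_{\mathrm{RIG}}(X_1)] = \EE[F(T)]$ by independence, then Taylor expand using $\EE[T]=x$ and $\VV(T)=x^2b(1+b)$.

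For the variance, the paper also identifies $\EE[\overline{K}_{\mathrm{RIG}}^2(X_1)]=\EE[F(S)]$ with $S=T_1\wedge T_2$, but then Taylor-expands $F$ directly at the random point $S$, invoking the representation $S=\tfrac{1}{2}(T_1+T_2)-\tfrac{1}{2}|T_1-T_2|$ to get $\EE[S-x]=-\tfrac{1}{2}\EE[|T_1-T_2|]$ and bounding the second-order term via $\EE[(S-x)^2]=\OO_x(b)$. Your route through the exact identity $2F(T_1\wedge T_2)=F(T_1)+F(T_2)-\int_{T_1\wedge T_2}^{T_1\vee T_2}f$ is an equivalent rearrangement that lands on the same $f(x)\,\EE[|T_1-T_2|]$ term; your Cauchy--Schwarz bound on $\EE[|T_1-T_2|(|T_1-x|+|T_2-x|)]$ plays exactly the role of the paper's $\EE[(S-x)^2]$ estimate. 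Regarding the existence of $\lim_{b\to 0}b^{-1/2}\EE[|T_1-T_2|]$, you are right to flag it, and the paper does not establish it analytically either: unlike the Gam, IGam and LN cases treated in Appendix~\ref{sec:tech.lemmas}, the RIG (and IGau) constant is left to numerical approximation (cf.\ Corollary~\ref{prop:MSE.RIG.kernel}).
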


\begin{corollary}[{\color{blue} Mean squared error}]\label{prop:MSE.RIG.kernel}
    For any given $x\in (0,\infty)$,
    \begin{align}
        \mathrm{MSE}(\hat{F}_{n,b}^{\mathrm{RIG}}(x))
        &= \VV(\hat{F}_{n,b}^{\mathrm{RIG}}(x)) + \big(\BB[\hat{F}_{n,b}^{\mathrm{RIG}}(x)]\big)^2 \notag \\[2mm]
        &= n^{-1} F(x) (1 - F(x)) \notag \\
        &\quad- n^{-1} b^{1/2} \cdot \frac{f(x)}{2} \Big[\lim_{b\to 0} b^{-1/2} \, \EE[|T_1 - T_2|]\Big] \notag \\[-1mm]
        &\quad+ b^2 \cdot \frac{1}{4} x^4 (f'(x))^2 + \OO_x(n^{-1} b) + \oo_x(b^2),
    \end{align}
    where $T_1,T_2 \stackrel{\mathrm{i.i.d.}}{\sim} \mathrm{RIG}(x^{-1} (1 - b)^{-1}, x^{-1} b^{-1})$.
    The quantity $\lim_{b\to 0} b^{-1/2} \, \EE[|T_1 - T_2|]$ needs to be approximated numerically.
    In particular, if $f(x) \big[\lim_{b\to 0} b^{-1/2} \, \EE[|T_1 - T_2|]\big] \cdot f'(x) \neq 0$, the asymptotically optimal choice of $b$, with respect to $\mathrm{MSE}$, is
    \begin{equation}
        b_{\mathrm{opt}} = n^{-2/3} \left[\frac{4 \cdot \frac{1}{4} x^4 (f'(x))^2}{\frac{f(x)}{2} \lim_{b\to 0} b^{-1/2} \, \EE[|T_1 - T_2|]}\right]^{-2/3}
    \end{equation}
    with
    \begin{equation}
        \begin{aligned}
            \mathrm{MSE}(\hat{F}_{n,b_{\mathrm{opt}}}^{\mathrm{RIG}}(x))
            &= n^{-1} F(x) (1 - F(x)) \\[-1.5mm]
            &\quad- n^{-4/3} \, \frac{3}{4} \left[\frac{\big(\frac{f(x)}{2} \lim_{b\to 0} b^{-1/2} \, \EE[|T_1 - T_2|]\big)^4}{4 \cdot \frac{1}{4} x^4 (f'(x))^2}\right]^{1/3} \\
            &\quad+ \oo_x(n^{-4/3}).
        \end{aligned}
    \end{equation}
\end{corollary}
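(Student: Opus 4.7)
The plan is to apply the standard decomposition $\mathrm{MSE} = \VV + \BB^2$ and then perform a one-variable optimization in $b$. First I would square the bias from Lemma~\ref{lem:bias.variance.RIG.kernel}: since the bias equals $b \cdot \frac{x^2}{2} f'(x) + \oo_x(b)$, squaring gives $b^2 \cdot \frac{x^4}{4}(f'(x))^2 + \oo_x(b^2)$, using the identity $(a+\oo_x(b))^2 = a^2 + \oo_x(b^2)$ whenever $a = \OO_x(b)$. Adding this to the variance expansion from the lemma yields the claimed MSE identity, with the $\OO_x(n^{-1} b)$ and $\oo_x(b^2)$ remainders absorbing all lower-order contributions.

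Second, to identify $b_{\mathrm{opt}}$, I would retain only the two $b$-dependent leading terms, namely $g(b) \leqdef -n^{-1} b^{1/2} A(x) + b^2 B(x)$, where $A(x) \leqdef \frac{f(x)}{2}\lim_{b\to 0} b^{-1/2}\EE[|T_1-T_2|]$ and $B(x) \leqdef \frac{x^4}{4}(f'(x))^2$. Setting $g'(b)=0$ gives $4 B(x)\, b^{3/2} = A(x)\, n^{-1}$, hence $b_{\mathrm{opt}} = n^{-2/3}[4 B(x)/A(x)]^{-2/3}$, which matches the stated formula. A direct substitution then yields $b_{\mathrm{opt}}^2 B(x) = \tfrac{1}{4} n^{-1} b_{\mathrm{opt}}^{1/2} A(x)$, so $g(b_{\mathrm{opt}}) = -\tfrac{3}{4} n^{-1} b_{\mathrm{opt}}^{1/2} A(x) = -\tfrac{3}{4} n^{-4/3} [A(x)^4/(4 B(x))]^{1/3}$, which is exactly the leading correction displayed in the corollary.

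Finally, for the error bound, since $b_{\mathrm{opt}} \asymp n^{-2/3}$, the $\OO_x(n^{-1} b)$ remainder is $\OO_x(n^{-5/3})$ and the $\oo_x(b^2)$ remainder is $\oo_x(n^{-4/3})$, so both collapse into the stated $\oo_x(n^{-4/3})$ residual. There is no genuinely hard step here: the result is a mechanical corollary of Lemma~\ref{lem:bias.variance.RIG.kernel} combined with one-variable calculus. The only point requiring care is the order-of-magnitude bookkeeping at $b = b_{\mathrm{opt}}$, checking that every remainder indeed falls below the $n^{-4/3}$ threshold appearing in the final assertion; all other manipulations are routine algebra.
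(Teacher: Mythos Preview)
Your proposal is correct and matches the paper's approach: the paper treats this result as an immediate corollary of Lemma~\ref{lem:bias.variance.RIG.kernel} (no separate proof is written out), and your mechanical combination of $\VV + \BB^2$ followed by the one-variable optimization in $b$ is exactly the intended argument. The order-of-magnitude bookkeeping you spell out for the remainders at $b_{\mathrm{opt}} \asymp n^{-2/3}$ is the only point the paper leaves implicit, and you handle it correctly.
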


\begin{proposition}[{\color{blue} Mean integrated squared error}]\label{prop:MISE.RIG.kernel}
    Assuming that the target density $f = F'$ satisfies
    \begin{equation}
        \int_0^{\infty} f(x) \Big[\lim_{b\to 0} b^{-1/2} \, \EE[|T_1 - T_2|]\Big] \rd x < \infty \quad \text{and} \quad \int_0^{\infty} x^4 (f'(x))^2 \rd x < \infty,
    \end{equation}
    where $T_1,T_2 \stackrel{\mathrm{i.i.d.}}{\sim} \mathrm{RIG}(x^{-1} (1 - b)^{-1}, x^{-1} b^{-1})$, then we have
    \begin{align}
        \mathrm{MISE}(\hat{F}_{n,b}^{\mathrm{RIG}}(x))
        &= \int_0^{\infty} \VV(\hat{F}_{n,b}^{\mathrm{RIG}}(x)) \rd x + \int_0^{\infty} \big(\BB[\hat{F}_{n,b}^{\mathrm{RIG}}(x)]\big)^2 \rd x \notag \\
        &= n^{-1} \int_0^{\infty} F(x) (1 - F(x)) \rd x \notag \\[-1mm]
        &\quad- n^{-1} b^{1/2} \int_0^{\infty} \frac{f(x)}{2} \Big[\lim_{b\to 0} b^{-1/2} \, \EE[|T_1 - T_2|]\Big] \rd x \notag \\
        &\quad+ b^2 \int_0^{\infty} \frac{x^4}{4} (f'(x))^2 \rd x + \oo(n^{-1} b^{1/2}) + \oo(b^2).
    \end{align}
    The quantity $\lim_{b\to 0} b^{-1/2} \, \EE[|T_1 - T_2|]$ needs to be approximated numerically.
    In particular, if $\int_0^{\infty} x^4(f'(x))^2 \rd x > 0$, the asymptotically optimal choice of $b$, with respect to $\mathrm{MISE}$, is
    \begin{equation}\label{eq:b.opt.MISE.RIG}
        b_{\mathrm{opt}} = n^{-2/3} \left[\frac{4 \int_0^{\infty} \frac{1}{4} x^4 (f'(x))^2 \rd x}{\int_0^{\infty} \frac{f(x)}{2} \lim_{b\to 0} b^{-1/2} \, \EE[|T_1 - T_2|] \rd x}\right]^{-2/3}
    \end{equation}
    with
    \begin{equation}
        \begin{aligned}
            \mathrm{MISE}(\hat{F}_{n,b_{\mathrm{opt}}}^{\mathrm{RIG}}(x))
            &= n^{-1} \int_0^{\infty} F(x) (1 - F(x)) \rd x \\[-1.5mm]
            &\quad- n^{-4/3} \, \frac{3}{4} \left[\frac{\big(\int_0^{\infty} \frac{f(x)}{2} \lim_{b\to 0} b^{-1/2} \, \EE[|T_1 - T_2|] \rd x\big)^4}{4 \int_0^{\infty} \frac{1}{4} x^4 (f'(x))^2 \rd x}\right]^{1/3} \\
            &\quad+ \oo(n^{-4/3}).
        \end{aligned}
    \end{equation}
\end{proposition}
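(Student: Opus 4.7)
The plan is to derive the $\mathrm{MISE}$ expansion by integrating the pointwise variance and squared bias from Lemma~\ref{lem:bias.variance.RIG.kernel} over $(0,\infty)$, and then to minimize the resulting expression in $b$ by elementary calculus. The starting point is the standard decomposition $\mathrm{MISE}(\hat{F}_{n,b}^{\mathrm{RIG}}) = \int_0^\infty \VV(\hat{F}_{n,b}^{\mathrm{RIG}}(x))\,\rd x + \int_0^\infty (\BB[\hat{F}_{n,b}^{\mathrm{RIG}}(x)])^2\,\rd x$, which holds whenever both integrals are finite—a condition the two hypotheses of the proposition are designed to guarantee at the level of leading terms. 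Substituting the pointwise variance $n^{-1}F(x)(1-F(x)) - n^{-1}b^{1/2}\cdot\tfrac{f(x)}{2}[\lim_{b\to 0}b^{-1/2}\EE|T_1-T_2|] + \OO_x(n^{-1}b)$ and the squared pointwise bias $b^2\cdot\tfrac{x^4}{4}(f'(x))^2 + \oo_x(b^2)$ from Lemma~\ref{lem:bias.variance.RIG.kernel} then produces the claimed leading-order formula, modulo the remainder terms.

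The main obstacle will be upgrading the pointwise remainders $\OO_x(n^{-1}b)$ and $\oo_x(b^2)$ into global $\oo(n^{-1}b^{1/2})$ and $\oo(b^2)$ after integration. I would mirror the strategy used for the $\mathrm{IGau}$ estimator in Proposition~\ref{prop:MISE.IGau.kernel}: the expansions in Lemma~\ref{lem:bias.variance.RIG.kernel} come from a second-order Taylor expansion of $F$ around $x$ combined with moment estimates for the $\mathrm{RIG}$ kernel gathered in Appendix~\ref{sec:tech.lemmas}. By tracking the $x$-dependence in those moment estimates, the pointwise remainders should admit majorants of the form $b\cdot g_1(x)$ and $n^{-1}b\cdot g_2(x)$ with $g_1, g_2$ integrable, at which point dominated convergence delivers the global conclusions. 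The fact that the $\mathrm{RIG}$ parameters $\mu = x^{-1}(1-b)^{-1}$ and $\lambda = x^{-1}b^{-1}$ involve $x^{-1}$ rather than $x$ makes the moment bookkeeping slightly different from the $\mathrm{IGau}$ case, but structurally no more difficult.

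Once the expansion $\mathrm{MISE} = n^{-1}A - n^{-1}b^{1/2}C_1 + b^2 C_2 + \oo(n^{-1}b^{1/2}) + \oo(b^2)$ is in place, with $A \leqdef \int_0^\infty F(x)(1-F(x))\,\rd x$, $C_1 \leqdef \int_0^\infty \tfrac{f(x)}{2}[\lim_{b\to 0}b^{-1/2}\EE|T_1-T_2|]\,\rd x$ and $C_2 \leqdef \int_0^\infty \tfrac{x^4}{4}(f'(x))^2\,\rd x$, the optimization is routine: the first-order condition $-\tfrac{1}{2}n^{-1}b^{-1/2}C_1 + 2bC_2 = 0$ gives $b^{3/2} = C_1/(4nC_2)$, hence $b_{\mathrm{opt}} = n^{-2/3}(4C_2/C_1)^{-2/3}$, matching \eqref{eq:b.opt.MISE.RIG}. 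Plugging $b_{\mathrm{opt}}$ back into the three leading terms produces $n^{-1}A - \tfrac{3}{4}n^{-4/3}[C_1^4/(4C_2)]^{1/3} + \oo(n^{-4/3})$, which is the stated optimal $\mathrm{MISE}$.
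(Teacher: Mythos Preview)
Your proposal is correct and matches the paper's intended approach: the MISE expansion is obtained by integrating the pointwise bias and variance expressions from Lemma~\ref{lem:bias.variance.RIG.kernel} over $(0,\infty)$, and the optimal bandwidth follows from elementary calculus on the leading terms, exactly as you describe. One remark: the paper does not actually spell out a separate proof for this proposition---the proof environment in Appendix~\ref{sec:proof.results.RIG.kernel} carrying the label ``Proof of Proposition~\ref{prop:MISE.RIG.kernel}'' in fact contains the Lindeberg argument for the asymptotic normality result (Proposition~\ref{prop:asymptotic.normality.RIG.kernel}), an apparent labeling slip repeated across all five kernels; the MISE statement is evidently regarded as immediate from the lemma, which is precisely your plan.
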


\begin{proposition}[{\color{blue} Asymptotic normality}]\label{prop:asymptotic.normality.RIG.kernel}
    For any $x > 0$ such that $0 < F(x) < 1$, we have the following convergence in distribution:
    \begin{equation}
        n^{1/2} (\hat{F}_{n,b}^{\mathrm{RIG}}(x) - \EE[\hat{F}_{n,b}^{\mathrm{IGau}}(x)]) \stackrel{\mathscr{D}}{\longrightarrow} \mathcal{N}(0, \sigma^2(x)), \quad \text{as } b\to 0, \, n\to \infty,
    \end{equation}
    where $\sigma^2(x) \leqdef F(x) (1 - F(x))$.
    In particular, Lemma~\ref{lem:bias.variance.RIG.kernel} implies
    \begin{alignat}{3}
        &n^{1/2} (\hat{F}_{n,b}^{\mathrm{RIG}}(x) - F(x)) \stackrel{\mathscr{D}}{\longrightarrow} \mathcal{N}(0,\sigma^2(x)), \quad &&\text{if } n^{1/2} b \to 0, \\
        &n^{1/2} (\hat{F}_{n,b}^{\mathrm{RIG}}(x) - F(x)) \stackrel{\mathscr{D}}{\longrightarrow} \mathcal{N}(\lambda \cdot \frac{x^2}{2} f'(x), \sigma^2(x)), \quad &&\text{if } n^{1/2} b \to \lambda,
    \end{alignat}
    for any constant $\lambda > 0$.
\end{proposition}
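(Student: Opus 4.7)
The plan is to recognize that $\hat{F}_{n,b}^{\mathrm{RIG}}(x)$ is simply a sample mean of i.i.d.\ bounded random variables and apply the Lindeberg--Feller central limit theorem for triangular arrays. Specifically, for fixed $x > 0$, set $Y_{i,n} \leqdef \overline{K}_{\mathrm{RIG}}(X_i \nvert x^{-1}(1-b)^{-1}, x^{-1}b^{-1})$ and $Z_{i,n} \leqdef Y_{i,n} - \EE[Y_{i,n}]$. Since $\overline{K}_{\mathrm{RIG}}(\cdot \nvert \mu,\lambda)$ is a survival function, each $Y_{i,n}$ takes values in $[0,1]$, hence $|Z_{i,n}| \le 1$ almost surely. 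I would then write
\begin{equation*}
n^{1/2}\bigl(\hat{F}_{n,b}^{\mathrm{RIG}}(x) - \EE[\hat{F}_{n,b}^{\mathrm{RIG}}(x)]\bigr) \;=\; \frac{1}{\sqrt{n}} \sum_{i=1}^n Z_{i,n},
\end{equation*}
and observe that, by the variance expansion in Lemma~\ref{lem:bias.variance.RIG.kernel} together with $b\to 0$, one has $\VV(Z_{1,n}) = n\,\VV(\hat{F}_{n,b}^{\mathrm{RIG}}(x)) \to F(x)(1-F(x)) = \sigma^2(x) > 0$.

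Next, I would verify the Lindeberg condition for the triangular array $\{Z_{i,n}/\sqrt{n} : 1\le i \le n\}$. For any fixed $\e > 0$, choose $n$ so large that $1/\sqrt{n} < \e$; then $|Z_{i,n}/\sqrt{n}| \le 1/\sqrt{n} < \e$ almost surely, so
\begin{equation*}
\sum_{i=1}^n \EE\!\left[(Z_{i,n}/\sqrt{n})^2 \, \ind_{\{|Z_{i,n}/\sqrt{n}| > \e\}}\right] = 0
\end{equation*}
for all sufficiently large $n$. Combined with the convergence of the normalizing variance to $\sigma^2(x)$, the Lindeberg--Feller theorem yields $n^{-1/2}\sum_{i=1}^n Z_{i,n} \stackrel{\mathscr{D}}{\longrightarrow} \mathcal{N}(0, \sigma^2(x))$, which is the centered statement of the proposition (up to the obvious typo where $\EE[\hat{F}_{n,b}^{\mathrm{IGau}}(x)]$ should read $\EE[\hat{F}_{n,b}^{\mathrm{RIG}}(x)]$).

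For the two corollary statements, I would apply Slutsky's lemma to the decomposition
\begin{equation*}
n^{1/2}\bigl(\hat{F}_{n,b}^{\mathrm{RIG}}(x) - F(x)\bigr) = n^{1/2}\bigl(\hat{F}_{n,b}^{\mathrm{RIG}}(x) - \EE[\hat{F}_{n,b}^{\mathrm{RIG}}(x)]\bigr) + n^{1/2}\bigl(\EE[\hat{F}_{n,b}^{\mathrm{RIG}}(x)] - F(x)\bigr).
\end{equation*}
The bias expansion in Lemma~\ref{lem:bias.variance.RIG.kernel} gives $n^{1/2}(\EE[\hat{F}_{n,b}^{\mathrm{RIG}}(x)] - F(x)) = (n^{1/2} b) \cdot \tfrac{x^2}{2} f'(x) + \oo_x(n^{1/2} b)$, which tends to $0$ when $n^{1/2} b \to 0$ and to $\lambda \cdot \tfrac{x^2}{2} f'(x)$ when $n^{1/2} b \to \lambda$, giving the two displayed limits.

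There is no real obstacle here: the boundedness of survival functions trivializes the Lindeberg condition, and the only non-trivial input, namely $\VV(Y_{1,n}) \to \sigma^2(x)$, is already supplied by Lemma~\ref{lem:bias.variance.RIG.kernel}. The argument is essentially identical to the one used for the Gam, IGam, LN, and IGau kernel estimators in Propositions~\ref{prop:asymptotic.normality.G.kernel}--\ref{prop:asymptotic.normality.IGau.kernel}, and could even be stated once as a general lemma for bounded asymmetric kernel c.d.f.\ estimators.
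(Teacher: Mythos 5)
Your proof is correct and follows essentially the same route as the paper: the paper also writes $\hat{F}_{n,b}^{\mathrm{RIG}}(x) - \EE[\hat{F}_{n,b}^{\mathrm{RIG}}(x)]$ as a normalized sum of i.i.d.\ centered summands bounded by a constant, verifies the Lindeberg condition for double arrays trivially from that boundedness together with the variance convergence $n\,\VV(\hat{F}_{n,b}^{\mathrm{RIG}}(x)) \to F(x)(1-F(x))$ supplied by Lemma~\ref{lem:bias.variance.RIG.kernel}, and obtains the non-centered limits from the bias expansion. You also correctly identify the typo $\EE[\hat{F}_{n,b}^{\mathrm{IGau}}(x)]$ for $\EE[\hat{F}_{n,b}^{\mathrm{RIG}}(x)]$ in the statement.
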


\section{Numerical study}\label{sec:numerical.study}

As in \cite{Mombeni_et_al_2019_accepted}, we generated $M = 1000$ samples of size $n = 256$ and $n = 1000$ from eight distributions:
\begin{enumerate}
    \item $\text{Burr}\hspace{0.2mm}(1,3,1)$, with the following parametrization for the density function:
        \begin{equation}
            \begin{array}{l}
                f_1(x \nvert \lambda, c, k) \leqdef \frac{c k}{\lambda} \big(\frac{x}{\lambda}\big)^{c-1} \big[1 + \big(\frac{x}{\lambda}\big)^c\big]^{-k - 1} \ind_{(0,\infty)}(x), \quad \lambda, c, k > 0;
            \end{array}
        \end{equation}
    \item $\text{Gamma}\hspace{0.2mm}(0.6,2)$, with the following parametrization for the density function:
        \begin{equation}
            \begin{array}{l}
                f_2(x \nvert \alpha, \theta) \leqdef \frac{x^{\alpha - 1} \exp(-\frac{x}{\theta})}{\theta^{\alpha} \Gamma(\alpha)} \ind_{(0,\infty)}(x), \quad \alpha,\theta > 0;
            \end{array}
        \end{equation}
    \item $\text{Gamma}\hspace{0.2mm}(4,2)$, with the following parametrization for the density function:
        \begin{equation}
            \begin{array}{l}
                f_3(x \nvert \alpha, \theta) \leqdef \frac{x^{\alpha - 1} \exp(-\frac{x}{\theta})}{\theta^{\alpha} \Gamma(\alpha)} \ind_{(0,\infty)}(x), \quad \alpha,\theta > 0;
            \end{array}
        \end{equation}
    \item $\text{GeneralizedPareto}\hspace{0.2mm}(0.4, 1, 0)$, with the following parametrization for the density function:
        \begin{equation}
            \begin{array}{l}
                f_4(x \nvert \xi, \sigma, \mu) \leqdef  \frac{1}{\sigma} \big[1 + \xi \big(\frac{x - \mu}{\sigma}\big)\big]^{-\frac{1}{\xi} - 1} \ind_{(\mu,\infty)}(x), \quad \xi, \sigma > 0, ~\mu\in \R;
            \end{array}
        \end{equation}
    \item $\text{HalfNormal}\hspace{0.2mm}(1)$, with the following parametrization for the density function:
        \begin{equation}
            \begin{array}{l}
                f_5(x \nvert \sigma) \leqdef \sqrt{\frac{2}{\pi \sigma^2}} \exp\big(-\frac{x^2}{2 \sigma^2}\big) \ind_{(0,\infty)}(x), \quad \sigma > 0;
            \end{array}
        \end{equation}
    \item $\text{LogNormal}\hspace{0.2mm}(0, 0.75)$, with the following parametrization for the density function:
        \begin{equation}
            \begin{array}{l}
                f_6(x \nvert \mu, \sigma) \leqdef \frac{1}{x \sqrt{2 \pi \sigma^2}} \exp\big(-\frac{(\log x - \mu)^2}{2 \sigma^2}\big) \ind_{(0,\infty)}(x), \quad \mu\in \R, ~\sigma > 0;
            \end{array}
        \end{equation}
    \item $\text{Weibull}\hspace{0.2mm}(1.5, 1.5)$, with the following parametrization for the density function:
        \begin{equation}
            \begin{array}{l}
                f_7(x \nvert \lambda, k) \leqdef \frac{k}{\lambda} \, \big(\frac{x}{\lambda}\big)^{k-1} \exp\big(-\big(\frac{x}{\lambda}\big)^k\big) \ind_{(0,\infty)}(x), \quad \lambda, k > 0;
            \end{array}
        \end{equation}
    \item $\text{Weibull}\hspace{0.2mm}(3, 2)$, with the following parametrization for the density function:
        \begin{equation}
            \begin{array}{l}
                f_8(x \nvert \lambda, k) \leqdef \frac{k}{\lambda} \, \big(\frac{x}{\lambda}\big)^{k-1} \exp\big(-\big(\frac{x}{\lambda}\big)^k\big) \ind_{(0,\infty)}(x), \quad \lambda, k > 0.
            \end{array}
        \end{equation}
\end{enumerate}
For each of the eight distributions ($i = 1, 2, \dots, 8$), each of the ten estimators ($j = 1, 2, \dots, 10$), each sample size ($n = 256, 1000$), and each sample ($k = 1, 2, \dots, M$), we calculated the integrated squared errors
\begin{equation}
    \mathrm{ISE}_{i,j,n}^{(k)} \leqdef \int_0^{\infty} (\hat{F}_{j,n}^{(k)}(x) - F_i(x))^2 \rd x,
\end{equation}
where
\begin{enumerate}
    \item $\hat{F}_{1,n}^{(k)}$ denotes the estimator $\hat{F}_{n,b_{\mathrm{opt}}}^{\mathrm{Gam}}$ from \eqref{def:G.kernel.estimator} applied to the $k$-th sample;
    \item $\hat{F}_{2,n}^{(k)}$ denotes the estimator $\hat{F}_{n,b_{\mathrm{opt}}}^{\mathrm{IGam}}$ from \eqref{def:IGam.kernel.estimator} applied to the $k$-th sample;
    \item $\hat{F}_{3,n}^{(k)}$ denotes the estimator $\hat{F}_{n,b_{\mathrm{opt}}}^{\mathrm{LN}}$ from \eqref{def:LN.kernel.estimator} applied to the $k$-th sample;
    \item $\hat{F}_{4,n}^{(k)}$ denotes the estimator $\hat{F}_{n,b_{\mathrm{opt}}}^{\mathrm{IGau}}$ from \eqref{def:IGau.kernel.estimator} applied to the $k$-th sample;
    \item $\hat{F}_{5,n}^{(k)}$ denotes the estimator $\hat{F}_{n,b_{\mathrm{opt}}}^{\mathrm{RIG}}$ from \eqref{def:RIG.kernel.estimator} applied to the $k$-th sample;
    \item $\hat{F}_{6,n}^{(k)}$ denotes the estimator $\hat{F}_{n,b_{\mathrm{opt}}}^{\mathrm{B-S}}$ from \eqref{def:BS.kernel.estimator} applied to the $k$-th sample;
    \item $\hat{F}_{7,n}^{(k)}$ denotes the estimator $\hat{F}_{n,b_{\mathrm{opt}}}^{\mathrm{W}}$ from \eqref{def:W.kernel.estimator} applied to the $k$-th sample;
\end{enumerate}
(for $\hat{F}_{1,n}^{(\scriptscriptstyle k)}$ to $\hat{F}_{7,n}^{(\scriptscriptstyle k)}$, $b_{\mathrm{opt}}$ is optimal with respect to the MISE (see \eqref{eq:b.opt.MISE.G}, \eqref{eq:b.opt.MISE.IGam}, \eqref{eq:b.opt.MISE.LN}, \eqref{eq:b.opt.MISE.IGau} and \eqref{eq:b.opt.MISE.RIG}) and approximated under the assumption that the target distribution is $\text{Gamma}\hspace{0.2mm}(\hat{\alpha}_n^{(\scriptscriptstyle k)},\hat{\theta}_n^{(\scriptscriptstyle k)})$, where $\hat{\alpha}_n^{(\scriptscriptstyle k)}$ and $\hat{\theta}_n^{(\scriptscriptstyle k)}$ are the maximum likelihood estimates for the $k$-th sample)
and
\begin{enumerate}\setcounter{enumi}{7}
    \item $\hat{F}_{8,n}^{(k)}(x) \leqdef \frac{1}{n} \sum_{\ell=1}^n \mathrm{Epa}\big(\frac{x - X_{\ell}^{(\scriptscriptstyle k)}}{b_{\mathrm{LNO}}}\big)$, where
        \begin{itemize}
            \item $\mathrm{Epa}(u) \leqdef \big(\frac{1}{2} + \frac{3u}{4} - \frac{u^3}{4}\big) \cdot \ind_{(-1,1)}(u) + \ind_{[1,\infty)}(u)$ denotes the c.d.f.\ of the Epanechnikov kernel,
            \item $b_{\mathrm{LNO}}$ is selected by minimizing the Leave-None-Out criterion from page 197 in \cite{MR1354087};
        \end{itemize}
    \item $\hat{F}_{9,n}^{(k)}(x) \leqdef \frac{1}{n} \sum_{\ell=1}^n \Big\{\mathrm{Epa}\big(\frac{x - X_{\ell}^{(\scriptscriptstyle k)}}{b_{\mathrm{CV}}}\big) \ind_{[b_{\mathrm{CV}},\infty)}(x) + \mathrm{Epa}\big(\frac{x - X_{\ell}^{(\scriptscriptstyle k)}}{x}\big) \ind_{(0,b_{\mathrm{CV}})}(x)\Big\}$ is the boundary modified kernel estimator from Example 2.3 in \cite{MR3072469}, where
        \begin{itemize}
            \item $\mathrm{Epa}(u) \leqdef \big(\frac{1}{2} + \frac{3u}{4} - \frac{u^3}{4}\big) \cdot \ind_{(-1,1)}(u) + \ind_{[1,\infty)}(u)$ denotes the c.d.f.\ of the Epanechnikov kernel,
            \item $b_{\mathrm{CV}}$ is selected by minimizing the Cross-Validation criterion from page 180 in \cite{MR3072469};
        \end{itemize}
    \item $\hat{F}_{10,n}^{(k)}(x) \leqdef \frac{1}{n} \sum_{\ell=1}^n \ind_{\{X_{\ell}^{(\scriptscriptstyle k)} \leq x\}}$ is the empirical c.d.f.\ applied to the $k$-th sample;
\end{enumerate}
Everywhere in our \texttt{R} code, we approximated the integrals on $(0,\infty)$ using the \texttt{integral} function from the \texttt{R} package \texttt{pracma} (the base function \texttt{integrate} had serious precision issues).
Table~\ref{table:1} below shows the mean and standard deviation of the $\mathrm{ISE}$'s, i.e.,
\begin{equation}
    \frac{1}{M} \sum_{k=1}^M \mathrm{ISE}_{i,j,n}^{(k)} \quad \text{and} \quad \sqrt{\frac{1}{M-1} \sum_{k=1}^M \Big(\mathrm{ISE}_{i,j,n}^{(k)} - \frac{1}{M} \sum_{k'=1}^M \mathrm{ISE}_{i,j,n}^{(k')}\Big)^2},
\end{equation}
for the eight distributions ($i = 1, 2, \dots, 8$), the ten estimators ($j = 1, 2, \dots, 10$) and the two sample sizes ($n = 256, 1000$).
All the values presented in the table have been multiplied by $10^4$.
In Table~\ref{table:2}, we computed, for each distribution and each sample size, the difference between the $\mathrm{ISE}$ means and the lowest $\mathrm{ISE}$ mean for the corresponding distribution and sample size (i.e., the $\mathrm{ISE}$ means minus the $\mathrm{ISE}$ mean of the best estimator on the corresponding line).
The totals of those differences are also calculated for each sample size on the two ``total'' lines.
Figure~\ref{fig:ISE.boxplots} gives a better idea of the distribution of $\mathrm{ISE}$'s by displaying the boxplot of the $\mathrm{ISE}$'s for every distribution and every estimator, when the sample size is $n = 1000$. Finally, the Figures~\ref{fig:cdf.estimates.burr},~\ref{fig:cdf.estimates.gamma.1},~\ref{fig:cdf.estimates.gamma.2},~\ref{fig:cdf.estimates.gen.pareto},~\ref{fig:cdf.estimates.halfnormal},~\ref{fig:cdf.estimates.lognormal},~\ref{fig:cdf.estimates.weibull.1},~\ref{fig:cdf.estimates.weibull.2} (one figure for each of the eight distributions) show a collection of ten c.d.f.\ estimates from each of the ten estimators.

\vspace{2mm}
Here are the results, which we discuss in Section~\ref{sec:discussion}:

    {
    \newgeometry{bottom=3.5cm}
    \pagestyle{empty}
    \begin{landscape}
        \def\arraystretch{1.45}
        \setlength\tabcolsep{1mm}
        \begin{table}
            \captionsetup{width=0.9\linewidth}
            \centering
            \begin{tabular}{|cc|cc|cc|cc|cc|cc|cc|cc|cc|cc|cc|}
                \hline
                \multicolumn{ 1}{|c}{$\times 10^{-4}$} & \multicolumn{ 1}{|c}{$i$} & \multicolumn{ 2}{|c}{Gam\hspace{0.3mm}($i \hspace{-0.5mm}=\hspace{-0.5mm} 1$)} & \multicolumn{ 2}{|c}{IGam\hspace{0.3mm}($i \hspace{-0.5mm}=\hspace{-0.5mm} 2$)} & \multicolumn{ 2}{|c}{LN\hspace{0.3mm}($i \hspace{-0.5mm}=\hspace{-0.5mm} 3$)} & \multicolumn{ 2}{|c}{IGau\hspace{0.3mm}($i \hspace{-0.5mm}=\hspace{-0.5mm} 4$)} & \multicolumn{ 2}{|c}{RIG\hspace{0.3mm}($i \hspace{-0.5mm}=\hspace{-0.5mm} 5$)} & \multicolumn{ 2}{|c}{B-S\hspace{0.3mm}($i \hspace{-0.5mm}=\hspace{-0.5mm} 6$)} & \multicolumn{ 2}{|c}{W\hspace{0.3mm}($i \hspace{-0.5mm}=\hspace{-0.5mm} 7$)} & \multicolumn{ 2}{|c}{OK\hspace{0.3mm}($i \hspace{-0.5mm}=\hspace{-0.5mm} 8$)} & \multicolumn{ 2}{|c}{BK\hspace{0.3mm}($i \hspace{-0.5mm}=\hspace{-0.5mm} 9$)} & \multicolumn{ 2}{|c|}{EDF\hspace{0.3mm}($i \hspace{-0.5mm}=\hspace{-0.5mm} 10$)} \\
                \hline
                \multicolumn{ 1}{|c}{$n$} & \multicolumn{ 1}{|c|}{$j$} &      mean &       std. &    mean &       std. &  mean &       std. &       mean &       std. &       mean &       std. &       mean &       std. &       mean &       std. &       mean &       std. &       mean &       std. &       mean &       std. \\
                \hline
                   \multicolumn{ 1}{|c|}{256} &          1 &       1.39 &       1.27 &       1.37 &       1.34 &     \cellcolor{cyan!25}  1.31 &       1.26 &       1.37 &       1.32 &       1.37 &       1.32 &     \cellcolor{cyan!25}  1.31 &       1.26 &       1.37 &       1.32 &       1.54 &       1.43 &       1.47 &       1.34 &       1.54 &       1.44 \\

                       \multicolumn{ 1}{|c|}{} &          2 &       2.59 &       2.36 &       2.50 &       2.53 &     \cellcolor{cyan!25}  2.36 &       2.42 &       2.49 &       2.46 &       2.49 &       2.47 &     \cellcolor{cyan!25}  2.36 &       2.42 &       2.50 &       2.51 &       2.76 &       2.44 &       2.67 &       2.57 &       2.76 &       2.45 \\

                       \multicolumn{ 1}{|c|}{} &          3 &       6.70 &       6.28 &       6.77 &       6.58 &      \cellcolor{cyan!25} 6.62 &       6.28 &       6.69 &       6.45 &       6.69 &       6.45 &    \cellcolor{cyan!25}   6.62 &       6.28 &       6.74 &       6.54 &       7.44 &       7.01 &       6.70 &       6.39 &       7.44 &       7.00 \\

                       \multicolumn{ 1}{|c|}{} &          4 &       3.74 &       3.14 &       3.60 &       3.27 &     \cellcolor{cyan!25}  3.36 &       3.15 &       3.61 &       3.20 &       3.61 &       3.21 &     \cellcolor{cyan!25}  3.36 &       3.14 &       3.60 &       3.26 &       3.96 &       3.24 &       3.80 &       3.27 &       3.97 &       3.24 \\

                       \multicolumn{ 1}{|c|}{} &          5 &       1.14 &       1.10 &       1.18 &       1.13 &      1.18 &       1.07 &       1.17 &       1.13 &       1.17 &       1.13 &      1.18 &       1.07 &       1.17 &       1.12 &       1.26 &       1.19 &    \cellcolor{cyan!25}   1.10 &       1.14 &       1.26 &       1.19 \\

                       \multicolumn{ 1}{|c|}{} &          6 &       1.93 &       1.83 &       1.91 &       1.89 &      \cellcolor{cyan!25} 1.81 &       1.80 &       1.91 &       1.87 &       1.91 &       1.87 &     \cellcolor{cyan!25}  1.81 &       1.80 &       1.91 &       1.88 &       2.13 &       1.94 &       2.05 &       1.93 &       2.13 &       1.95 \\

                       \multicolumn{ 1}{|c|}{} &          7 &       1.75 &       1.82 &       1.77 &       1.99 &     \cellcolor{cyan!25}  1.68 &       1.83 &       1.76 &       1.96 &       1.76 &       1.96 &     \cellcolor{cyan!25}  1.68 &       1.83 &       1.76 &       1.95 &       1.95 &       1.93 &       1.73 &       2.04 &       1.95 &       1.92 \\

                       \multicolumn{ 1}{|c|}{} &          8 &       2.69 &       2.71 &       2.75 &       2.78 &     2.81 &       2.66 &       2.67 &       2.71 &       2.67 &       2.71 &       2.81 &       2.66 &       2.75 &       2.75 &       3.02 &       2.88 &    \cellcolor{cyan!25}   2.56 &       2.59 &       3.03 &       2.88 \\
            \hline
                  \multicolumn{ 1}{|c|}{1000} &          1 &       0.40 &       0.36 &       0.39 &       0.36 &       \cellcolor{cyan!25}  0.38 &       0.35 &       0.39 &       0.36 &       0.39 &       0.36 &     \cellcolor{cyan!25}  0.38 &       0.35 &       0.39 &       0.36 &       0.43 &       0.39 &       0.41 &       0.36 &       0.43 &       0.39 \\

                       \multicolumn{ 1}{|c|}{} &          2 &       0.72 &       0.70 &       0.70 &       0.69 &      \cellcolor{cyan!25} 0.67 &       0.67 &       0.70 &       0.69 &       0.70 &       0.69 &     \cellcolor{cyan!25}  0.67 &       0.67 &       0.70 &       0.69 &       0.75 &       0.71 &       0.73 &       0.72 &       0.75 &       0.71 \\

                       \multicolumn{ 1}{|c|}{} &          3 &       2.01 &       2.09 &       2.05 &       2.22 &      2.02 &       2.16 &       2.04 &       2.15 &       2.04 &       2.15 &      2.02 &       2.16 &       2.05 &       2.20 &       2.23 &       2.29 &   \cellcolor{cyan!25}    1.99 &       2.09 &       2.23 &       2.30 \\

                       \multicolumn{ 1}{|c|}{} &          4 &       0.99 &       0.79 &       0.97 &       0.82 &     \cellcolor{cyan!25}  0.93 &       0.80 &       0.97 &       0.81 &       0.97 &       0.81 &      \cellcolor{cyan!25} 0.93 &       0.80 &       0.97 &       0.82 &       1.03 &       0.82 &       1.00 &       0.82 &       1.03 &       0.83 \\

                       \multicolumn{ 1}{|c|}{} &          5 &       0.31 &       0.31 &       0.31 &       0.31 &      0.31 &       0.30 &       0.31 &       0.31 &       0.31 &       0.31 &       0.31 &       0.30 &       0.31 &       0.31 &       0.33 &       0.32 &     \cellcolor{cyan!25}  0.30 &       0.32 &       0.33 &       0.32 \\

                       \multicolumn{ 1}{|c|}{} &          6 &       0.47 &       0.43 &       0.47 &       0.43 &     \cellcolor{cyan!25}  0.46 &       0.42 &       0.47 &       0.43 &       0.47 &       0.43 &     \cellcolor{cyan!25}  0.46 &       0.42 &       0.47 &       0.43 &       0.50 &       0.45 &       0.49 &       0.43 &       0.50 &       0.45 \\

                       \multicolumn{ 1}{|c|}{} &          7 &       0.46 &       0.46 &       0.46 &       0.48 &     \cellcolor{cyan!25}  0.44 &       0.45 &       0.46 &       0.48 &       0.46 &       0.48 &     \cellcolor{cyan!25}  0.44 &       0.45 &       0.46 &       0.48 &       0.49 &       0.50 &       0.46 &       0.48 &       0.49 &       0.50 \\

                       \multicolumn{ 1}{|c|}{} &          8 &       0.72 &       0.74 &       0.74 &       0.75 &     0.75 &       0.74 &       0.73 &       0.75 &       0.73 &       0.75 &      0.75 &       0.74 &       0.74 &       0.75 &       0.78 &       0.80 &     \cellcolor{cyan!25}  0.70 &       0.72 &       0.78 &       0.81 \\
            \hline
            \end{tabular}
            \caption{The mean and standard deviation of the $\mathrm{ISE}_{i,j,n}^{(k)}, ~k = 1,2,\dots,M$, for the eight distributions ($i = 1, 2, \dots, 8$), the ten estimators ($j = 1, 2, \dots, 10$) and the two sample sizes ($n = 256, 1000$). All the values presented in the table have been multiplied by $10^4$. The ordinary kernel estimator $\hat{F}_8$ is denoted by $\mathrm{OK}$, the boundary kernel estimator $\hat{F}_9$ is denoted by $\mathrm{BK}$, and the empirical c.d.f.\ $\hat{F}_{10}$ is denoted by $\mathrm{EDF}$. For each line in the table, the lowest $\mathrm{ISE}$ means are highlighted in blue.}
            \label{table:1}
        \end{table}
    \end{landscape}
    }

    {
    \newgeometry{bottom=3.5cm}
    \pagestyle{empty}
    \begin{landscape}
        \def\arraystretch{1.22}
        \setlength\tabcolsep{1.1mm}
        \begin{table}
            \captionsetup{width=0.9\linewidth}
            \centering
            \begin{tabular}{|cc|c|c|c|c|c|c|c|c|c|c|}
                \hline
                \multicolumn{ 1}{|c}{$\times 10^{-4}$} & \multicolumn{ 1}{|c}{$i$} & \multicolumn{ 1}{|c}{Gam\hspace{0.3mm}($i \hspace{-0.5mm}=\hspace{-0.5mm} 1$)} & \multicolumn{ 1}{|c}{IGam\hspace{0.3mm}($i \hspace{-0.5mm}=\hspace{-0.5mm} 2$)} & \multicolumn{ 1}{|c}{LN\hspace{0.3mm}($i \hspace{-0.5mm}=\hspace{-0.5mm} 3$)} & \multicolumn{ 1}{|c}{IGau\hspace{0.3mm}($i \hspace{-0.5mm}=\hspace{-0.5mm} 4$)} & \multicolumn{ 1}{|c}{RIG\hspace{0.3mm}($i \hspace{-0.5mm}=\hspace{-0.5mm} 5$)} & \multicolumn{ 1}{|c}{B-S\hspace{0.3mm}($i \hspace{-0.5mm}=\hspace{-0.5mm} 6$)} & \multicolumn{ 1}{|c}{W\hspace{0.3mm}($i \hspace{-0.5mm}=\hspace{-0.5mm} 7$)} & \multicolumn{ 1}{|c}{OK\hspace{0.3mm}($i \hspace{-0.5mm}=\hspace{-0.5mm} 8$)} & \multicolumn{ 1}{|c}{BK\hspace{0.3mm}($i \hspace{-0.5mm}=\hspace{-0.5mm} 9$)} & \multicolumn{ 1}{|c|}{EDF\hspace{0.3mm}($i \hspace{-0.5mm}=\hspace{-0.5mm} 10$)} \\
                \hline
                \multicolumn{ 1}{|c}{$n$} & \multicolumn{ 1}{|c|}{$j$} &      diff.\ with &        diff.\ with &       diff.\ with &         diff.\ with &          diff.\ with &        diff.\ with &       diff.\ with &         diff.\ with &           diff.\ with &          diff.\ with   \\[-2.5mm]
                  \multicolumn{ 1}{|c|}{} &  &      lowest &        lowest &       lowest &         lowest &          lowest &        lowest &       lowest &         lowest &           lowest &          lowest   \\[-2.5mm]
                  \multicolumn{ 1}{|c|}{} &  &      mean &        mean &       mean &         mean &          mean &        mean &       mean &         mean &           mean &          mean   \\
                \hline
                   \multicolumn{ 1}{|c|}{256} &          1 &       0.08 &       0.06 &       0.00 &       0.06 &       0.06 &       0.00 &       0.06 &       0.23 &       0.16 &       0.23 \\

                      \multicolumn{ 1}{|c|}{} &          2 &       0.23 &       0.14 &       0.00 &       0.14 &       0.13 &       0.00 &       0.14 &       0.40 &       0.32 &       0.40 \\

                      \multicolumn{ 1}{|c|}{} &          3 &       0.08 &       0.15 &       0.01 &       0.08 &       0.07 &       0.00 &       0.12 &       0.82 &       0.09 &       0.82 \\

                      \multicolumn{ 1}{|c|}{} &          4 &       0.38 &       0.24 &       0.00 &       0.25 &       0.24 &       0.00 &       0.23 &       0.60 &       0.43 &       0.60 \\

                      \multicolumn{ 1}{|c|}{} &          5 &       0.05 &       0.08 &       0.09 &       0.07 &       0.07 &       0.08 &       0.08 &       0.16 &       0.00 &       0.17 \\

                      \multicolumn{ 1}{|c|}{} &          6 &       0.12 &       0.10 &       0.00 &       0.10 &       0.10 &       0.00 &       0.10 &       0.32 &       0.24 &       0.32 \\

                      \multicolumn{ 1}{|c|}{} &          7 &       0.07 &       0.10 &       0.00 &       0.09 &       0.09 &       0.00 &       0.08 &       0.27 &       0.05 &       0.27 \\

                      \multicolumn{ 1}{|c|}{} &          8 &       0.13 &       0.19 &       0.25 &       0.11 &       0.11 &       0.25 &       0.19 &       0.46 &       0.00 &       0.46 \\
                \hline
                \multicolumn{ 2}{|c|}{total} &      1.14 &       1.07 &     \cellcolor{cyan!25}   0.35 &       0.89 &       0.88 &      \cellcolor{cyan!25}  0.34 &       0.99 &       3.26 &       1.29 &       3.28 \\

                \hline
                \hline
                  \multicolumn{ 1}{|c|}{1000} &          1 &       0.02 &       0.01 &       0.00 &       0.01 &       0.01 &       0.00 &       0.01 &       0.05 &       0.03 &       0.05 \\

                      \multicolumn{ 1}{|c|}{} &          2 &       0.04 &       0.02 &       0.00 &       0.02 &       0.02 &       0.00 &       0.02 &       0.07 &       0.06 &       0.08 \\

                      \multicolumn{ 1}{|c|}{} &          3 &       0.02 &       0.06 &       0.03 &       0.05 &       0.05 &       0.03 &       0.06 &       0.24 &       0.00 &       0.24 \\

                      \multicolumn{ 1}{|c|}{} &          4 &       0.06 &       0.04 &       0.00 &       0.04 &       0.04 &       0.00 &       0.04 &       0.10 &       0.07 &       0.10 \\

                      \multicolumn{ 1}{|c|}{} &          5 &       0.01 &       0.02 &       0.02 &       0.01 &       0.01 &       0.02 &       0.02 &       0.03 &       0.00 &       0.03 \\

                      \multicolumn{ 1}{|c|}{} &          6 &       0.02 &       0.02 &       0.00 &       0.02 &       0.02 &       0.00 &       0.02 &       0.04 &       0.04 &       0.04 \\

                      \multicolumn{ 1}{|c|}{} &          7 &       0.01 &       0.02 &       0.00 &       0.02 &       0.02 &       0.00 &       0.02 &       0.05 &       0.02 &       0.05 \\

                      \multicolumn{ 1}{|c|}{} &          8 &       0.02 &       0.04 &       0.05 &       0.03 &       0.03 &       0.05 &       0.04 &       0.08 &       0.00 &       0.08 \\
                \hline
                \multicolumn{ 2}{|c|}{total} &  0.20 &       0.23 &      \cellcolor{cyan!25}  0.10 &       0.20 &       0.20 &      \cellcolor{cyan!25}  0.10 &       0.24 &       0.66 &       0.22 &       0.68 \\

                \hline
            \end{tabular}
            \caption{For each of the eight distributions ($i = 1, 2, \dots, 8$) and each of the two sample sizes ($n = 256, 1000$), a cell represents the mean of the $\mathrm{ISE}_{i,j,n}^{(k)}, ~k = 1,2,\dots,M$, minus the lowest $\mathrm{ISE}$ mean for that line (i.e., minus the $\mathrm{ISE}$ mean of the best estimator for that specific distribution and sample size). For each estimator ($j = 1, 2, \dots, 10$) and each sample size, the total of those differences to the best $\mathrm{ISE}$ mean is calculated on the line called ``total''. For each sample size, the lowest totals are highlighted in blue.}
            \label{table:2}
        \end{table}
    \end{landscape}
    }

\newgeometry{bottom=4cm}

    \newpage
    \begin{figure}[ht]
        \captionsetup{width=0.8\linewidth}
        \vspace{-0.5cm}
        \centering
        \begin{subfigure}[b]{0.40\textwidth}
            \centering
            \includegraphics[width=\textwidth, height=0.85\textwidth]{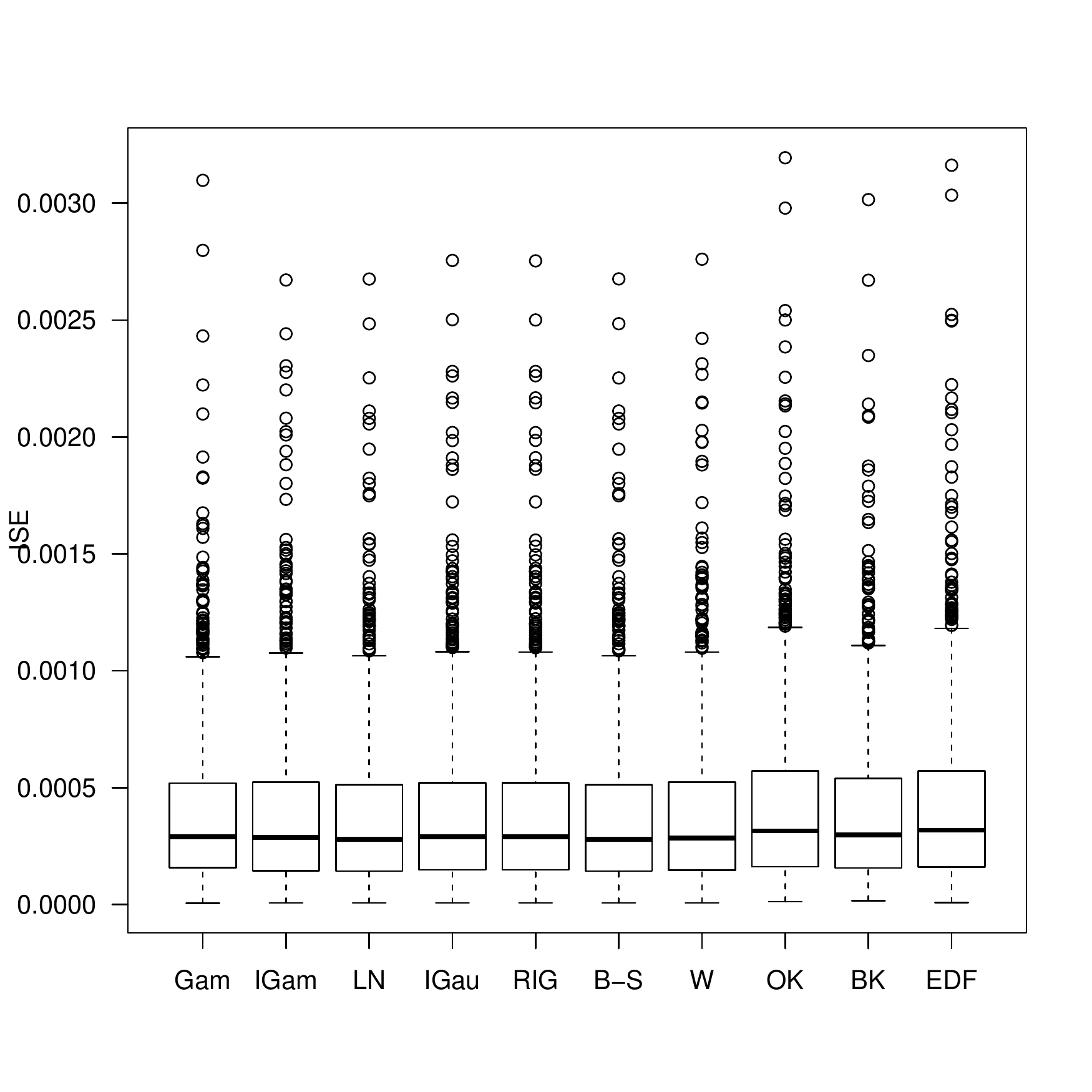}
            \vspace{-0.9cm}
            \caption{$\text{Burr}\hspace{0.2mm}(1, 3, 1)$}
        \end{subfigure}
        \quad
        \begin{subfigure}[b]{0.40\textwidth}
            \centering
            \includegraphics[width=\textwidth, height=0.85\textwidth]{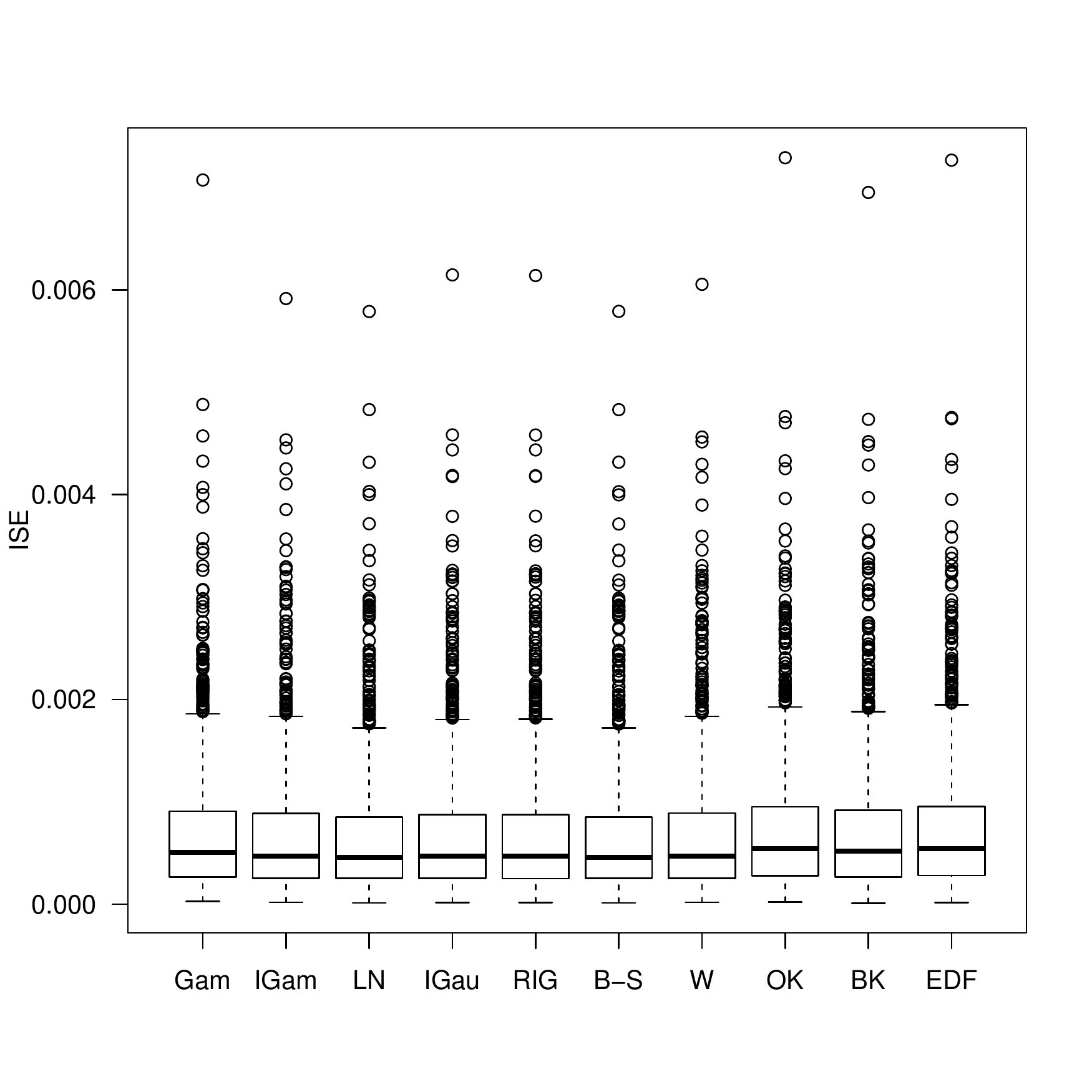}
            \vspace{-0.9cm}
            \caption{$\text{Gamma}\hspace{0.2mm}(0.6, 2)$}
        \end{subfigure}
        \begin{subfigure}[b]{0.40\textwidth}
            \centering
            \includegraphics[width=\textwidth, height=0.85\textwidth]{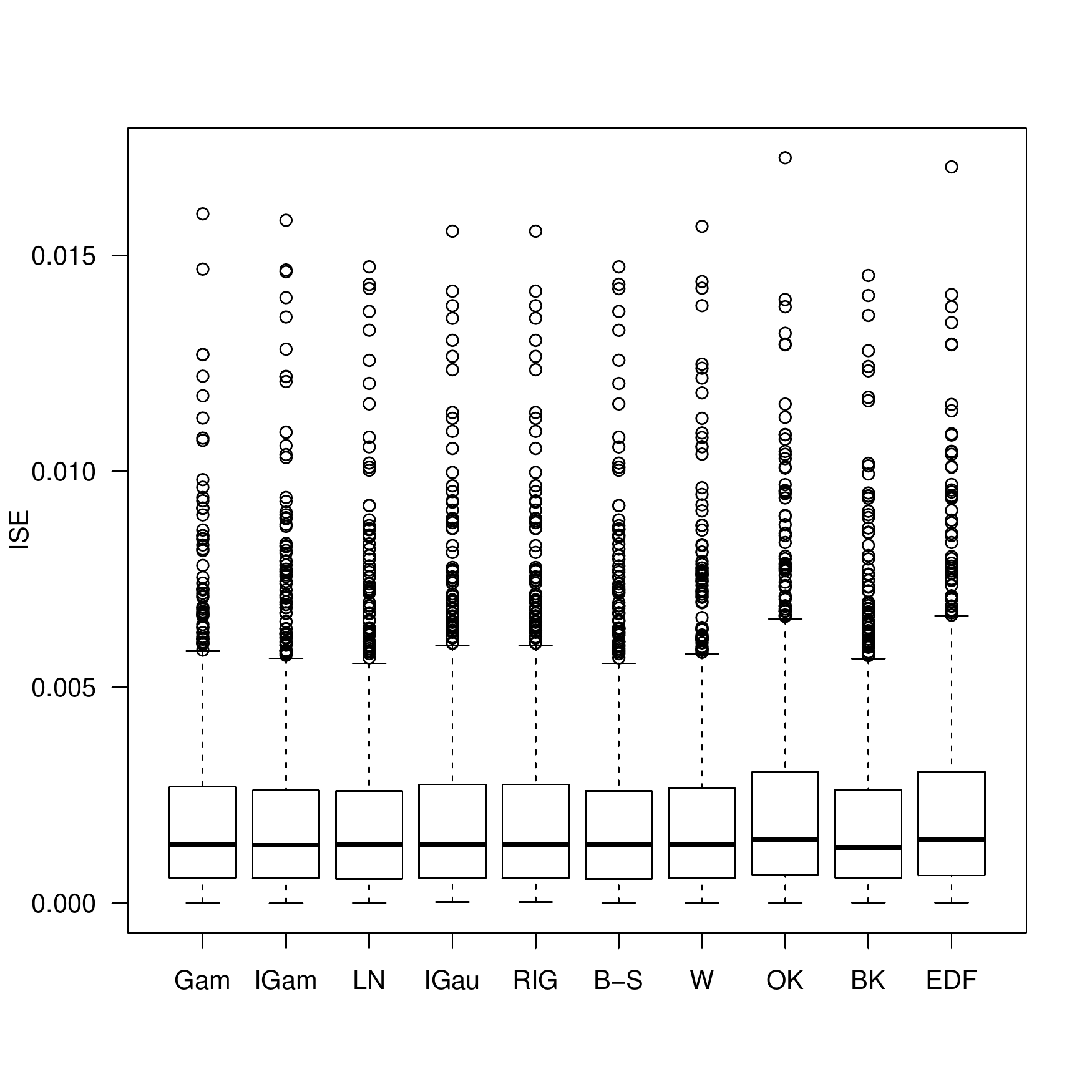}
            \vspace{-0.9cm}
            \caption{$\text{Gamma}\hspace{0.2mm}(4, 2)$}
        \end{subfigure}
        \quad
        \begin{subfigure}[b]{0.40\textwidth}
            \centering
            \includegraphics[width=\textwidth, height=0.85\textwidth]{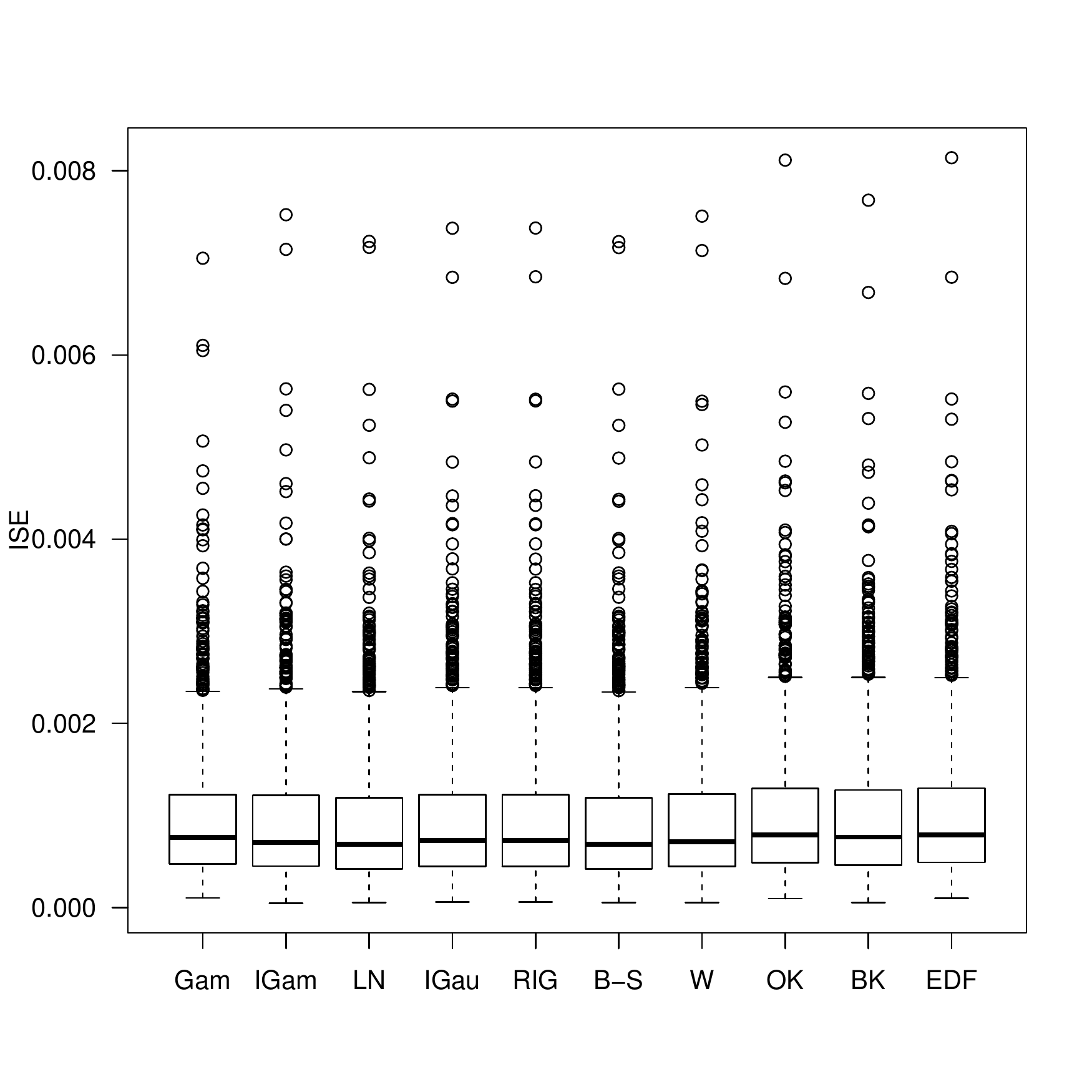}
            \vspace{-0.9cm}
            \caption{$\text{GeneralizedPareto}\hspace{0.2mm}(0.4, 1, 0)$}
        \end{subfigure}
        \begin{subfigure}[b]{0.40\textwidth}
            \centering
            \includegraphics[width=\textwidth, height=0.85\textwidth]{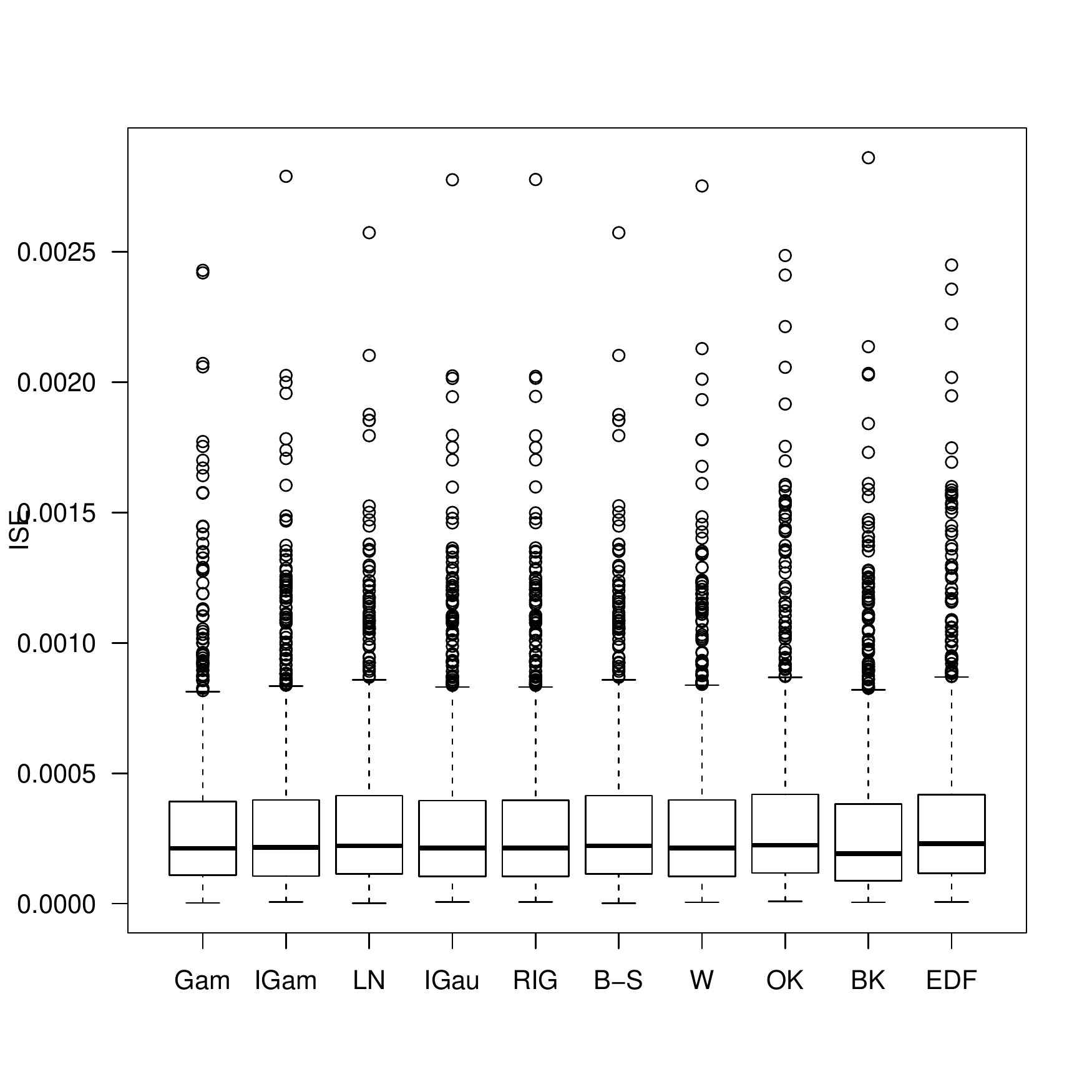}
            \vspace{-0.9cm}
            \caption{$\text{HalfNormal}\hspace{0.2mm}(1)$}
        \end{subfigure}
        \quad
        \begin{subfigure}[b]{0.40\textwidth}
            \centering
            \includegraphics[width=\textwidth, height=0.85\textwidth]{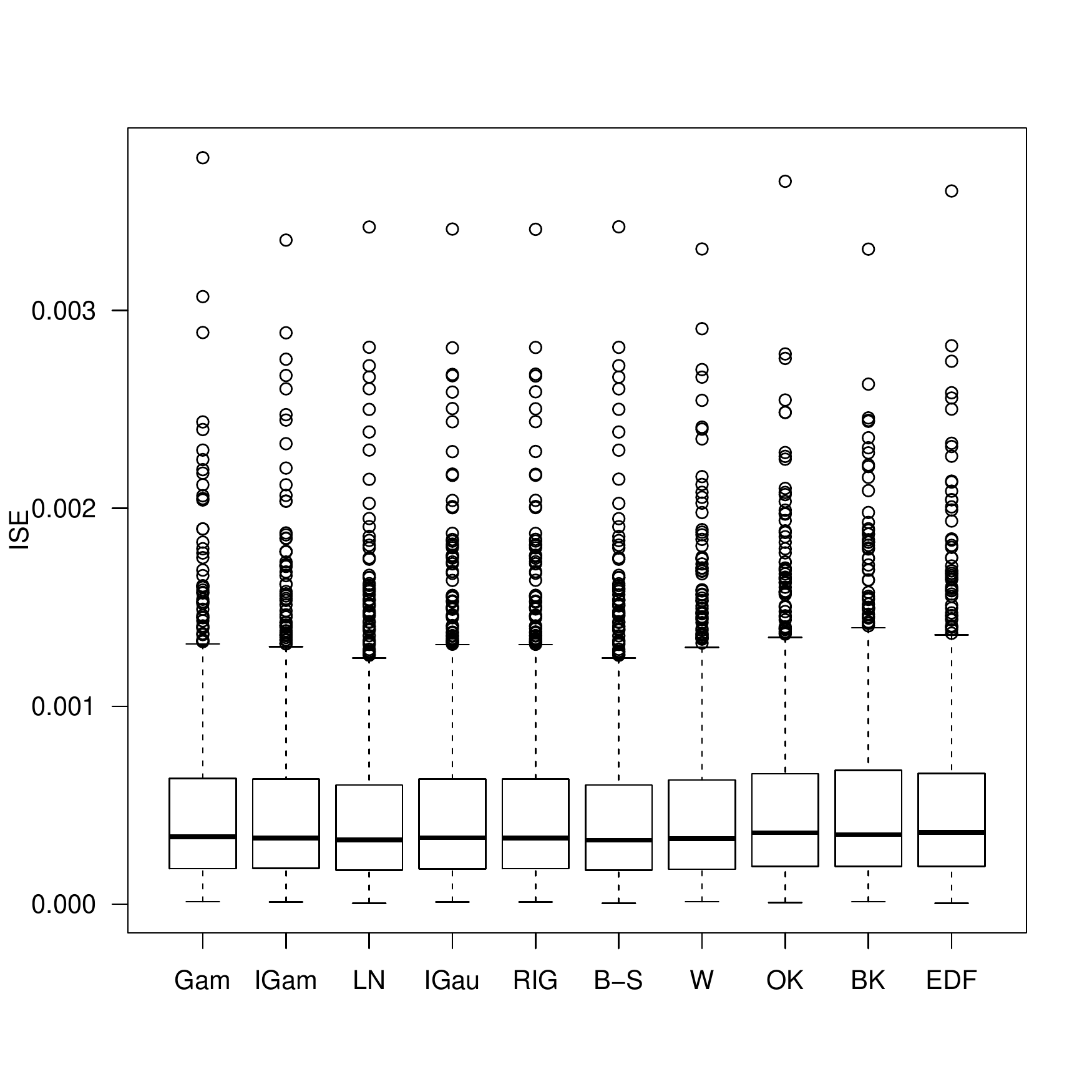}
            \vspace{-0.9cm}
            \caption{$\text{LogNormal}\hspace{0.2mm}(0, 0.75)$}
        \end{subfigure}
        \begin{subfigure}[b]{0.40\textwidth}
            \centering
            \includegraphics[width=\textwidth, height=0.85\textwidth]{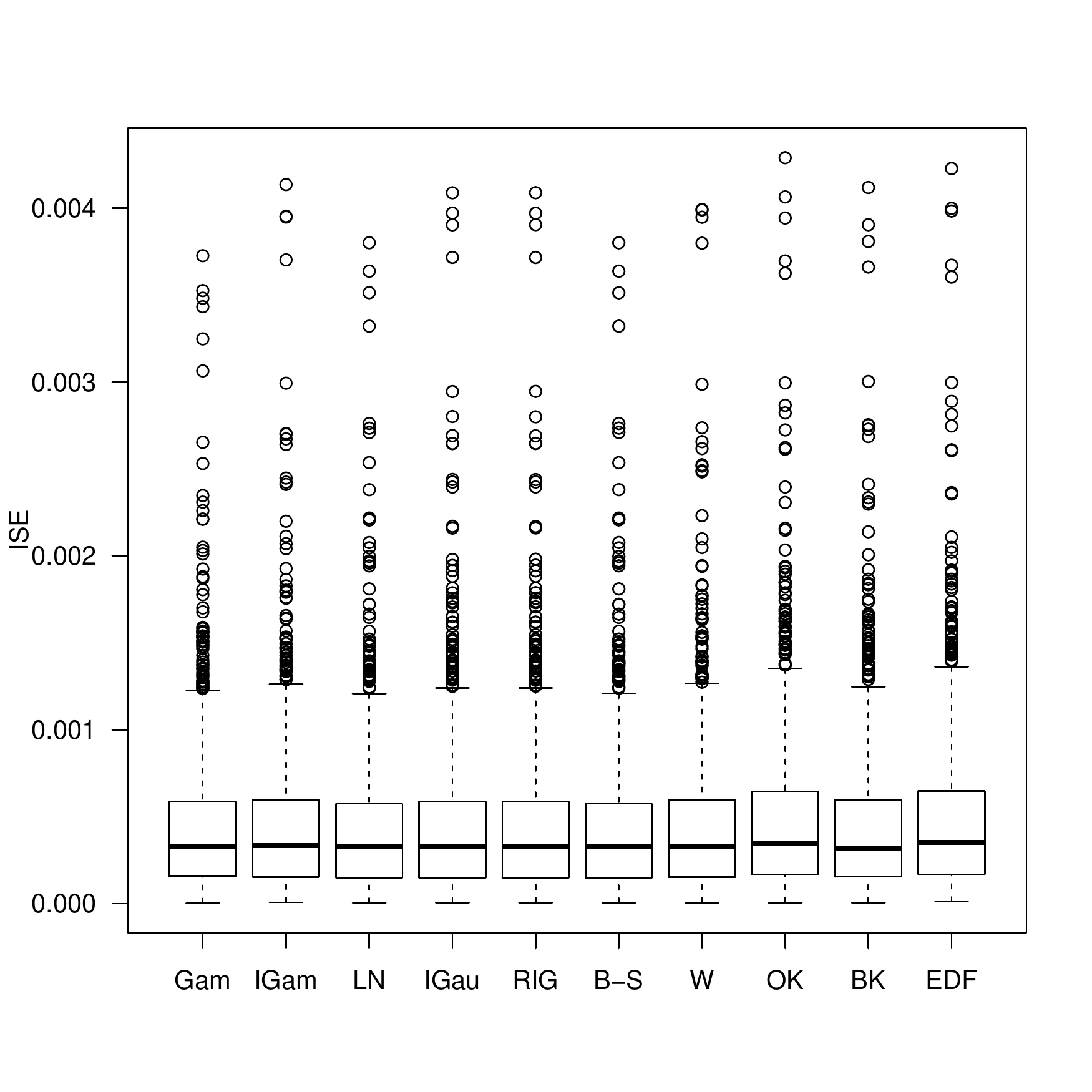}
            \vspace{-0.9cm}
            \caption{$\text{Weibull}\hspace{0.2mm}(1.5, 1.5)$}
        \end{subfigure}
        \quad
        \begin{subfigure}[b]{0.40\textwidth}
            \centering
            \includegraphics[width=\textwidth, height=0.85\textwidth]{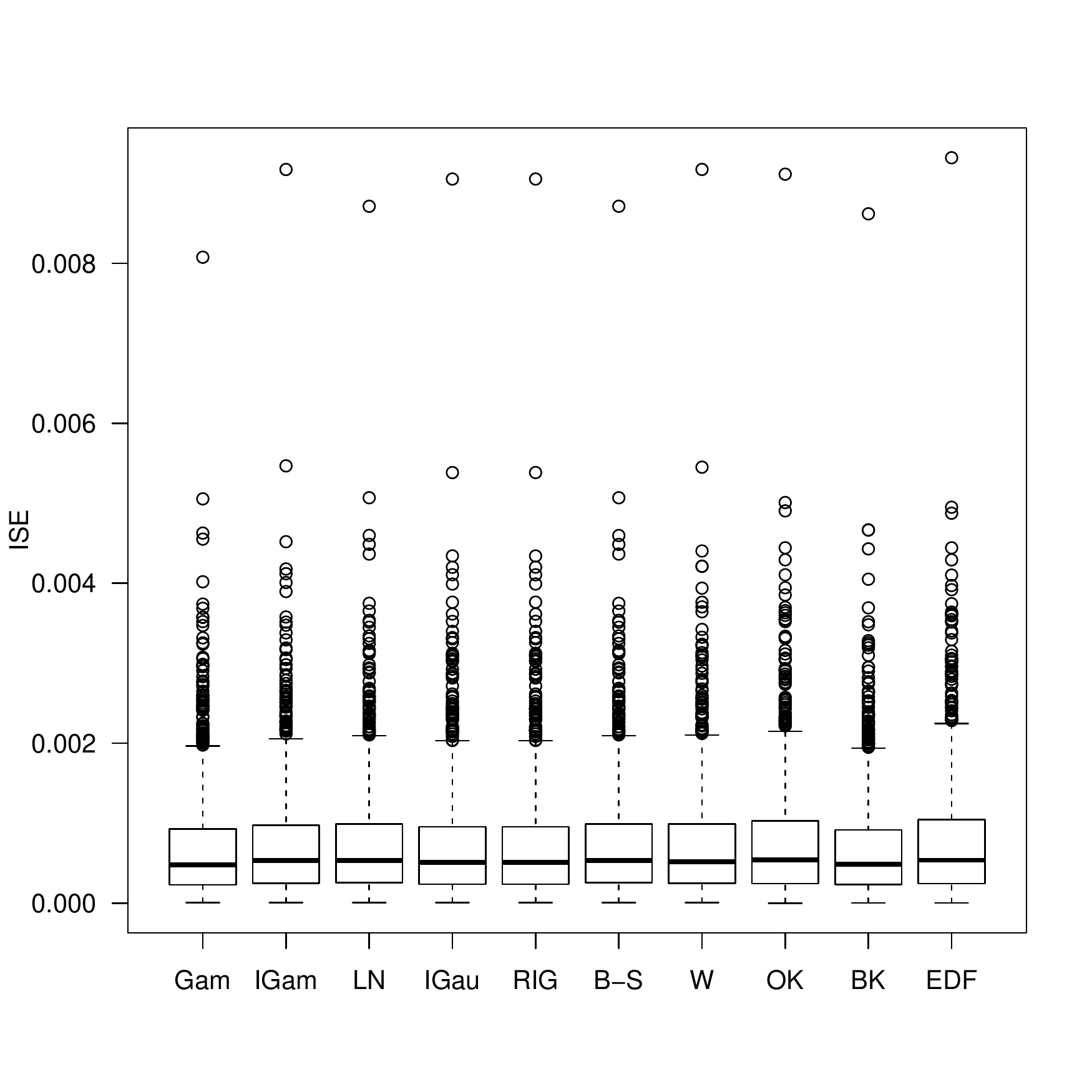}
            \vspace{-0.9cm}
            \caption{$\text{Weibull}\hspace{0.2mm}(3, 2)$}
        \end{subfigure}
        \caption{Boxplots of the $\mathrm{ISE}_{i,j,n}^{(k)}, ~k = 1,2,\dots,M$, for the eight distributions and the ten estimators, when the sample size is $n = 1000$.}
        \label{fig:ISE.boxplots}
    \end{figure}

    \newpage
    \begin{figure}[ht]
        \captionsetup{width=0.8\linewidth}
        \vspace{-0.5cm}
        \centering
        \includegraphics[width=\textwidth, height=1.2\textwidth]{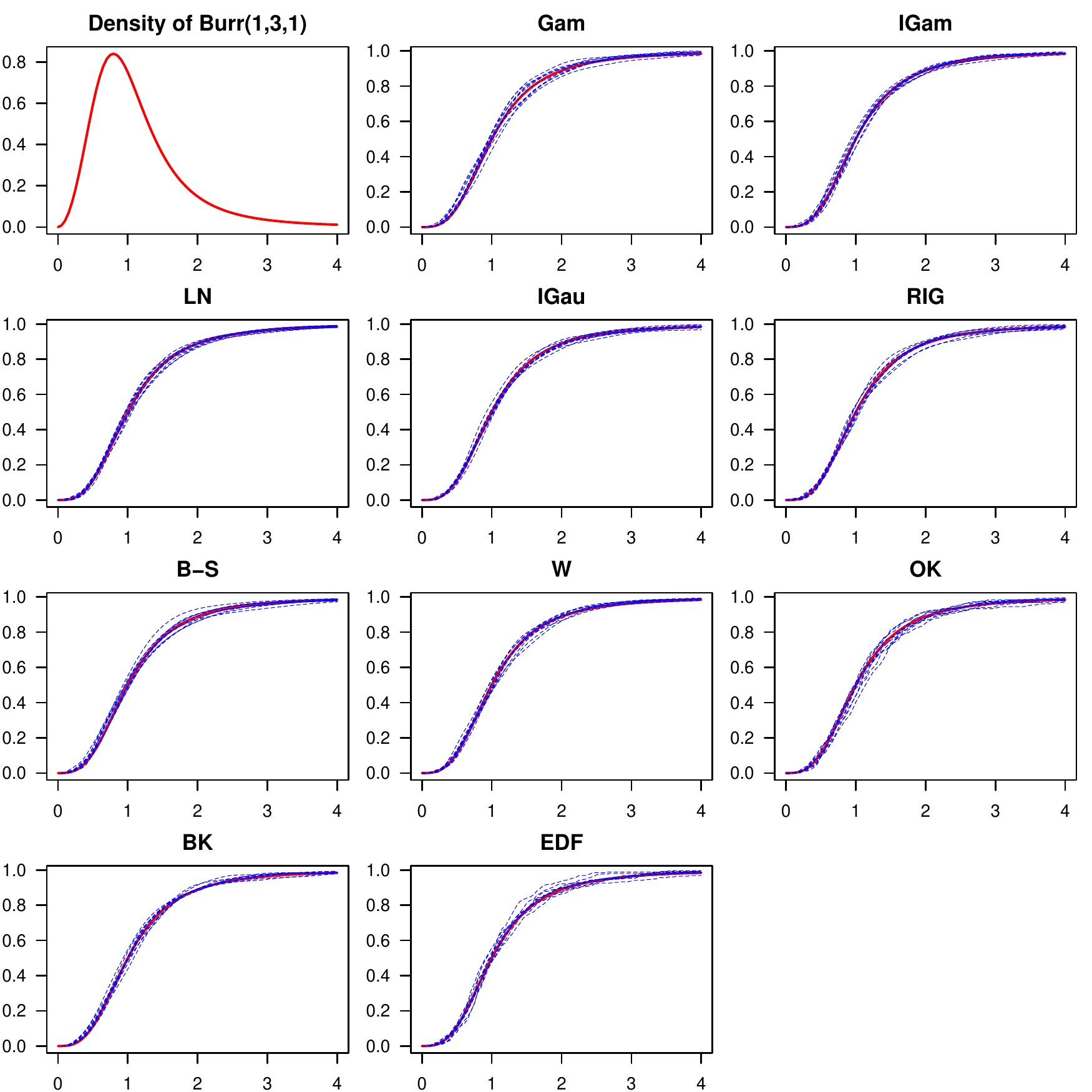}\vspace{-0.25cm}
        \vspace{5mm}
        \caption{The $\text{Burr}\hspace{0.2mm}(1,3,1)$ density function appears on the top-left, and the true c.d.f.\ is depicted {\color{red} in red} everywhere else. Each plot has ten estimates {\color{blue} in blue} for the $\text{Burr}\hspace{0.2mm}(1,3,1)$ c.d.f.\ using one of the ten estimators and $n = 256$.}
        \label{fig:cdf.estimates.burr}
    \end{figure}

    \newpage
    \begin{figure}[ht]
        \captionsetup{width=0.8\linewidth}
        \vspace{-0.5cm}
        \centering
        \includegraphics[width=\textwidth, height=1.2\textwidth]{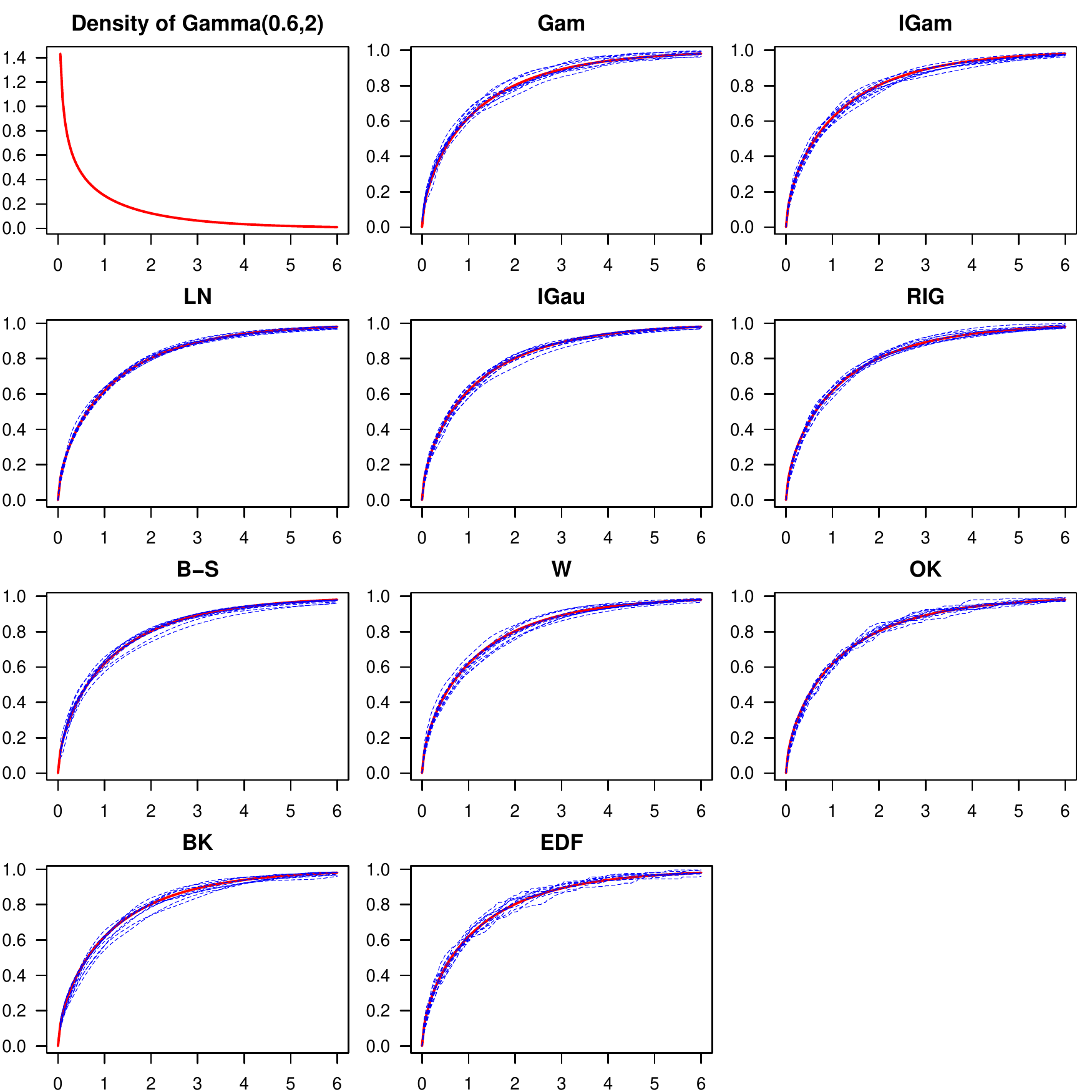}\vspace{-0.25cm}
        \vspace{5mm}
        \caption{The $\text{Gamma}\hspace{0.2mm}(0.6,2)$ density function appears on the top-left, and the true c.d.f.\ is depicted {\color{red} in red} everywhere else. Each plot has ten estimates {\color{blue} in blue} for the $\text{Gamma}\hspace{0.2mm}(0.6,2)$ c.d.f.\ using one of the ten estimators and $n = 256$.}
        \label{fig:cdf.estimates.gamma.1}
    \end{figure}

    \newpage
    \begin{figure}[ht]
        \captionsetup{width=0.8\linewidth}
        \vspace{-0.5cm}
        \centering
        \includegraphics[width=\textwidth, height=1.2\textwidth]{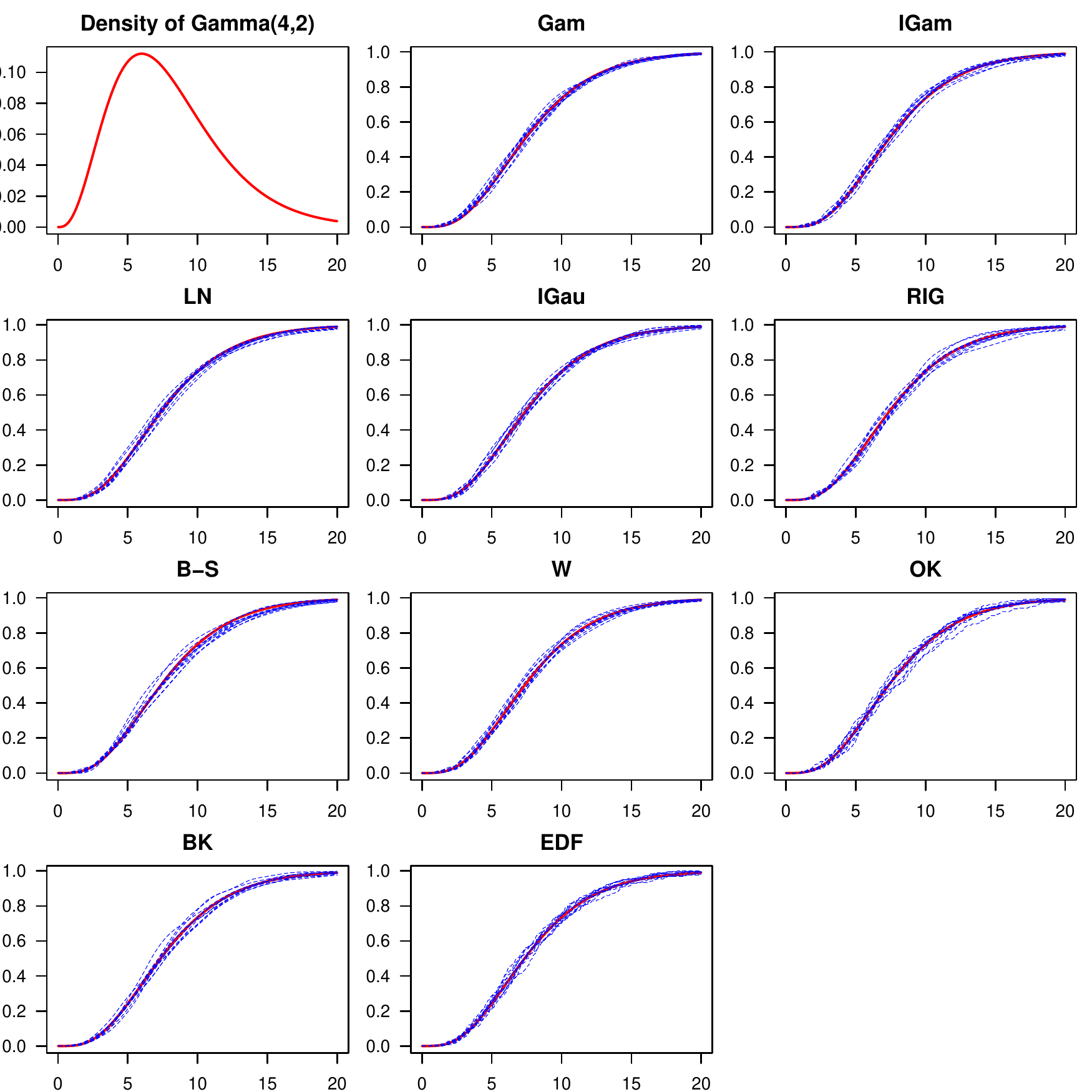}\vspace{-0.25cm}
        \vspace{5mm}
        \caption{The $\text{Gamma}\hspace{0.2mm}(4,2)$ density function appears on the top-left, and the true c.d.f.\ is depicted {\color{red} in red} everywhere else. Each plot has ten estimates {\color{blue} in blue} for the $\text{Gamma}\hspace{0.2mm}(4,2)$ c.d.f.\ using one of the ten estimators and $n = 256$.}
        \label{fig:cdf.estimates.gamma.2}
    \end{figure}

    \newpage
    \begin{figure}[ht]
        \captionsetup{width=0.8\linewidth}
        \centering
        \includegraphics[width=\textwidth, height=1.2\textwidth]{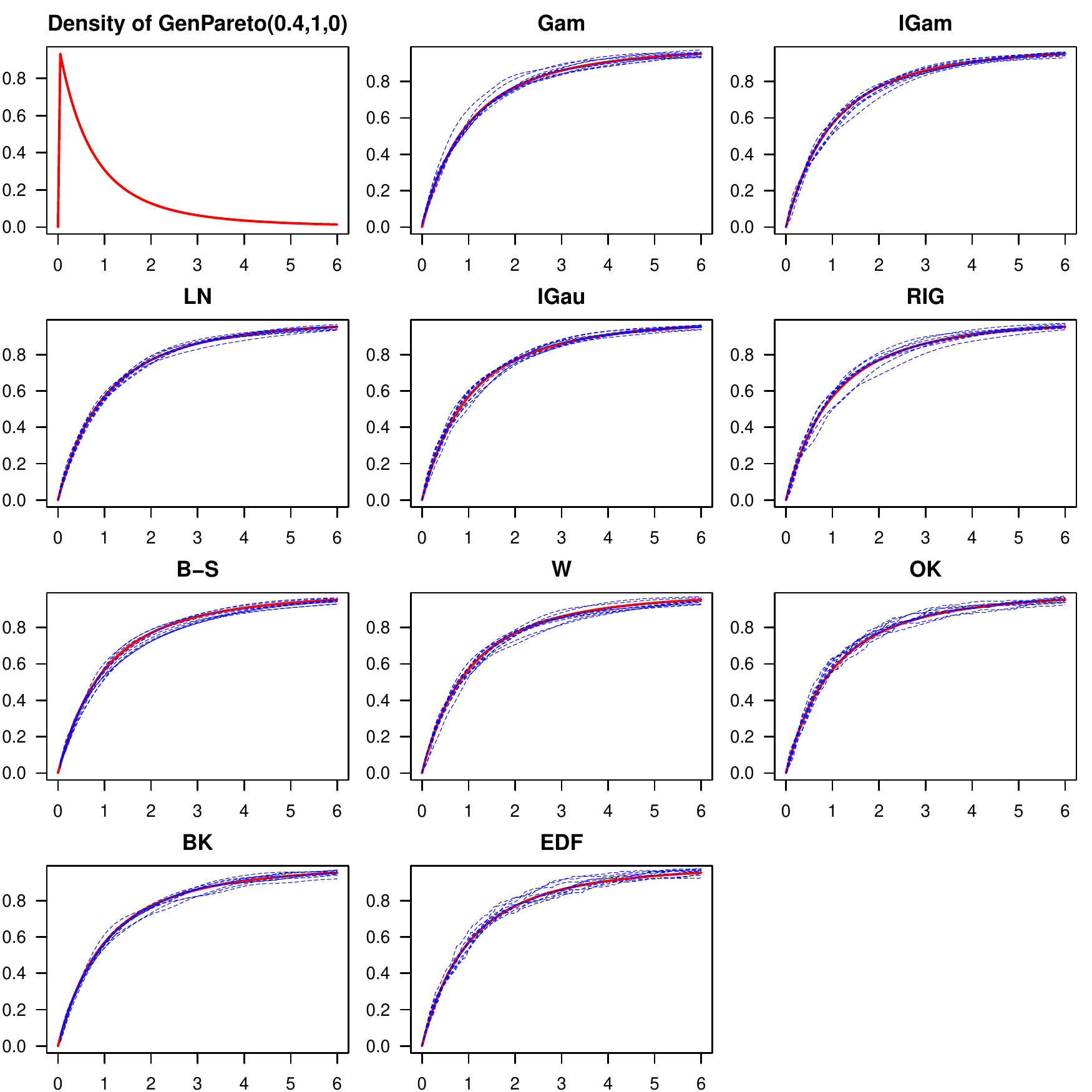}\vspace{-0.25cm}
        \vspace{5mm}
        \caption{The $\text{GeneralizedPareto}\hspace{0.2mm}(0.4,1,0)$ density function appears on the top-left, and the true c.d.f.\ is depicted {\color{red} in red} everywhere else. Each plot has ten estimates {\color{blue} in blue} for the $\text{GeneralizedPareto}\hspace{0.2mm}(0.4,1,0)$ c.d.f.\ using one of the ten estimators and $n = 256$.}
        \label{fig:cdf.estimates.gen.pareto}
    \end{figure}

    \newpage
    \begin{figure}[ht]
        \captionsetup{width=0.8\linewidth}
        \vspace{-0.5cm}
        \centering
        \includegraphics[width=\textwidth, height=1.2\textwidth]{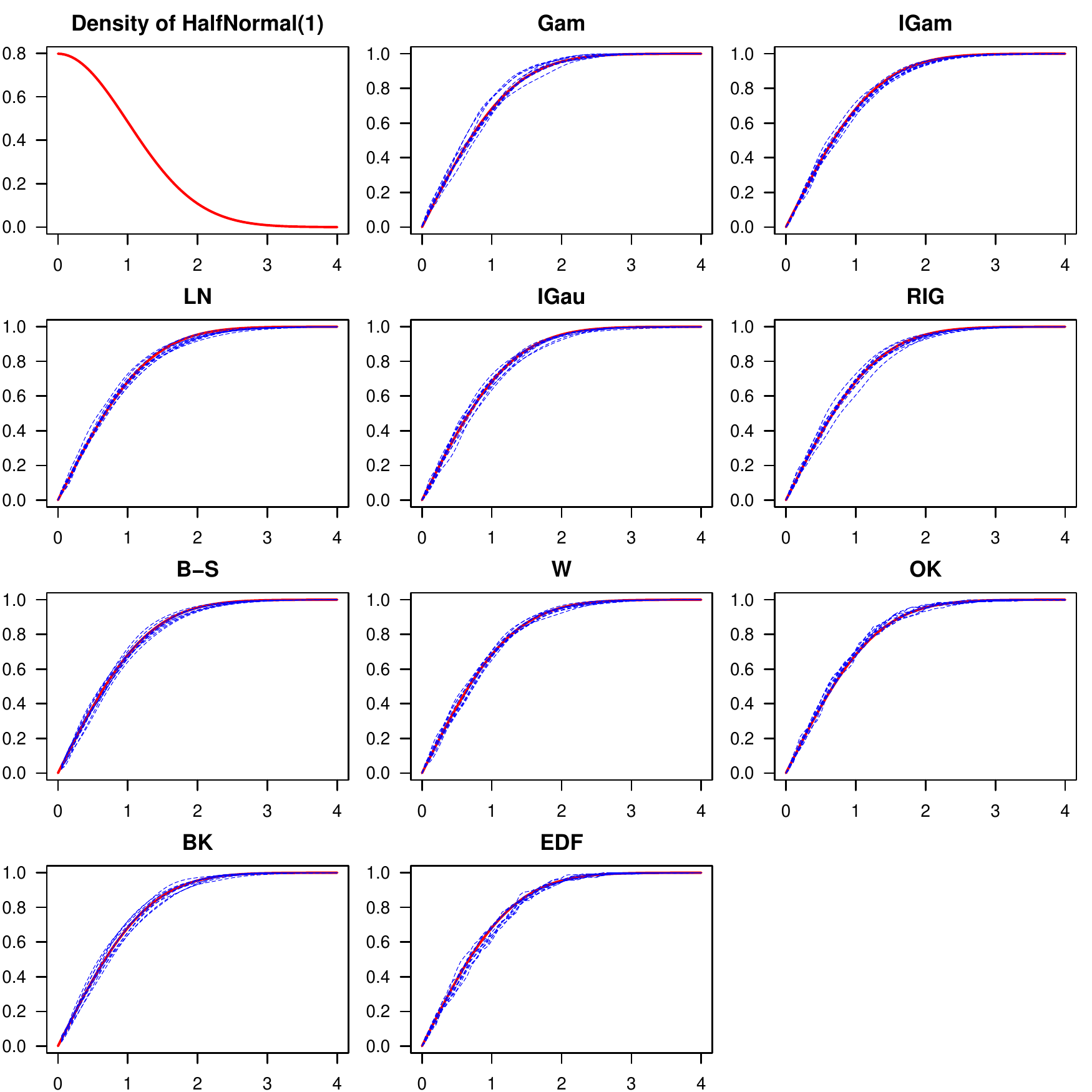}\vspace{-0.25cm}
        \vspace{5mm}
        \caption{The $\text{HalfNormal}\hspace{0.2mm}(1)$ density function appears on the top-left, and the true c.d.f.\ is depicted {\color{red} in red} everywhere else. Each plot has ten estimates {\color{blue} in blue} for the $\text{HalfNormal}\hspace{0.2mm}(1)$ c.d.f.\ using one of the ten estimators and $n = 256$.}
        \label{fig:cdf.estimates.halfnormal}
    \end{figure}

    \newpage
    \begin{figure}[ht]
        \captionsetup{width=0.8\linewidth}
        \vspace{-0.5cm}
        \centering
        \includegraphics[width=\textwidth, height=1.2\textwidth]{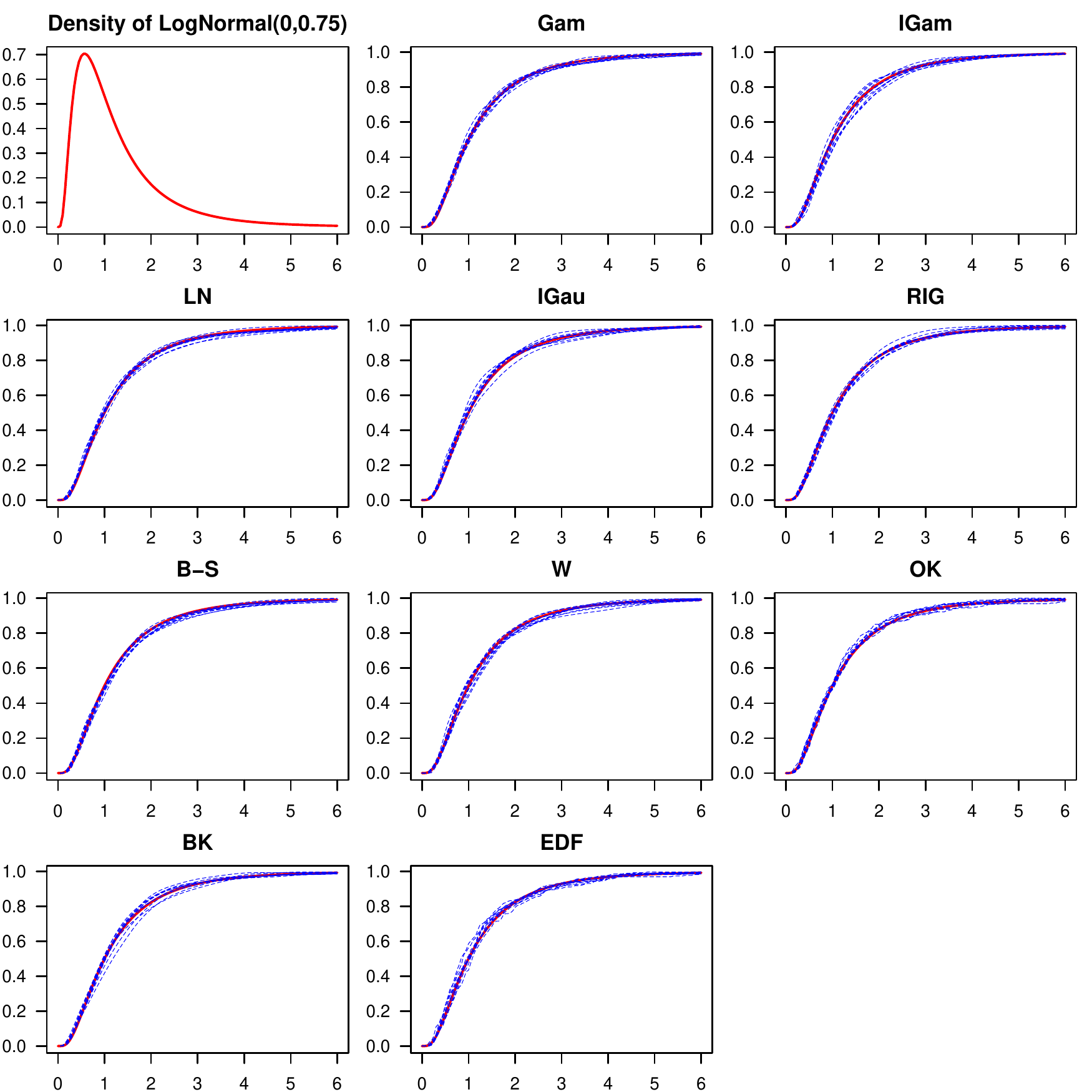}\vspace{-0.25cm}
        \vspace{5mm}
        \caption{The $\text{LogNormal}\hspace{0.2mm}(0,0.75)$ density function appears on the top-left, and the true c.d.f.\ is depicted {\color{red} in red} everywhere else. Each plot has ten estimates {\color{blue} in blue} for the $\text{LogNormal}\hspace{0.2mm}(0,0.75)$ c.d.f.\ using one of the ten estimators and $n = 256$.}
        \label{fig:cdf.estimates.lognormal}
    \end{figure}

    \newpage
    \begin{figure}[ht]
        \captionsetup{width=0.8\linewidth}
        \vspace{-0.5cm}
        \centering
        \includegraphics[width=\textwidth, height=1.2\textwidth]{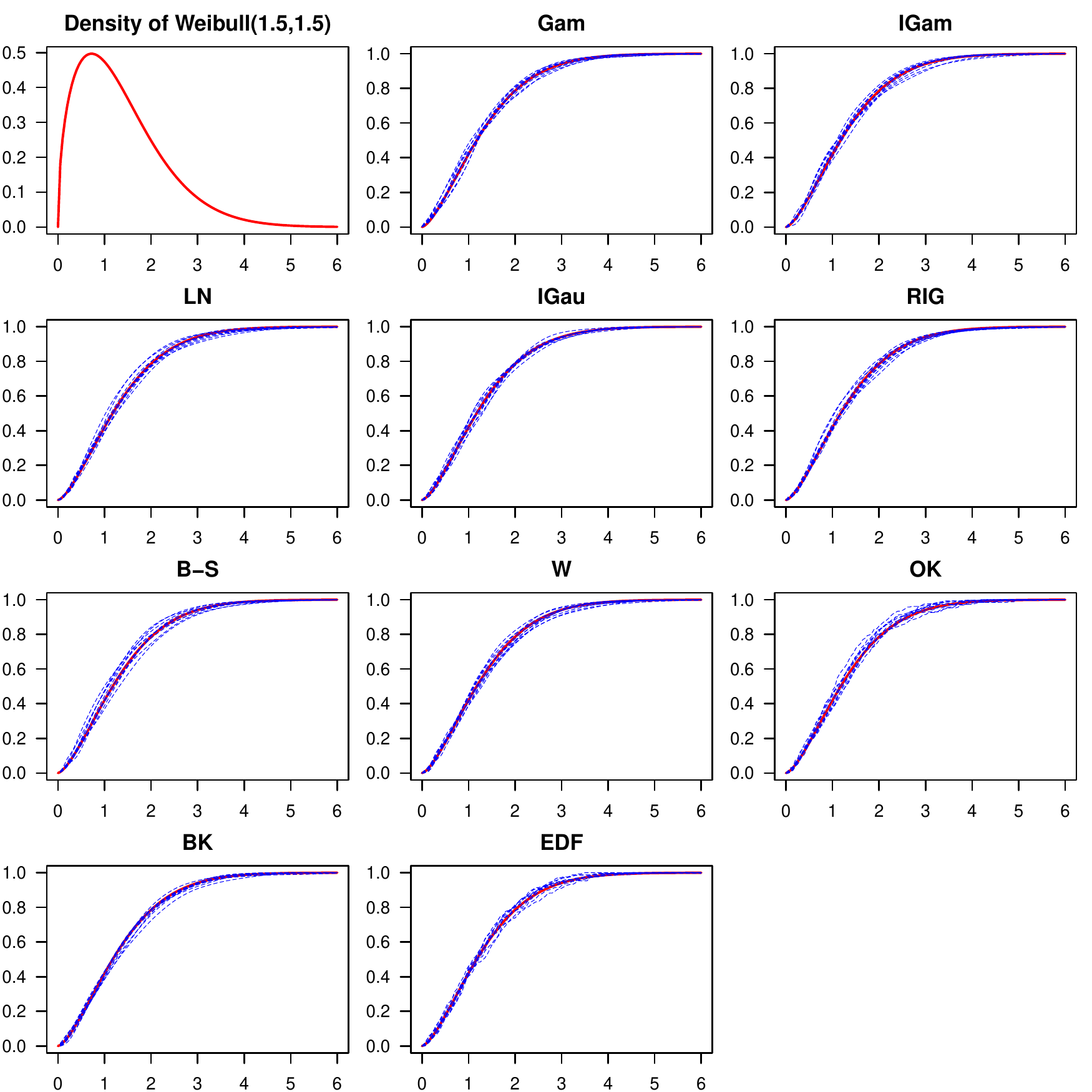}\vspace{-0.25cm}
        \vspace{5mm}
        \caption{The $\text{Weibull}\hspace{0.2mm}(1.5,1.5)$ density function appears on the top-left, and the true c.d.f.\ is depicted {\color{red} in red} everywhere else. Each plot has ten estimates {\color{blue} in blue} for the $\text{Weibull}\hspace{0.2mm}(1.5,1.5)$ c.d.f.\ using one of the ten estimators and $n = 256$.}
        \label{fig:cdf.estimates.weibull.1}
    \end{figure}

    \newpage
    \begin{figure}[ht]
        \captionsetup{width=0.8\linewidth}
        \vspace{-0.5cm}
        \centering
        \includegraphics[width=\textwidth, height=1.2\textwidth]{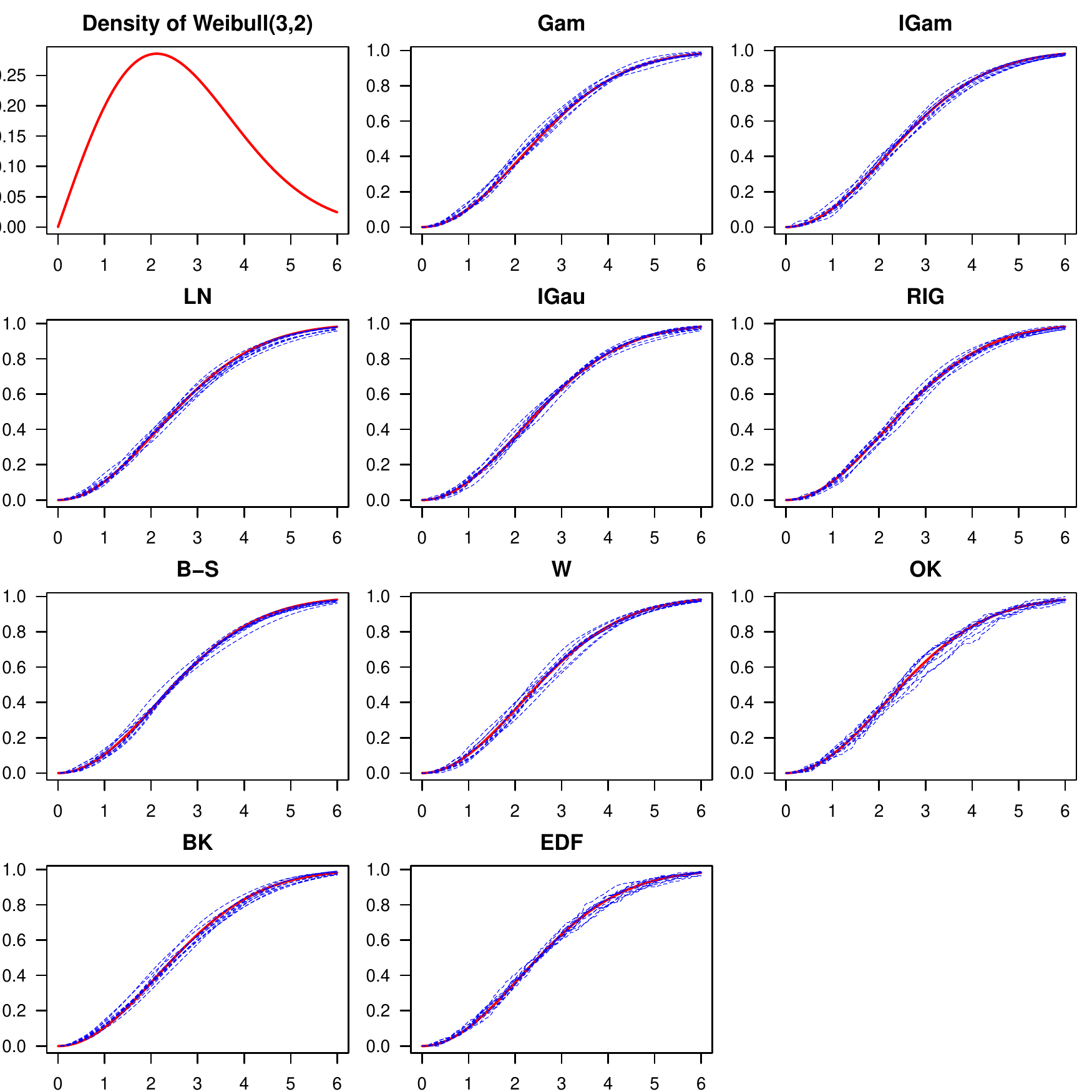}\vspace{-0.25cm}
        \vspace{5mm}
        \caption{The $\text{Weibull}\hspace{0.2mm}(3,2)$ density function appears on the top-left, and the true c.d.f.\ is depicted {\color{red} in red} everywhere else. Each plot has ten estimates {\color{blue} in blue} for the $\text{Weibull}\hspace{0.2mm}(3,2)$ c.d.f.\ using one of the ten estimators and $n = 256$.}
        \label{fig:cdf.estimates.weibull.2}
    \end{figure}

\restoregeometry

\newpage
\section{Discussion of the simulation results}\label{sec:discussion}

In Table~\ref{table:1}, we see that the LN and B-S kernel c.d.f.\ estimators performed the best (had the lowest $\mathrm{ISE}$ means) for the majority of the distributions considered ($j = 1, 2, 3, 4, 6, 7$ for $n = 256$ and $j = 1, 2, 4, 6, 7$ for $n = 1000$).
They also always did so in pair, with the same $\mathrm{ISE}$ mean up to the second decimal.
For the remaining cases, the boundary kernel estimator (BK) from \cite{MR3072469} had the lowest $\mathrm{ISE}$ means.
As expected, the ordinary kernel estimator and the empirical c.d.f.\ performed the worst.
In \cite{Mombeni_et_al_2019_accepted}, the authors reported that the empirical c.d.f.\ performed better than the BK estimator, but this has to be a programming error (especially since the bandwidth was optimized with a plug-in method).
Overall, our means and standard deviations in Table~\ref{table:1} seem to be lower than the ones reported in \cite{Mombeni_et_al_2019_accepted} at least in part because we used a more precise option (the \texttt{pracma::integral} function in \texttt{R}) to approximate the integrals involved in the bandwidth selection procedures and the computation of the $\mathrm{ISE}$'s.

\vspace{2mm}
In all cases, the asymmetric kernel estimators were at least competitive with the BK estimator in Table~\ref{table:1}.
Table~\ref{table:2}, which shows the difference between an estimator's $\mathrm{ISE}$ mean and the lowest $\mathrm{ISE}$ mean for the corresponding distribution and sample size, paints a nicer picture of the asymmetric kernel c.d.f.\ estimators' performance.
It shows that, for each sample size ($n = 256, 1000$), the total of those differences to the best $\mathrm{ISE}$ mean is significantly lower for the LN and B-S kernel estimators, compared to all the other alternatives.
Also, the totals for all the asymmetric kernel estimators are lower than for the BK estimator when $n = 256$ and are in the same range (or better in the case of LN and B-S) when $n = 1000$.
This means that all the asymmetric kernel estimators are overall better alternatives (or at least always remain competitive) compared to the BK estimator, although the advantage seems to dissipate (except for LN and B-S) when $n$ increases.

\section{Conclusion}\label{sec:conclusion}

In this paper, we considered five new asymmetric kernel c.d.f.\ estimators, namely the Gamma (Gam), inverse Gamma (IGam), lognormal (LN), inverse Gaussian (IGau) and reciprocal inverse Gaussian (RIG) kernel c.d.f.\ estimators.
We proved the asymptotic normality of these estimators and also found asymptotic expressions for their bias, variance, mean squared error and mean integrated squared error. The expressions for the optimal bandwidth under the mean integrated squared error was used in each case to implement a bandwidth selection procedure in our simulation study.
With the same experimental design as in \cite{Mombeni_et_al_2019_accepted} (but with an improved approximation of the integrals involved in the bandwidth selection procedures and the computation of the $\mathrm{ISE}$'s), our results show that the lognormal and Birnbaum-Saunders kernel c.d.f.\ estimators perform the best overall. The results also show that all seven asymmetric kernel c.d.f.\ estimator are better in some cases and at least always competitive against the boundary kernel alternative presented in \cite{MR3072469}.
In that sense, all seven asymmetric kernel c.d.f.\ estimators are safe to use in place of more traditional methods.
We recommend using the lognormal and Birnbaum-Saunders kernel c.d.f.\ estimators in the future.

\newpage
\appendix

\begin{appendices}

\section{Proof of the results for the Gam kernel}\label{sec:proof.results.G.kernel}

    \begin{proof}[Proof of Lemma~\ref{lem:bias.variance.G.kernel}]
        If $T$ denotes a random variable with the density
        \begin{equation}
            k_{\mathrm{Gam}}(t \nvert \alpha, \theta) = \frac{t^{\alpha - 1} e^{-t / \theta}}{\theta^{\alpha} \Gamma(\alpha)}, \quad \text{with } (\alpha, \theta) = (b^{-1} x + 1, b),
        \end{equation}
        then integration by parts yields
        \begin{align}
            \EE[\hat{F}_{n,b}^{\mathrm{Gam}}(x)] - F(x)
            &= \EE[F(T)] - F(x) \notag \\
            &= f(x) \cdot \EE[T - x] + \frac{1}{2} f'(x) \cdot \EE[(T - x)^2] + \oo_x\big(\EE[(T - x)^2]\big) \notag \\[-1mm]
            &= f(x) \cdot b + \frac{1}{2} f'(x) \cdot b (2 b + x) + \oo_x(b) \notag \\
            &= b \cdot (f(x) + \frac{x}{2} f'(x)) + \oo_x(b).
        \end{align}
        Now, we want to compute the expression for the variance.
        Let $S$ be a random variable with density $t\mapsto 2 k_{\mathrm{Gam}}(t \nvert b^{-1} x + 1, b) \overline{K}_{\mathrm{Gam}}(t \nvert b^{-1} x + 1, b)$  and note that $\min\{T_1,T_2\}$ has that particular distribution if $T_1,T_2\sim \mathrm{Gam}(b^{-1} x + 1, b)$ are independent.
        Then, integration by parts and Corollary~\ref{cor:tech.var.G} yield, for any given $x\in (0,\infty)$,
        \begin{align}
            \EE\big[\overline{K}_{\mathrm{Gam}}^2(X_1, b^{-1} x + 1, b)\big]
            &= \EE[F(S)] \notag \\[0.5mm]
            &= F(x) + f(x) \cdot \EE[S - x] + \OO_x\big(\EE[(S - x)^2]\big) \notag \\[-1mm]
            &= F(x) + f(x) \cdot \big[-\sqrt{\frac{b x}{\pi}} + \OO_x(b)\big] + \OO_x(b) \notag \\[-1mm]
            &= F(x) - b^{1/2} \cdot \frac{\sqrt{x} f(x)}{\sqrt{\pi}} + \OO_x(b).
        \end{align}
        so that
        \begin{align}
            \VV(\hat{F}_{n,b}^{\mathrm{Gam}}(x))
            &= n^{-1} \EE\big[\overline{K}_{\mathrm{Gam}}^2(X_1, b^{-1} x + 1, b)\big] - n^{-1} \big(\EE[\hat{F}_{n,b}^{\mathrm{Gam}}(x)]\big)^2 \notag \\
            &= n^{-1} F(x) (1 - F(x)) - n^{-1} b^{1/2} \cdot \frac{\sqrt{x} f(x)}{\sqrt{\pi}} + \OO_x(n^{-1} b).
        \end{align}
        This ends the proof.
    \end{proof}

    \begin{proof}[Proof of Proposition~\ref{prop:MISE.G.kernel}]
        Note that $\hat{F}_{n,b}^{\mathrm{Gam}}(x) - \EE[\hat{F}_{n,b}^{\mathrm{Gam}}(x)] = \frac{1}{n} \sum_{i=1}^n Z_{i,b}$ where
        \begin{equation}
            Z_{i,b} \leqdef \overline{K}_{\mathrm{Gam}}(X_i \nvert b^{-1} x + 1, b) - \EE[\overline{K}_{\mathrm{Gam}}(X_i \nvert b^{-1} x + 1, b)], \quad 1 \leq i \leq n,
        \end{equation}
        are i.i.d.\ and centered random variables.
        It suffices to show the following Lindeberg condition for double arrays (see, e.g., Section 1.9.3 in \cite{MR0595165}):
        For every $\varepsilon > 0$,
        \begin{equation}
            s_b^{-2} \, \EE\big[Z_{1,b}^2 \ind_{\{|Z_{1,b}| > \varepsilon  n^{1/2} s_b\}}\big] \longrightarrow 0, \quad \text{as } n\to \infty,
        \end{equation}
        where $s_b^2 \leqdef \EE[Z_{1,b}^2]$ and $b = b(n)\to 0$.
        But this follows from the fact that $|Z_{1,b}| \leq 2$ for all $b > 0$, and $s_b = (n\hspace{0.3mm} \VV(\hat{F}_{n,b}^{\mathrm{Gam}}))^{1/2}\to F(x) (1 - F(x))$ as $n\to \infty$ by Lemma~\ref{lem:bias.variance.G.kernel}.
    \end{proof}

\section{Proof of the results for the IGam kernel}\label{sec:proof.results.IGam.kernel}

    \begin{proof}[Proof of Lemma~\ref{lem:bias.variance.IGam.kernel}]
        If $T$ denotes a random variable with the density
        \begin{equation}
            k_{\mathrm{IGam}}(t \nvert \alpha, \theta) = \frac{t^{-\alpha - 1} e^{-1 / (t \theta)}}{\theta^{\alpha} \Gamma(\alpha)}, \quad \text{with } (\alpha, \theta) = (b^{-1} + 1, x^{-1} b),
        \end{equation}
        then integration by parts yields (assuming $0 < b < 1/2$)
        \begin{align}
            \EE[\hat{F}_{n,b}^{\mathrm{IGam}}(x)] - F(x)
            &= \EE[F(T)] - F(x) \notag \\[0.5mm]
            &= f(x) \cdot \EE[T - x] + \frac{1}{2} f'(x) \cdot \EE[(T - x)^2] + \oo_x\big(\EE[(T - x)^2]\big) \notag \\[-1mm]
            &= f(x) \cdot 0 + \frac{1}{2} f'(x) \cdot \frac{b x^2}{1 - b} + \oo_x(b) \notag \\[-2mm]
            &= b \cdot \frac{x^2}{2} f'(x) + \oo_x(b).
        \end{align}
        Now, we want to compute the expression for the variance.
        Let $S$ be a random variable with density $t\mapsto 2 k_{\mathrm{IGam}}(t \nvert b^{-1} + 1, x^{-1} b) \overline{K}_{\mathrm{IGam}}(t \nvert b^{-1} + 1, x^{-1} b)$  and note that $\min\{T_1,T_2\}$ has that particular distribution if $T_1,T_2\sim \mathrm{IGam}(b^{-1} + 1, x^{-1} b)$ are independent.
        Then, integration by parts and Corollary~\ref{cor:tech.var.IGam}, for any given $x\in (0,\infty)$,
        \begin{align}
            \EE\big[\overline{K}_{\mathrm{IGam}}^2(X_1, b^{-1} + 1, x^{-1} b)\big]
            &= \EE[F(S)] \notag \\[0.5mm]
            &= F(x) + f(x) \cdot \EE[S - x] + \OO_x\big(\EE[(S - x)^2]\big) \notag \\[-1mm]
            &= F(x) + f(x) \cdot \big[-x \sqrt{\frac{b}{\pi}} + \OO_x(b)\big] + \OO_x(b) \notag \\[-1.5mm]
            &= F(x) - b^{1/2} \cdot \frac{x f(x)}{\sqrt{\pi}} + \OO_x(b).
        \end{align}
        so that
        \begin{align}
            \VV(\hat{F}_{n,b}^{\mathrm{IGam}}(x))
            &= n^{-1} \EE\big[\overline{K}_{\mathrm{IGam}}^2(X_1, b^{-1} + 1, x^{-1} b)\big] - n^{-1} \big(\EE[\hat{F}_{n,b}^{\mathrm{IGam}}(x)]\big)^2 \notag \\
            &= n^{-1} F(x) (1 - F(x)) - n^{-1} b^{1/2} \cdot \frac{x f(x)}{\sqrt{\pi}} + \OO_x(n^{-1} b).
        \end{align}
        This ends the proof.
    \end{proof}

    \begin{proof}[Proof of Proposition~\ref{prop:MISE.IGam.kernel}]
        Note that $\hat{F}_{n,b}^{\mathrm{IGam}}(x) - \EE[\hat{F}_{n,b}^{\mathrm{IGam}}(x)] = \frac{1}{n} \sum_{i=1}^n Z_{i,b}$ where
        \begin{equation}
            Z_{i,b} \leqdef \overline{K}_{\mathrm{IGam}}(X_i \nvert b^{-1} + 1, x^{-1} b) - \EE[\overline{K}_{\mathrm{IGam}}(X_i \nvert b^{-1} + 1, x^{-1} b)], \quad 1 \leq i \leq n,
        \end{equation}
        are i.i.d.\ and centered random variables.
        It suffices to show the following Lindeberg condition for double arrays (see, e.g., Section 1.9.3 in \cite{MR0595165}):
        For every $\varepsilon > 0$,
        \begin{equation}
            s_b^{-2} \, \EE\big[Z_{1,b}^2 \ind_{\{|Z_{1,b}| > \varepsilon  n^{1/2} s_b\}}\big] \longrightarrow 0, \quad \text{as } n\to \infty,
        \end{equation}
        where $s_b^2 \leqdef \EE[Z_{1,b}^2]$ and $b = b(n)\to 0$.
        But this follows from the fact that $|Z_{1,b}| \leq 2$ for all $b > 0$, and $s_b = (n\hspace{0.3mm} \VV(\hat{F}_{n,b}^{\mathrm{IGam}}))^{1/2}\to F(x) (1 - F(x))$ as $n\to \infty$ by Lemma~\ref{lem:bias.variance.IGam.kernel}.
    \end{proof}

\section{Proof of the results for the LN kernel}\label{sec:proof.results.LN.kernel}

    \begin{proof}[Proof of Lemma~\ref{lem:bias.variance.LN.kernel}]
        If $T$ denotes a random variable with the density
        \begin{equation}
            k_{\mathrm{LN}}(t \nvert \mu, \sigma) = \frac{1}{t \sqrt{2\pi \sigma^2}} \exp\bigg(-\frac{(\log t - \mu)^2}{2 \sigma^2}\bigg), \quad \text{with } (\mu, \sigma) = (\log x, \sqrt{b}),
        \end{equation}
        then integration by parts yields
        \begin{align}
            \EE[\hat{F}_{n,b}^{\mathrm{LN}}(x)] - F(x)
            &= \EE[F(T)] - F(x) \notag \\
            &= f(x) \cdot \EE[T - x] + \frac{1}{2} f'(x) \cdot \EE[(T - x)^2] + \oo_x\big(\EE[(T - x)^2]\big) \notag \\[-1mm]
            &= f(x) \cdot x (e^{b/2} - 1) + \frac{1}{2} f'(x) \cdot x^2 (e^{2b} - 2 e^{b/2} + 1) + \oo_x(b) \notag \\
            &= b \cdot \frac{x}{2} (f(x) + x f'(x)) + \oo_x(b).
        \end{align}
        Now, we want to compute the expression for the variance.
        Let $S$ be a random variable with density $t\mapsto 2 k_{\mathrm{LN}}(t \nvert \log x, \sqrt{b}) \overline{K}_{\mathrm{LN}}(t \nvert \log x, \sqrt{b})$  and note that $\min\{T_1,T_2\}$ has that particular distribution if $T_1,T_2\sim \mathrm{LN}(\log x, \sqrt{b})$ are independent.
        Then, integration by parts and Corollary~\ref{cor:tech.var.LN} yield, for any given $x\in (0,\infty)$,
        \begin{align}
            \EE\big[\overline{K}_{\mathrm{LN}}^2(X_1, \log x, \sqrt{b})\big]
            &= \EE[F(S)] \notag \\[0.5mm]
            &= F(x) + f(x) \cdot \EE[S - x] + \OO_x\big(\EE[(S - x)^2]\big) \notag \\[0mm]
            &= F(x) + f(x) \cdot \big[b^{1/2} \cdot \frac{-x}{\sqrt{\pi}} + \OO_x(b)\big] + \OO_x(b) \notag \\[-1.5mm]
            &= F(x) - b^{1/2} \cdot \frac{x f(x)}{\sqrt{\pi}} + \OO_x(b).
        \end{align}
        so that
        \begin{align}
            \VV(\hat{F}_{n,b}^{\mathrm{LN}}(x))
            &= n^{-1} \EE\big[\overline{K}_{\mathrm{LN}}^2(X_1, \log x, \sqrt{b})\big] - n^{-1} \big(\EE[\hat{F}_{n,b}^{\mathrm{LN}}(x)]\big)^2 \notag \\
            &= n^{-1} F(x) (1 - F(x)) - n^{-1} b^{1/2} \cdot \frac{x f(x)}{\sqrt{\pi}} + \OO_x(n^{-1} b).
        \end{align}
        This ends the proof.
    \end{proof}

    \begin{proof}[Proof of Proposition~\ref{prop:MISE.LN.kernel}]
        Note that $\hat{F}_{n,b}^{\mathrm{LN}}(x) - \EE[\hat{F}_{n,b}^{\mathrm{LN}}(x)] = \frac{1}{n} \sum_{i=1}^n Z_{i,b}$ where
        \begin{equation}
            Z_{i,b} \leqdef \overline{K}_{\mathrm{LN}}(X_i \nvert \log x, \sqrt{b}) - \EE[\overline{K}_{\mathrm{LN}}(X_i \nvert \log x, \sqrt{b})], \quad 1 \leq i \leq n,
        \end{equation}
        are i.i.d.\ and centered random variables.
        It suffices to show the following Lindeberg condition for double arrays (see, e.g., Section 1.9.3 in \cite{MR0595165}):
        For every $\varepsilon > 0$,
        \begin{equation}
            s_b^{-2} \, \EE\big[Z_{1,b}^2 \ind_{\{|Z_{1,b}| > \varepsilon  n^{1/2} s_b\}}\big] \longrightarrow 0, \quad \text{as } n\to \infty,
        \end{equation}
        where $s_b^2 \leqdef \EE[Z_{1,b}^2]$ and $b = b(n)\to 0$.
        But this follows from the fact that $|Z_{1,b}| \leq 2$ for all $b > 0$, and $s_b = (n\hspace{0.3mm} \VV(\hat{F}_{n,b}^{\mathrm{LN}}))^{1/2}\to F(x) (1 - F(x))$ as $n\to \infty$ by Lemma~\ref{lem:bias.variance.LN.kernel}.
    \end{proof}

\section{Proof of the results for the IGau kernel}\label{sec:proof.results.IGau.kernel}

    \begin{proof}[Proof of Lemma~\ref{lem:bias.variance.IGau.kernel}]
        If $T$ denotes a random variable with the density
        \begin{equation}
            k_{\mathrm{IGau}}(t \nvert \mu, \lambda) = \sqrt{\frac{\lambda}{2\pi t^3}} \exp\bigg(- \lambda \frac{(t - \mu)^2}{2 \mu^2 t}\bigg), \quad \text{with } (\mu,\lambda) = (x, b^{-1} x),
        \end{equation}
        then integration by parts yields
        \begin{align}
            \EE[\hat{F}_{n,b}^{\mathrm{IGau}}(x)] - F(x)
            &= \EE[F(T)] - F(x) \notag \\
            &= f(x) \cdot \EE[T - x] + \frac{1}{2} f'(x) \cdot \EE[(T - x)^2] + \oo_x\big(\EE[(T - x)^2]\big) \notag \\
            &= f(x) \cdot 0 + \frac{1}{2} f'(x) \cdot x^2 b + \oo_x(b) \notag \\[-0.8mm]
            &= b \cdot \frac{x^2}{2} f'(x) + \oo_x(b).
        \end{align}
        Now, we want to compute the expression for the variance.
        Let $S$ be a random variable with density $t\mapsto 2 k_{\mathrm{IGau}}(t \nvert x, b^{-1} x) \overline{K}_{\mathrm{IGau}}(t \nvert x, b^{-1} x)$ and note that $\min\{T_1,T_2\}$, which can also be written as $\frac{1}{2} (T_1 + T_2) - \frac{1}{2} |T_1 - T_2|$, has that particular distribution if $T_1,T_2\sim \mathrm{IGau}(x, b^{-1} x)$ are independent.
        Then, integration by parts together with the fact that $\EE[T_1] = \EE[T_2] = x$ yield, for any given $x\in (0,\infty)$,
        \begin{align}
            \EE\big[\overline{K}_{\mathrm{IGau}}^2(X_1, x, b^{-1} x)\big]
            &= \EE[F(S)] \notag \\[1mm]
            &= F(x) + f(x) \cdot \EE[S - x] + \OO_x\big(\EE[(S - x)^2]\big) \notag \\
            &= F(x) + f(x) \cdot \Big\{-\frac{1}{2} \EE[|T_1 - T_2|]\Big\} + \OO_x(b) \notag \\
            &= F(x) - b^{1/2} \cdot \frac{f(x)}{2} \Big[\lim_{b\to 0} b^{-1/2} \, \EE[|T_1 - T_2|]\Big] + \OO_x(b).
        \end{align}
        so that
        \begin{align}
            \VV(\hat{F}_{n,b}^{\mathrm{IGau}}(x))
            &= n^{-1} \EE\big[\overline{K}_{\mathrm{IGau}}^2(X_1, x, b^{-1} x)\big] - n^{-1} \big(\EE[\hat{F}_{n,b}^{\mathrm{IGau}}(x)]\big)^2 \notag \\[1.5mm]
            &= n^{-1} F(x) (1 - F(x)) \notag \\
            &\quad- n^{-1} b^{1/2} \cdot \frac{f(x)}{2} \Big[\lim_{b\to 0} b^{-1/2} \, \EE[|T_1 - T_2|]\Big] + \OO_x(n^{-1} b).
        \end{align}
        This ends the proof.
    \end{proof}

    \begin{proof}[Proof of Proposition~\ref{prop:MISE.IGau.kernel}]
        Note that $\hat{F}_{n,b}^{\mathrm{IGau}}(x) - \EE[\hat{F}_{n,b}^{\mathrm{IGau}}(x)] = \frac{1}{n} \sum_{i=1}^n Z_{i,b}$ where
        \begin{equation}
            Z_{i,b} \leqdef \overline{K}_{\mathrm{IGau}}(X_i \nvert x, b^{-1} x) - \EE[\overline{K}_{\mathrm{IGau}}(X_i \nvert x, b^{-1} x)], \quad 1 \leq i \leq n,
        \end{equation}
        are i.i.d.\ and centered random variables.
        It suffices to show the following Lindeberg condition for double arrays (see, e.g., Section 1.9.3 in \cite{MR0595165}):
        For every $\varepsilon > 0$,
        \begin{equation}
            s_b^{-2} \, \EE\big[Z_{1,b}^2 \ind_{\{|Z_{1,b}| > \varepsilon  n^{1/2} s_b\}}\big] \longrightarrow 0, \quad \text{as } n\to \infty,
        \end{equation}
        where $s_b^2 \leqdef \EE[Z_{1,b}^2]$ and $b = b(n)\to 0$.
        But this follows from the fact that $|Z_{1,b}| \leq 2$ for all $b > 0$, and $s_b = (n\hspace{0.3mm} \VV(\hat{F}_{n,b}^{\mathrm{IGau}}))^{1/2}\to F(x) (1 - F(x))$ as $n\to \infty$ by Lemma~\ref{lem:bias.variance.IGau.kernel}.
    \end{proof}

\section{Proof of the results for the RIG kernel}\label{sec:proof.results.RIG.kernel}

    \begin{proof}[Proof of Lemma~\ref{lem:bias.variance.RIG.kernel}]
        If $T$ denotes a random variable with the density
        \begin{equation}
            k_{\mathrm{RIG}}(t \nvert \mu, \lambda) = \sqrt{\frac{\lambda}{2\pi t}} \exp\bigg(- \lambda \frac{(1 - \mu t)^2}{2 \mu^2 t}\bigg), \quad \text{with } (\mu,\lambda) = (x^{-1} (1 - b)^{-1}, x^{-1} b^{-1}),
        \end{equation}
        then integration by parts yields
        \begin{align}
            \EE[\hat{F}_{n,b}^{\mathrm{RIG}}(x)] - F(x)
            &= \EE[F(T)] - F(x) \notag \\
            &= f(x) \cdot \EE[T - x] + \frac{1}{2} f'(x) \cdot \EE[(T - x)^2] + \oo_x\big(\EE[(T - x)^2]\big) \notag \\
            &= f(x) \cdot 0 + \frac{1}{2} f'(x) \cdot x^2 b (1 + b) + \oo_x(b) \notag \\[-0.5mm]
            &= b \cdot \frac{1}{2} x^2 f'(x) + \oo_x(b).
        \end{align}
        Now, we want to compute the expression for the variance.
        Let $S$ be a random variable with density $t\mapsto 2 k_{\mathrm{RIG}}(t \nvert x^{-1} (1 - b)^{-1}, x^{-1} b^{-1}) \overline{K}_{\mathrm{RIG}}(t \nvert x^{-1} (1 - b)^{-1}, x^{-1} b^{-1})$ and note that $\min\{T_1,T_2\}$, which can also be written as $\frac{1}{2} (T_1 + T_2) - \frac{1}{2} |T_1 - T_2|$, has that particular distribution if $T_1,T_2\sim \mathrm{RIG}(x^{-1} (1 - b)^{-1}, x^{-1} b^{-1})$ are independent.
        Then, integration by parts together with the fact that $\EE[T_1] = \EE[T_2] = x$ yield, for any given $x\in (0,\infty)$,
        \begin{align}
            &\EE\big[\overline{K}_{\mathrm{RIG}}^2(X_1, x^{-1} (1 - b)^{-1}, x^{-1} b^{-1})\big] = \EE[F(S)] \notag \\
            &\qquad= F(x) + f(x) \cdot \EE[S - x] + \OO_x\big(\EE[(S - x)^2]\big) \notag \\
            &\qquad= F(x) + f(x) \cdot \Big\{- \frac{1}{2} \EE[|T_1 - T_2|]\Big\} + \OO_x(b) \notag \\[-1mm]
            &\qquad= F(x) - b^{1/2} \cdot \frac{f(x)}{2} \Big[\lim_{b\to 0} b^{-1/2} \, \EE[|T_1 - T_2|]\Big] + \OO_x(b).
        \end{align}
        so that
        \begin{align}
            \VV(\hat{F}_{n,b}^{\mathrm{RIG}}(x))
            &= n^{-1} \EE\big[\overline{K}_{\mathrm{RIG}}^2(X_1, x^{-1} (1 - b)^{-1}, x^{-1} b^{-1})\big] - n^{-1} \big(\EE[\hat{F}_{n,b}^{\mathrm{RIG}}(x)]\big)^2 \notag \\[1.5mm]
            &= n^{-1} F(x) (1 - F(x)) \notag \\[0mm]
            &\quad- n^{-1} b^{1/2} \cdot \frac{f(x)}{2} \Big[\lim_{b\to 0} b^{-1/2} \, \EE[|T_1 - T_2|]\Big] + \OO_x(n^{-1} b).
        \end{align}
        This ends the proof.
    \end{proof}

    \begin{proof}[Proof of Proposition~\ref{prop:MISE.RIG.kernel}]
        Note that $\hat{F}_{n,b}^{\mathrm{RIG}}(x) - \EE[\hat{F}_{n,b}^{\mathrm{RIG}}(x)] = \frac{1}{n} \sum_{i=1}^n Z_{i,b}$ where
        \begin{equation}
            \begin{aligned}
                Z_{i,b}
                &\leqdef \overline{K}_{\mathrm{RIG}}(X_i \nvert x^{-1} (1 - b)^{-1}, x^{-1} b^{-1}) \\[1mm]
                &\quad- \EE[\overline{K}_{\mathrm{RIG}}(X_i \nvert x^{-1} (1 - b)^{-1}, x^{-1} b^{-1})], \quad 1 \leq i \leq n,
            \end{aligned}
        \end{equation}
        are i.i.d.\ and centered random variables.
        It suffices to show the following Lindeberg condition for double arrays (see, e.g., Section 1.9.3 in \cite{MR0595165}):
        For every $\varepsilon > 0$,
        \begin{equation}
            s_b^{-2} \, \EE\big[Z_{1,b}^2 \ind_{\{|Z_{1,b}| > \varepsilon  n^{1/2} s_b\}}\big] \longrightarrow 0, \quad \text{as } n\to \infty,
        \end{equation}
        where $s_b^2 \leqdef \EE[Z_{1,b}^2]$ and $b = b(n)\to 0$.
        But this follows from the fact that $|Z_{1,b}| \leq 2$ for all $b > 0$, and $s_b = (n\hspace{0.3mm} \VV(\hat{F}_{n,b}^{\mathrm{RIG}}))^{1/2}\to F(x) (1 - F(x))$ as $n\to \infty$ by Lemma~\ref{lem:bias.variance.RIG.kernel}.
    \end{proof}

\section{Technical lemmas}\label{sec:tech.lemmas}

    The lemma below computes the first two moments for the minimum of two i.i.d.\ random variables with a Gamma distribution.
    The proof is a slight generalization of the answer provided by Felix Marin in the following \href{https://math.stackexchange.com/questions/3910094/how-to-compute-this-double-integral-involving-the-gamma-function}{MathStackExchange post}.%
    \footnote{\tiny \url{https://math.stackexchange.com/questions/3910094/how-to-compute-this-double-integral-involving-the-gamma-function}}

    \begin{lemma}\label{lem:tech.var.G.prelim}
        Let $X, Y\stackrel{\mathrm{i.i.d.}}{\sim} \mathrm{Gamma}\hspace{0.2mm}(\alpha, \theta)$, then
        \begin{equation}
            \EE[(\min\{X,Y\})^j] = \frac{\theta^j \Gamma(\alpha + j)}{\Gamma(\alpha)} - \frac{j \theta^j}{\sqrt{\pi}} \cdot \frac{\Gamma(\alpha + j - 1/2)}{\Gamma(\alpha)}, \quad j\in \{1,2\},
        \end{equation}
        where $\Gamma(\alpha) \leqdef \int_0^{\infty} t^{\alpha - 1} e^{-t} \rd t$ denotes the gamma function.
        In particular, for all $x\in \R$,
        \begin{align}
            \EE[(\min\{X,Y\} - x)]
            &= \theta \alpha - \frac{\theta}{\sqrt{\pi}} \cdot \frac{\Gamma(\alpha + 1/2)}{\Gamma(\alpha)} - x, \\
            \EE[(\min\{X,Y\} - x)^2]
            &= \theta^2 \alpha (\alpha + 1) - \frac{2 \theta^2}{\sqrt{\pi}} \cdot \frac{\Gamma(\alpha + 3/2)}{\Gamma(\alpha)} \notag \\
            &\quad- 2 x \left[\theta \alpha - \frac{\theta}{\sqrt{\pi}} \cdot \frac{\Gamma(\alpha + 1/2)}{\Gamma(\alpha)}\right] + x^2.
        \end{align}
    \end{lemma}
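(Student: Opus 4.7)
The plan is to establish the moment formula for $\EE[(\min\{X,Y\})^j]$ with $j\in\{1,2\}$; the two ``centered'' identities then follow at once by expanding $(\min\{X,Y\}-x)$ and $(\min\{X,Y\}-x)^2$ and using linearity of expectation. By the scaling identity $\min\{\theta X',\theta Y'\}=\theta\min\{X',Y'\}$, I can reduce to the case $\theta=1$. Conditioning on which of $X,Y$ is smaller and invoking symmetry gives
\[
\EE[(\min\{X,Y\})^j] \;=\; 2\int_0^\infty x^j f(x)\bar F(x)\,\rd x \;=\; \frac{2}{\Gamma(\alpha)^2}\int_0^\infty x^{\alpha+j-1}e^{-x}\,\Gamma(\alpha,x)\,\rd x,
\]
where $f$ and $\bar F$ denote the density and survival function of $\mathrm{Gamma}(\alpha,1)$.

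The first step would be to expand $\Gamma(\alpha,x)=\int_x^\infty t^{\alpha-1}e^{-t}\,\rd t$, apply Fubini on the region $\{0\le x\le t\}$, and then change variables $s=x+t$, $r=t/s$ (so $r\in[1/2,1]$). The $s$-integral produces $\Gamma(2\alpha+j)$ while the $r$-integral decouples; a reflection $r\mapsto 1-r$ turns it into an incomplete Beta integral, yielding
\[
\int_0^\infty x^{\alpha+j-1}e^{-x}\,\Gamma(\alpha,x)\,\rd x \;=\; \Gamma(2\alpha+j)\cdot B\!\bigl(\tfrac12;\,\alpha+j,\alpha\bigr),
\]
where $B(y;a,b)\leqdef \int_0^y v^{a-1}(1-v)^{b-1}\,\rd v$ denotes the incomplete Beta function.

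The main obstacle is evaluating $B(1/2;\alpha+j,\alpha)$ in closed form. For $j=1$, I would combine the integration by parts with $u=v^\alpha$, $\rd w=(1-v)^{\alpha-1}\,\rd v$ with the reflection identity $B(1/2;a,b)+B(1/2;b,a)=B(a,b)$ (coming from the substitution $v\mapsto 1-v$); these two relations form a linear system whose solution is $B(1/2;\alpha+1,\alpha)=\tfrac12 B(\alpha+1,\alpha)-(2\alpha\cdot 4^\alpha)^{-1}$. For $j=2$, a single integration by parts reduces the problem to $B(1/2;\alpha+1,\alpha+1)$, whose integrand is symmetric about $v=1/2$, so it equals $\tfrac12 B(\alpha+1,\alpha+1)$; an analogous closed form results.

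Finally, Legendre's duplication formula $\Gamma(2\alpha)=2^{2\alpha-1}\pi^{-1/2}\,\Gamma(\alpha)\Gamma(\alpha+\tfrac12)$ converts the surviving factor $\Gamma(2\alpha+j)/4^\alpha$ into $\Gamma(\alpha+j-\tfrac12)/\sqrt\pi$, up to cancellations with $\Gamma(\alpha)$ and elementary factors; this reproduces exactly the stated expression after multiplying by $\theta^j$ to undo the initial rescaling. The two ``centered'' identities are then immediate consequences of the expansion $(\min\{X,Y\}-x)^k = \sum_{i=0}^k \binom{k}{i}(-x)^{k-i}(\min\{X,Y\})^i$.
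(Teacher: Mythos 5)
Your proposal is correct, and every step checks out (I verified the change of variables $s=x+t$, $r=t/s$ with Jacobian $s$, the resulting factorization $\Gamma(2\alpha+j)\,B(1/2;\alpha+j,\alpha)$, the two incomplete-Beta evaluations, and the final reduction via Legendre duplication, which reproduces the stated formulas for $j=1,2$). However, it takes a genuinely different route from the paper. The paper also starts from the density $2f\bar F$ of the minimum and rescales to $\theta=1$, but then inserts the Fourier-type integral representation of the Heaviside function $\ind_{[0,\infty)}$, splits the resulting $\tau$-integral into a principal value plus a Dirac-delta contribution (Sokhotski--Plemelj), evaluates the principal value explicitly for $j\in\{1,2\}$, and finishes the remaining integral $\int_0^\infty t^{-1/2}(1+t)^{-\alpha-j}\,\rd t$ with Ramanujan's master theorem. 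Your argument is entirely real-variable and elementary: Fubini on $\{0\le x\le t\}$, a decoupling change of variables, integration by parts combined with the reflection identity $B(1/2;a,b)+B(1/2;b,a)=B(a,b)$ (and the symmetry of $v^\alpha(1-v)^\alpha$ about $v=1/2$ for $j=2$), and the duplication formula. This avoids the distributional limit $\e\to 0^+$ and the master theorem, both of which require some care to justify rigorously, and it iterates straightforwardly to higher integer moments; the paper's complex-analytic route is more compact on the page but its principal-value step is likewise tied to $j\in\{1,2\}$. The two centered identities are, in both treatments, immediate from expanding $(\min\{X,Y\}-x)^k$ and linearity.
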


    \begin{proof}
        Assume throughout the proof that $j\in \{1,2\}$.
        By the simple change of variables $(u,v) = (x/\theta,y/\theta)$, we have
        \begin{align}\label{eq:complex.integral.Gamma.kernel.eq.1}
            \EE[(\min\{X,Y\})^j]
            &= 2 \int_0^{\infty} \int_0^{\infty} y^j \frac{(xy)^{\alpha - 1} e^{-(x+y)/\theta}}{\theta^{2\alpha} \Gamma^2(\alpha)} \, \ind_{[0,\infty)}(x - y) \rd x \rd y \notag \\
            &= \frac{2 \theta^j}{\Gamma^2(\alpha)} \int_0^{\infty} \int_0^{\infty} u^{\alpha - 1} e^{-u} v^{\alpha + j - 1} e^{-v} \, \ind_{[0,\infty)}(u - v) \rd u \rd v.
        \end{align}
        By the integral representation of the Heaviside function
        \begin{equation}
            \ind_{[0,\infty)}(x) = \lim_{\e\to 0^+} \frac{1}{2\pi \ii} \int_{-\infty}^{\infty} \frac{1}{\tau - \ii \e} e^{\ii x \tau} \rd \tau,
        \end{equation}
        the above is
        \begin{align}\label{eq:complex.integral.Gamma.kernel.eq.2}
            &= \frac{2 \theta^j}{\Gamma^2(\alpha)} \lim_{\e\to 0^+} \frac{1}{2\pi \ii} \int_{-\infty}^{\infty} \frac{1}{\tau - \ii \e} \underbrace{\int_0^{\infty} u^{\alpha - 1} e^{-(1 - \ii \tau) u} \rd u}_{=~(1 - \ii \tau)^{-\alpha} \Gamma(\alpha)} \underbrace{\int_0^{\infty} v^{\alpha + j - 1} e^{-(1 + \ii \tau) v} \rd v}_{=~(1 + \ii \tau)^{-\alpha - j} \Gamma(\alpha + j)} \rd \tau \notag \\
            &= \frac{2 \theta^j \Gamma(\alpha + j)}{\Gamma(\alpha)} \lim_{\e\to 0^+} \frac{1}{2\pi \ii} \int_{-\infty}^{\infty} \frac{(1 + \tau^2)^{-\alpha} (1 + \ii \tau)^{- j}}{\tau - \ii \e} \rd \tau \notag \\
            &= \frac{2 \theta^j \Gamma(\alpha + j)}{\Gamma(\alpha)} \left\{\hspace{-1mm}
                \begin{array}{l}
                    \mathrm{P.V.} \, \frac{1}{2 \pi \ii} \int_{-\infty}^{\infty} \frac{(1 + \tau^2)^{-\alpha} (1 + \ii \tau)^{- j}}{\tau} \rd \tau \\[1mm]
                    + \frac{1}{2 \pi \ii} \int_{-\infty}^{\infty} (1 + \tau^2)^{-\alpha} (1 + \ii \tau)^{- j} \cdot \ii \pi \delta(\tau) \rd \tau
                \end{array}
                \hspace{-1mm}\right\},
        \end{align}
        where $\delta$ denotes the Dirac delta function.
        The second term in the last brace is $1/2$ and the principal value is
        \begin{align}
            &=\frac{1}{2 \pi \ii} \int_0^{\infty} \frac{(1 + \tau^2)^{-\alpha}}{\tau} \left[\frac{1}{(1 + \ii \tau)^j} - \frac{1}{(1 - \ii \tau)^j}\right] \rd \tau \notag \\
            &= - \frac{j}{\pi} \int_0^{\infty} (1 + \tau^2)^{-\alpha-j} \rd \tau,
        \end{align}
        where we crucially used the fact that $j\in \{1,2\}$ to obtain the last equality.
        Putting all the work back in \eqref{eq:complex.integral.Gamma.kernel.eq.2}, we get
        \begin{align}\label{eq:complex.integral.Gamma.kernel.eq.4}
            \EE[(\min\{X,Y\})^j]
            &= \frac{2 \theta^j \Gamma(\alpha + j)}{\Gamma(\alpha)} \left\{- \frac{j}{\pi} \int_0^{\infty} (1 + \tau^2)^{-\alpha-j} \rd \tau + \frac{1}{2}\right\} \notag \\
            &= \frac{2 \theta^j \Gamma(\alpha + j)}{\Gamma(\alpha)} \left\{- \frac{j}{2\pi} \int_0^{\infty} t^{1/2 - 1} (1 + t)^{-\alpha-j} \rd t + \frac{1}{2}\right\}.
        \end{align}
        The remaining integral can be evaluated using Ramanujan's master theorem.
        Indeed, note that
        \begin{align}
            (1 + t)^{-\alpha-j}
            &= \sum_{k=0}^{\infty} \binom{-\alpha - j}{k} t^k = \sum_{k=0}^{\infty} \binom{\alpha + j + k - 1}{k} (-t)^k \notag \\
            &= \sum_{k=0}^{\infty} \varphi(k) \frac{(-t)^k}{k!}, \quad \text{with } \varphi(z) \leqdef \frac{\Gamma(\alpha + j + z)}{\Gamma(\alpha + j)}.
        \end{align}
        Therefore,
        \begin{equation}
            \int_0^{\infty} t^{1/2 - 1} (1 + t)^{-\alpha-j} \rd t = \Gamma(1/2) \varphi(-1/2) = \frac{\sqrt{\pi} \, \Gamma(\alpha + j - 1/2)}{\Gamma(\alpha + j)}.
        \end{equation}
        By putting this result in \eqref{eq:complex.integral.Gamma.kernel.eq.4}, we obtain
        \begin{equation}
            \EE[(\min\{X,Y\})^j] = \frac{\theta^j \Gamma(\alpha + j)}{\Gamma(\alpha)} - \frac{j \theta^j}{\sqrt{\pi}} \cdot \frac{\Gamma(\alpha + j - 1/2)}{\Gamma(\alpha)}.
        \end{equation}
        This ends the proof.
    \end{proof}

    \begin{corollary}\label{cor:tech.var.G}
        Let $X, Y\stackrel{\mathrm{i.i.d.}}{\sim} \mathrm{Gamma}\hspace{0.2mm}(b^{-1} x + 1, b)$ for some $x,b\in (0,\infty)$, then
        \begin{align}
            \EE[(\min\{X,Y\} - x)]
            &= b \Big(\frac{x}{b} + 1\Big) - \frac{b}{\sqrt{\pi}} \cdot \frac{\Gamma(\frac{x}{b} + 3/2)}{\Gamma(\frac{x}{b} + 1)} - x \notag \\[-1mm]
            &= -\sqrt{\frac{b x}{\pi}} + b + \OO_x(b^{3/2}), \\[1mm]
            \EE[(\min\{X,Y\} - x)^2]
            &= b^2 \Big(\frac{x}{b} + 1\Big) \Big(\frac{x}{b} + 2\Big) - \frac{2 b^2}{\sqrt{\pi}} \cdot \frac{\Gamma(\frac{x}{b} + 5/2)}{\Gamma(\frac{x}{b} + 1)} \notag \\[-1mm]
            &\quad- 2 x \bigg[x - \sqrt{\frac{b x}{\pi}} + b + \OO_x(b^{3/2})\bigg] + x^2 \notag \\
            &= b x + \OO_x(b^{3/2}).
        \end{align}
    \end{corollary}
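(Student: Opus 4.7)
The proof is a direct specialization of Lemma~\ref{lem:tech.var.G.prelim}, followed by an asymptotic expansion of the resulting Gamma ratios. My plan is as follows.

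First, I would substitute $\alpha = b^{-1} x + 1$ and $\theta = b$ into the two identities from Lemma~\ref{lem:tech.var.G.prelim}. This immediately produces the first (exact) equality in each of the two displayed equations of the corollary, since $\theta\alpha = x + b$, $\theta^2 \alpha(\alpha+1) = (x+b)(x+2b)$, and the Gamma ratios specialize to $\Gamma(\frac{x}{b} + 3/2)/\Gamma(\frac{x}{b}+1)$ and $\Gamma(\frac{x}{b} + 5/2)/\Gamma(\frac{x}{b}+1)$.

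The substantive step is the asymptotic analysis. Setting $z = x/b \to \infty$ as $b \to 0$, I would use the standard Stirling-type expansion
\begin{equation*}
    \frac{\Gamma(z + a)}{\Gamma(z + c)} = z^{a-c}\left[1 + \frac{(a-c)(a+c-1)}{2z} + \OO(z^{-2})\right],
\end{equation*}
which yields $\Gamma(\frac{x}{b}+3/2)/\Gamma(\frac{x}{b}+1) = \sqrt{x/b}\,[1 + \OO_x(b)]$ and $\Gamma(\frac{x}{b}+5/2)/\Gamma(\frac{x}{b}+1) = (x/b)^{3/2}[1 + \OO_x(b)]$. Multiplying by the prefactors $b/\sqrt{\pi}$ and $2b^2/\sqrt{\pi}$ respectively gives $\sqrt{bx/\pi} + \OO_x(b^{3/2})$ and $2 x^{3/2}\sqrt{b}/\sqrt{\pi} + \OO_x(b^{3/2})$. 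Plugging the first of these into the formula for $\EE[\min\{X,Y\} - x]$ immediately produces $-\sqrt{bx/\pi} + b + \OO_x(b^{3/2})$.

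For the second moment, the plan is to simply expand every piece and collect terms. The leading polynomial part contributes $(x^2 + 3bx + 2b^2) - 2x(x + b) + x^2 = bx + 2b^2$, and the two contributions carrying a $\sqrt{b}$ factor are $-2x^{3/2}\sqrt{b}/\sqrt{\pi}$ (coming from the $\Gamma(\frac{x}{b}+5/2)/\Gamma(\frac{x}{b}+1)$ term) and $+2x\sqrt{bx/\pi} = +2x^{3/2}\sqrt{b}/\sqrt{\pi}$ (coming from the cross term via the first moment). The key (and only delicate) observation is that these two $\OO_x(\sqrt{b})$ contributions cancel exactly, leaving $bx + \OO_x(b^{3/2})$.

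The main obstacle is purely bookkeeping: keeping the Stirling remainders at the right order so that the apparent $\OO_x(\sqrt{b})$ contribution in the second moment vanishes identically rather than merely approximately. Once the asymptotic ratio is written as $(x/b)^{a}[1 + \OO_x(b)]$ for the correct exponent $a \in \{1/2, 3/2\}$, the cancellation is automatic and no finer term in the Stirling expansion is required.
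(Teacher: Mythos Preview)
Your proposal is correct and follows exactly the route the paper intends: the corollary is stated without proof because it is obtained by substituting $(\alpha,\theta)=(b^{-1}x+1,b)$ into Lemma~\ref{lem:tech.var.G.prelim} and then applying the Stirling-type expansion of the Gamma ratios. Your identification of the exact cancellation of the two $\OO_x(b^{1/2})$ contributions in the second moment is the one nontrivial bookkeeping point, and you handle it correctly.
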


    The lemma below computes the first two moments for the minimum of two i.i.d.\ random variables with an inverse Gamma distribution.

    \begin{lemma}\label{lem:tech.var.IGam.prelim}
        Let $X, Y\stackrel{\mathrm{i.i.d.}}{\sim} \mathrm{InverseGamma}\hspace{0.2mm}(\alpha, \theta)$ and assume $\alpha > 2$, then
        \begin{equation}
            \EE[(\min\{X,Y\})^j] = \frac{\theta^{-j} \Gamma(\alpha - j)}{\Gamma(\alpha)} - \frac{j \theta^{-j}}{\sqrt{\pi}} \cdot \frac{\Gamma(\alpha - j) \Gamma(\alpha - 1/2)}{\Gamma^2(\alpha)}, \quad j\in \{1,2\},
        \end{equation}
        where $\Phi$ denotes the c.d.f.\ of the standard normal distribution.
        In particular, for all $x\in \R$,
        \vspace{-3mm}
        \begin{align}
            \EE[(\min\{X,Y\} - x)]
            &= \frac{\theta^{-1}}{\alpha - 1} \bigg[1 - \frac{1}{\sqrt{\pi}} \cdot \frac{\Gamma(\alpha - 1/2)}{\Gamma(\alpha)}\bigg] - x, \\[1mm]
            \EE[(\min\{X,Y\} - x)^2]
            &= \frac{\theta^{-2}}{(\alpha - 1) (\alpha - 2)} \bigg[1 - \frac{2}{\sqrt{\pi}} \cdot \frac{\Gamma(\alpha - 1/2)}{\Gamma(\alpha)}\bigg] \notag \\
            &\quad- 2 x \frac{\theta^{-1}}{\alpha - 1} \bigg[1 - \frac{1}{\sqrt{\pi}} \cdot \frac{\Gamma(\alpha - 1/2)}{\Gamma(\alpha)}\bigg] + x^2.
        \end{align}
    \end{lemma}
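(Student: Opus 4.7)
The plan is to mirror the argument of Lemma~\ref{lem:tech.var.G.prelim} closely, reducing the inverse Gamma problem to a Gamma-type double integral by a single substitution. Starting from
\[
\EE[(\min\{X,Y\})^j] = 2 \int_0^{\infty}\!\!\int_0^{\infty} y^j \, k_{\mathrm{IGam}}(x \nvert \alpha, \theta) \, k_{\mathrm{IGam}}(y \nvert \alpha, \theta) \, \ind_{[0,\infty)}(x-y) \, \rd x \, \rd y,
\]
I would apply the change of variables $u = 1/(x\theta)$ and $v = 1/(y\theta)$. Each inverse Gamma density element then collapses to a unit-scale Gamma$(\alpha,1)$ density element, the weight $y^j$ contributes a factor $\theta^{-j} v^{-j}$ (which lowers the $v$-exponent from $\alpha - 1$ to $\alpha - j - 1$ and is exactly why $\alpha > 2$ is needed to preserve integrability at $0$ when $j=2$), and $x \geq y$ becomes $v \geq u$. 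The integral rewrites as
\[
\EE[(\min\{X,Y\})^j] = \frac{2\,\theta^{-j}}{\Gamma^2(\alpha)} \int_0^{\infty}\!\!\int_0^{\infty} u^{\alpha-1} e^{-u}\, v^{\alpha-j-1} e^{-v}\, \ind_{[0,\infty)}(v-u)\,\rd u\,\rd v.
\]

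Next I would insert the integral representation $\ind_{[0,\infty)}(v - u) = \lim_{\e \to 0^+} (2\pi \ii)^{-1} \int_{-\infty}^{\infty} (\tau - \ii\e)^{-1} e^{\ii(v-u)\tau}\,\rd\tau$, interchange the order of integration, and evaluate the $u$ and $v$ integrals in closed form as $(1+\ii\tau)^{-\alpha}\Gamma(\alpha)$ and $(1-\ii\tau)^{j-\alpha}\Gamma(\alpha-j)$. Using $(1+\ii\tau)(1-\ii\tau) = 1+\tau^2$, the remaining integrand becomes $(1+\tau^2)^{-\alpha}(1-\ii\tau)^j/(\tau - \ii\e)$. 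I then apply the Sokhotski--Plemelj identity $\lim_{\e\to 0^+}(\tau - \ii\e)^{-1} = \mathrm{P.V.}\,\tau^{-1} + \ii\pi\,\delta(\tau)$: the delta contribution yields $1/2$, and after symmetrizing $\tau \leftrightarrow -\tau$ the principal value reduces to an integral of $(1+\tau^2)^{-\alpha}[(1-\ii\tau)^j - (1+\ii\tau)^j]/\tau$ over $(0,\infty)$. The critical algebraic fact (which is the analogue of the one that makes the Gamma case work, and fails beyond $j=2$) is that for $j \in \{1,2\}$ one has $(1-\ii\tau)^j - (1+\ii\tau)^j = -2j\,\ii\tau$, canceling the $\tau$ in the denominator and leaving $-(j/\pi)\int_0^{\infty} (1+\tau^2)^{-\alpha}\,\rd\tau$.

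This last integral equals $\sqrt{\pi}\,\Gamma(\alpha-1/2)/(2\Gamma(\alpha))$ by the Beta--Gamma identity (equivalently, a single application of Ramanujan's master theorem with $\varphi(z) = \Gamma(\alpha + z)/\Gamma(\alpha)$). Reassembling the two contributions gives
\[
\EE[(\min\{X,Y\})^j] = \frac{\theta^{-j}\Gamma(\alpha - j)}{\Gamma(\alpha)}\bigg[1 - \frac{j}{\sqrt{\pi}}\cdot\frac{\Gamma(\alpha - 1/2)}{\Gamma(\alpha)}\bigg],
\]
which is the stated formula. The two displayed moment expressions then follow routinely by linearly expanding $(\min\{X,Y\} - x)^j$, substituting $j \in \{1,2\}$ into the main formula, and simplifying with $\Gamma(\alpha-1)/\Gamma(\alpha) = 1/(\alpha - 1)$ and $\Gamma(\alpha-2)/\Gamma(\alpha) = 1/((\alpha-1)(\alpha-2))$.

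The main obstacle is largely bookkeeping: keeping signs and factors of $\ii$ consistent through the Heaviside and Sokhotski--Plemelj manipulation. The one substantive difference from the Gamma analogue is that the $v$-integrand carries the exponent $\alpha - j - 1$ (lowered by $j$) rather than $\alpha + j - 1$ (raised by $j$), which interchanges the roles of $(1 + \ii\tau)$ and $(1 - \ii\tau)$ in the complex integrand but leaves the pivotal $-2j\,\ii\tau$ cancellation intact.
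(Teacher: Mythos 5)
Your proposal is correct and follows essentially the same route as the paper: the identical change of variables $u=1/(x\theta)$, $v=1/(y\theta)$ reduces the problem to the Gamma-type double integral $\int_0^\infty\!\int_0^\infty u^{\alpha-1}e^{-u}v^{\alpha-j-1}e^{-v}\,\ind_{[0,\infty)}(v-u)\,\rd u\,\rd v$, and your Heaviside/Sokhotski--Plemelj evaluation of it is a sound (and correctly executed) re-derivation. The only difference is cosmetic: the paper avoids repeating the contour computation by setting $\widetilde{\alpha}=\alpha-j$ and observing that, after relabelling, this is exactly the integral already evaluated in the proof of Lemma~\ref{lem:tech.var.G.prelim}, whereas you carry the complex-analytic steps out inline, correctly noting that the roles of $(1+\ii\tau)$ and $(1-\ii\tau)$ are interchanged but the crucial cancellation $(1-\ii\tau)^j-(1+\ii\tau)^j=-2j\,\ii\tau$ for $j\in\{1,2\}$ survives.
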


    \begin{proof}
        Assume throughout the proof that $j\in \{1,2\}$.
        By the simple change of variables $(u,v) = (x^{-1}/\theta,y^{-1}/\theta)$ and the reparametrization $\widetilde{\alpha} \leqdef \alpha - j > 0$, we have
        \begin{align}
            \EE[(\min\{X,Y\})^j]
            &= 2 \int_0^{\infty} \int_0^{\infty} y^j \frac{(xy)^{-\alpha - 1} e^{-(x^{-1} + y^{-1})/\theta}}{\theta^{2\alpha} \Gamma^2(\alpha)} \, \ind_{[0,\infty)}(x - y) \rd x \rd y \notag \\
            &= \frac{2 \theta^{-j}}{\Gamma^2(\alpha)} \int_0^{\infty} \int_0^{\infty} u^{\alpha - 1} e^{-u} v^{\alpha - j - 1} e^{-v} \, \ind_{[0,\infty)}(v - u) \rd u \rd v \notag \\
            &= \frac{2 \theta^{-j}}{\Gamma^2(\alpha)} \int_0^{\infty} \int_0^{\infty} u^{\widetilde{\alpha} + j - 1} e^{-u} v^{\widetilde{\alpha} - 1} e^{-v} \, \ind_{[0,\infty)}(v - u) \rd u \rd v.
        \end{align}
        We already evaluated this double integral in the proof of Lemma~\ref{lem:tech.var.G.prelim} (with $\alpha$ instead of $\widetilde{\alpha}$).
        The above is
        \begin{equation}
            = \frac{2 \theta^{-j}}{\Gamma^2(\alpha)} \cdot \frac{\Gamma(\widetilde{\alpha})}{2} \left[\Gamma(\widetilde{\alpha} + j)- \frac{j}{2 \sqrt{\pi}} \cdot \Gamma(\widetilde{\alpha} + j - 1/2)\right].
        \end{equation}
        This ends the proof.
    \end{proof}

    \begin{corollary}\label{cor:tech.var.IGam}
        Let $X, Y\stackrel{\mathrm{i.i.d.}}{\sim} \mathrm{InverseGamma}\hspace{0.2mm}(b^{-1} + 1, x^{-1} b)$ for some $x\in (0,\infty)$ and $b\in (0,1)$, then
        \vspace{-1mm}
        \begin{align}
            \EE[(\min\{X,Y\} - x)]
            &= - \frac{x}{\sqrt{\pi}} \cdot \frac{\Gamma(b^{-1} + 1/2)}{\Gamma(b^{-1} + 1)} \notag \\[-0.5mm]
            &= - x \sqrt{\frac{b}{\pi}} + \OO_x(b^{3/2}), \\[1mm]
            \EE[(\min\{X,Y\} - x)^2]
            &= x^2 \bigg[\frac{1}{1 - b} - 1\bigg] - \frac{2 x^2}{\sqrt{\pi}} \frac{\Gamma(b^{-1} + 1/2)}{\Gamma(b^{-1} + 1)} \bigg[\frac{1}{1 - b} - 1\bigg] \notag \\[0.5mm]
            &= b x^2 + \OO_x(b^{3/2}).
        \end{align}
    \end{corollary}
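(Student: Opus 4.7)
The plan is to apply Lemma~\ref{lem:tech.var.IGam.prelim} directly with the specific parameter choice $(\alpha,\theta) = (b^{-1} + 1, x^{-1} b)$, and then extract the asymptotic expansions as $b \to 0$. First, note that for $b \in (0, 1/2)$ one has $\alpha = b^{-1} + 1 > 3 > 2$, so the hypothesis of Lemma~\ref{lem:tech.var.IGam.prelim} is satisfied and the exact identities for $\EE[\min\{X,Y\} - x]$ and $\EE[(\min\{X,Y\} - x)^2]$ are available.

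The first step is purely algebraic: substitute $\theta^{-1} = x/b$, $\alpha - 1 = b^{-1}$, and $\alpha - 2 = b^{-1} - 1$ into the two formulas of Lemma~\ref{lem:tech.var.IGam.prelim}. For the first moment, this yields $\frac{\theta^{-1}}{\alpha - 1} = x$, so the two terms $x$ from the bracket and $-x$ cancel, leaving precisely $-\frac{x}{\sqrt{\pi}} \cdot \frac{\Gamma(b^{-1} + 1/2)}{\Gamma(b^{-1} + 1)}$. For the second moment, the computation $\frac{\theta^{-2}}{(\alpha-1)(\alpha-2)} = \frac{x^2}{1 - b}$ combined with a careful collection of terms gives exactly the announced closed form with the factor $\frac{1}{1-b} - 1$ appearing in both pieces.

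The second (and only nontrivial) step is the asymptotic expansion as $b \to 0$. The factor $\frac{1}{1-b} - 1 = b + b^2 + \OO(b^3) = b + \OO(b^2)$ is elementary. The key estimate is the ratio of gamma functions $\Gamma(b^{-1} + 1/2)/\Gamma(b^{-1} + 1)$ as $b^{-1} \to \infty$; by the standard Tricomi--Erd\'elyi expansion (or directly from Stirling's formula), $\Gamma(z + a)/\Gamma(z + c) = z^{a-c}\bigl(1 + \OO(z^{-1})\bigr)$, so with $z = b^{-1}$, $a = 1/2$, $c = 1$, we obtain $\Gamma(b^{-1} + 1/2)/\Gamma(b^{-1} + 1) = b^{1/2}\bigl(1 + \OO(b)\bigr)$. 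Plugging this into the first-moment formula gives $-x\sqrt{b/\pi} + \OO_x(b^{3/2})$. Plugging it into the second-moment formula, the leading contribution is $x^2 \cdot (b + \OO(b^2)) = bx^2 + \OO(b^2)$, and the correction term is of order $b^{1/2} \cdot b = b^{3/2}$, so the total is $bx^2 + \OO_x(b^{3/2})$, as claimed.

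The main obstacle, though minor, is keeping track of the cancellation in the first moment (the two leading $x$ terms must cancel exactly) and verifying that the error from the gamma-ratio expansion is indeed absorbed into $\OO_x(b^{3/2})$ rather than a larger term; both are handled by the sharp form of the Tricomi--Erd\'elyi expansion, which provides an explicit $\OO(z^{-1})$ remainder.
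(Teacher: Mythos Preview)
Your proposal is correct and follows precisely the route the paper intends: the corollary is stated immediately after Lemma~\ref{lem:tech.var.IGam.prelim} without a separate proof, and your write-up fills in exactly the implicit substitutions $\theta^{-1}=x/b$, $\alpha-1=b^{-1}$, $(\alpha-1)(\alpha-2)=b^{-2}(1-b)$ together with the Stirling/Tricomi--Erd\'elyi expansion $\Gamma(b^{-1}+1/2)/\Gamma(b^{-1}+1)=b^{1/2}(1+\OO(b))$. One minor remark: the restriction to $b\in(0,1/2)$ is unnecessary, since $b\in(0,1)$ already gives $\alpha=b^{-1}+1>2$, matching the corollary's stated range.
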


    \newpage
    The lemma below computes the first two moments for the minimum of two i.i.d.\ random variables with a lognormal distribution.

    \begin{lemma}\label{lem:tech.var.LN.prelim}
        Let $X, Y\stackrel{\mathrm{i.i.d.}}{\sim} \mathrm{LogNormal}\hspace{0.2mm}(\mu,\sigma)$, then
        \begin{equation}
            \EE[(\min\{X,Y\})^a] = 2 e^{a \mu + \frac{(a \sigma)^2}{2}} \Phi\Big(- \frac{a \sigma}{\sqrt{2}}\Big), \quad a > 0,
        \end{equation}
        where $\Phi$ denotes the c.d.f.\ of the standard normal distribution.
        In particular, for all $x\in \R$,
        \begin{align*}
            &\EE[(\min\{X,Y\} - x)] = 2 e^{\mu + \frac{\sigma^2}{2}} \Phi\Big(- \frac{\sigma}{\sqrt{2}}\Big) - x, \\[1mm]
            &\EE[(\min\{X,Y\} - x)^2] = 2 e^{2 \mu + 2 \sigma^2} \Phi\big(- \sqrt{2} \, \sigma\big) - 4 x e^{\mu + \frac{\sigma^2}{2}} \Phi\Big(- \frac{\sigma}{\sqrt{2}}\Big) + x^2.
        \end{align*}
    \end{lemma}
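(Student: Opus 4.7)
The plan is to reduce the problem to the moment generating function of the minimum of two i.i.d.\ standard normals, and then evaluate that MGF via a ``complete the square'' argument followed by the identity $\EE[1 - \Phi(Z + c)] = \Phi(-c/\sqrt{2})$ for $Z \sim \mathcal{N}(0,1)$.

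First, I would write $X = e^{\sigma Z_1 + \mu}$ and $Y = e^{\sigma Z_2 + \mu}$ with $Z_1, Z_2 \stackrel{\mathrm{i.i.d.}}{\sim} \mathcal{N}(0,1)$. Since $\sigma > 0$ and $t \mapsto e^{\sigma t + \mu}$ is increasing, $\min\{X,Y\} = e^{\sigma M + \mu}$ where $M \leqdef \min\{Z_1, Z_2\}$. Hence
\begin{equation*}
    \EE[(\min\{X,Y\})^a] = e^{a\mu}\, \EE[e^{a\sigma M}],
\end{equation*}
so everything reduces to computing the MGF of $M$ at the point $a\sigma$.

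Next, I would use the well-known density $f_M(m) = 2 \phi(m)(1 - \Phi(m))$, where $\phi$ is the standard normal p.d.f., and rewrite
\begin{equation*}
    \EE[e^{a\sigma M}] = 2\int_{-\infty}^{\infty} e^{a\sigma m}\, \phi(m)\, (1 - \Phi(m))\, \rd m.
\end{equation*}
Completing the square via $e^{a\sigma m}\phi(m) = e^{(a\sigma)^2/2}\,\phi(m - a\sigma)$ and changing variable $m \mapsto m + a\sigma$ gives
\begin{equation*}
    \EE[e^{a\sigma M}] = 2 e^{(a\sigma)^2/2}\, \EE[1 - \Phi(Z + a\sigma)],
\end{equation*}
where $Z \sim \mathcal{N}(0,1)$. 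Then, introducing an independent copy $Z'$, I recognize $\EE[1 - \Phi(Z + a\sigma)] = \PP(Z' > Z + a\sigma) = \PP(Z' - Z > a\sigma)$; since $Z' - Z \sim \mathcal{N}(0,2)$, this probability equals $\Phi(-a\sigma/\sqrt{2})$. Combining the pieces yields the claimed formula $\EE[(\min\{X,Y\})^a] = 2 e^{a\mu + (a\sigma)^2/2}\, \Phi(-a\sigma/\sqrt{2})$.

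Finally, the two ``in particular'' identities follow by expanding
\begin{equation*}
    (\min\{X,Y\} - x)^k = \sum_{\ell=0}^{k} \binom{k}{\ell} (-x)^{k-\ell} (\min\{X,Y\})^{\ell}
\end{equation*}
for $k \in \{1,2\}$, taking expectations, and plugging in the general formula at $a = 1$ and $a = 2$. I do not anticipate any real obstacle here; the only subtle step is the MGF calculation above, and in particular the neat reduction of $\EE[1 - \Phi(Z + a\sigma)]$ to a probability on a Gaussian difference, which avoids evaluating any awkward integral of $\phi \cdot \Phi$ directly.
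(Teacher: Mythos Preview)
Your proof is correct. Both arguments reduce to standard normals and complete the square, but they organize the computation differently: the paper keeps the two-dimensional integral $2\int\int e^{a(\mu+\sigma y)}\,\frac{1}{2\pi}e^{-(x^2+y^2)/2}\,\ind_{\{x>y\}}\,\rd x\,\rd y$ and applies the $45^\circ$ rotation $(u,v)=\big((x-y)/\sqrt{2},(x+y)/\sqrt{2}\big)$ so that the integral factors into two one-dimensional Gaussian integrals, one of which gives $\Phi(-a\sigma/\sqrt{2})$ directly. You instead collapse to one dimension from the start via the density $f_M(m)=2\phi(m)(1-\Phi(m))$ of $M=\min\{Z_1,Z_2\}$, and then recover the same $\Phi(-a\sigma/\sqrt{2})$ by the probabilistic identity $\EE[1-\Phi(Z+c)]=\PP(Z'-Z>c)$ with $Z'-Z\sim\mathcal N(0,2)$. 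These are two faces of the same idea (the rotation is precisely what makes $Z_1-Z_2$ independent of $Z_1+Z_2$), but your version avoids writing down the two-dimensional integral and the Jacobian, at the cost of invoking the density of the minimum and an auxiliary independent normal. Either route is clean here; the ``in particular'' identities then follow exactly as you say by binomial expansion.
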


    \begin{proof}
        With the change of variables
        \begin{equation}
            \begin{pmatrix}
                u \\
                v
            \end{pmatrix}
            = \frac{1}{\sqrt{2}}
            \begin{pmatrix}
                1 & -1 \\
                1 & 1
            \end{pmatrix}
            \begin{pmatrix}
                x \\
                y
            \end{pmatrix}, \qquad \left|\frac{\rd (u,v)}{\rd (x,y)}\right| = 1,
        \end{equation}
        we have
        \begin{align}
            \EE[(\min\{X,Y\})^a]
            &= 2 \int_{-\infty}^{\infty} \int_y^{\infty} e^{a(\mu + \sigma y)} \frac{1}{2\pi} e^{-\frac{x^2 + y^2}{2}} \rd x \rd y \notag \\
            &= 2 \int_{-\infty}^{\infty} \int_0^{\infty} e^{a(\mu + \sigma \cdot \frac{-u + v}{\sqrt{2}})} \frac{1}{2\pi} e^{-\frac{u^2 + v^2}{2}} \rd u \rd v \notag \\
            &= 2 e^{a \mu + \frac{(a \sigma)^2}{2}} \int_{-\infty}^{\infty} \int_0^{\infty} \frac{1}{\sqrt{2\pi}} e^{-\frac{(u + \frac{a \sigma}{\sqrt{2}})^2}{2}} \frac{1}{\sqrt{2\pi}} e^{-\frac{(v - \frac{a \sigma}{\sqrt{2}})^2}{2}} \rd u \rd v \notag \\
            &= 2 e^{a \mu + \frac{(a \sigma)^2}{2}} \Phi\Big(- \frac{a \sigma}{\sqrt{2}}\Big).
        \end{align}
        This ends the proof.
    \end{proof}

    \begin{corollary}\label{cor:tech.var.LN}
        Let $X, Y\stackrel{\mathrm{i.i.d.}}{\sim} \mathrm{LogNormal}\hspace{0.2mm}(\log x, \sqrt{b})$ for some $x,b\in (0,\infty)$, then
        \begin{align*}
            \EE[(\min\{X,Y\} - x)]
            &= x \Big[2 e^{b / 2} \Phi\Big(- \sqrt{\frac{b}{2}}\Big) - 1\Big] \notag \\
            &= - x \sqrt{\frac{b}{\pi}} + \frac{b x}{2} + \OO_x(b^{3/2}), \\
            \EE[(\min\{X,Y\} - x)^2]
            &= x^2 \Big[2 e^{2 b} \Phi\big(- \sqrt{2 b}\big) - 4 e^{b / 2} \Phi\Big(- \sqrt{\frac{b}{2}}\Big) + 1\Big] \notag \\
            &= b x^2 + \OO_x(b^{3/2}).
        \end{align*}
    \end{corollary}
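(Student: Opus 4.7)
The plan is to derive both identities by direct substitution into Lemma~\ref{lem:tech.var.LN.prelim}, after which the asymptotic forms follow from routine Taylor expansions of $e^{(\cdot)}$ and $\Phi(-\cdot)$ near $0$.

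First, I would apply Lemma~\ref{lem:tech.var.LN.prelim} with $(\mu,\sigma) = (\log x, \sqrt{b})$, taking $a = 1$ to obtain $\EE[\min\{X,Y\}] = 2 x\, e^{b/2} \Phi(-\sqrt{b/2})$ and $a = 2$ to obtain $\EE[(\min\{X,Y\})^2] = 2 x^2 e^{2b} \Phi(-\sqrt{2b})$. Combined with the trivial identities $\EE[\min\{X,Y\} - x] = \EE[\min\{X,Y\}] - x$ and $\EE[(\min\{X,Y\} - x)^2] = \EE[(\min\{X,Y\})^2] - 2 x \EE[\min\{X,Y\}] + x^2$, these immediately yield the first (closed-form) equality on each line of the statement.

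Next, to obtain the asymptotic forms as $b\to 0$, I would Taylor expand $e^{b/2} = 1 + b/2 + \OO(b^2)$, $e^{2b} = 1 + 2b + \OO(b^2)$, and $\Phi(-t) = 1/2 - t/\sqrt{2\pi} + \OO(t^3)$ as $t\to 0^+$ (the quadratic term vanishes since $\Phi''(0) = 0$). Setting $t = \sqrt{b/2}$ and $t = \sqrt{2b}$ produces expansions in half-integer powers of $b$, and multiplying the two series yields $2 e^{b/2} \Phi(-\sqrt{b/2}) = 1 - \sqrt{b/\pi} + b/2 + \OO(b^{3/2})$. Subtracting $1$ and multiplying by $x$ gives the first asymptotic expression.

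The one place where care is needed is the second-moment expansion: in $2 e^{2b} \Phi(-\sqrt{2b}) - 4 e^{b/2}\Phi(-\sqrt{b/2}) + 1$, the two constant contributions must combine with the $+1$ to cancel, and the two $\sqrt{b}$ contributions must also cancel exactly. Only then does the leading behaviour reduce to the order-$b$ term, which arises from the mismatch between the two exponential expansions. This bookkeeping is the main (minor) obstacle; once it is carried out, multiplying by $x^2$ yields $b x^2 + \OO_x(b^{3/2})$, as claimed.
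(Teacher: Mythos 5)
Your proposal is correct and matches the paper's (implicit) derivation: the corollary is stated without proof precisely because it follows by substituting $(\mu,\sigma)=(\log x,\sqrt{b})$ into Lemma~\ref{lem:tech.var.LN.prelim} with $a=1,2$ and Taylor expanding $e^{b/2}$, $e^{2b}$ and $\Phi(-t)=\tfrac12-t/\sqrt{2\pi}+\OO(t^3)$, exactly as you describe. Your bookkeeping for the second moment (cancellation of the constant and $\sqrt{b}$ terms, leaving the order-$b$ term) checks out.
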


\section{R code}\label{sec:R.code}

    The \texttt{R} code for the simulations in Section~\ref{sec:numerical.study} is available online.

\end{appendices}

\section*{Acknowledgments}

F.\ Ouimet is supported by a postdoctoral fellowship from the NSERC (PDF) and the FRQNT (B3X supplement).
We thank Benedikt Funke for reminding us of the representation $\min\{T_1,T_2\} = \frac{1}{2} (T_1 + T_2) - \frac{1}{2} |T_1 - T_2|$, which helped tightening up the MSE and MISE results in Section~\ref{sec:IGau.kernel.results} and Section~\ref{sec:RIG.kernel.results}.
This research includes computations using the computational cluster Katana supported by Research Technology Services at UNSW Sydney.

%
%

\phantomsection
\addcontentsline{toc}{chapter}{References}

\bibliographystyle{authordate1}
\bibliography{Lafaye_Ouimet_2020_cdf_estimation_bib}

\typeout{get arXiv to do 4 passes: Label(s) may have changed. Rerun}
\end{document}